
\documentclass{daj}
\usepackage{mathtools}
\usepackage{amsthm}
\usepackage{amsfonts}
\usepackage{thmtools}
\usepackage{amssymb}
\usepackage{enumitem}

\declaretheorem[numberlike=equation]{conjecture}
\declaretheorem[name=Conjecture,numbered=no]{conjecture*}
\declaretheorem[numberlike=equation]{theorem}

\declaretheorem[name=Theorem,numbered=no]{theorem*}
\declaretheorem[numberlike=equation,style=defstyle]{definition}
\declaretheorem[unnumbered,name=Definition,style=defstyle]{definition*}

\declaretheorem[name=Lemma,numbered=no]{lemma*}

\declaretheorem[numberlike=equation]{corollary}
\declaretheorem[name=Corollary,numbered=no]{corollary*}

\declaretheorem[name=Proposition,numbered=no]{proposition*}

\declaretheorem[numberlike=equation]{claim}
\declaretheorem[name=Claim,numbered=no]{claim*}

\declaretheorem[numberlike=equation,style=defstyle]{remark}
\declaretheorem[unnumbered,name=Remark,style=defstyle]{remark*}

	\renewcommand{\vec}[1]{{\mathbf{#1}}}

	\makeatletter
	\newcommand{\va}{{\vec{a}}\@ifnextchar{^}{\!\:}{}}
	\newcommand{\vb}{{\vec{b}}\@ifnextchar{^}{\!\:}{}}
	\newcommand{\vc}{{\vec{c}}\@ifnextchar{^}{\!\:}{}}
	\newcommand{\vd}{{\vec{d}}\@ifnextchar{^}{\!\:}{}}
	\newcommand{\ve}{{\vec{e}}\@ifnextchar{^}{\!\:}{}}
	\newcommand{\vy}{{\vec{y}}\@ifnextchar{^}{\!\:}{}}
	\newcommand{\vs}{{\vec{s}}\@ifnextchar{^}{\!\:}{}}
	\newcommand{\vt}{{\vec{t}}\@ifnextchar{^}{\!\:}{}}
	\newcommand{\vx}{{\vec{x}}\@ifnextchar{^}{}{}}		
	\newcommand{\vz}{{\vec{z}}\@ifnextchar{^}{\!\:}{}}
	\newcommand{\vv}{{\vec{v}}\@ifnextchar{^}{\!\:}{}}
	\newcommand{\vu}{{\vec{u}}\@ifnextchar{^}{\!\:}{}}
	\newcommand{\vf}{{\vec{f}}\@ifnextchar{^}{\!\:}{}}
	\newcommand{\vg}{{\vec{g}}\@ifnextchar{^}{\!\:}{}}
	\newcommand{\vr}{{\vec{r}}\@ifnextchar{^}{\!\:}{}}
	\newcommand{\vw}{{\vec{w}}\@ifnextchar{^}{\!\:}{}}

	\newcommand{\vY}{{\vec{Y}}\@ifnextchar{^}{\!\:}{}}
	\newcommand{\vX}{{\vec{X}}\@ifnextchar{^}{}{}}		
	\newcommand{\vZ}{{\vec{Z}}\@ifnextchar{^}{\!\:}{}}
	\newcommand{\vG}{{\vec{G}}\@ifnextchar{^}{\!\:}{}}
	
	\makeatother


\newcommand{\C}{\mathbb{C}}
\newcommand{\F}{\mathbb{F}}

\newcommand{\cF}{{\mathcal{F}}}
\newcommand{\cI}{{\mathcal{I}}}
\newcommand{\cJ}{{\mathcal{J}}}
\newcommand{\cK}{{\mathcal{K}}}
\newcommand{\cL}{{\mathcal{L}}}
\newcommand{\cQ}{{\mathcal{Q}}}

\newcommand{\cT}{{\mathcal{T}}}

\newcommand{\spn}{\mathrm{span}}
\newcommand{\rank}{\mathrm{rank}}
\newcommand{\res}{\mathrm{Res}}
\newcommand{\eqdef}{\vcentcolon=}

\dajAUTHORdetails{%
  title = {Sylvester-Gallai type theorems for quadratic polynomials}, 
  author = {Amir Shpilka},
  plaintextauthor = {Amir Shpilka},
    %
    %
    %
    %
    %
   %
  keywords = {Sylvester-Gallai theorem, quadratic polynomials, polynomial identity testing},
}   

\dajEDITORdetails{%
   year={2020},
   number={13},
   received={26 June 2019},   
   revised={2 April 2020},    
   published={12 August 2020},  
   doi={10.19086/da.14492},       
}   

\begin{document}

\begin{frontmatter}[classification=text]

\title{Sylvester-Gallai type theorems for quadratic polynomials} 

\author[pgom]{Amir Shpilka
\thanks{The research leading to these results has received funding from the  Israel Science Foundation (grant number 552/16) and from the Len Blavatnik and the Blavatnik Family foundation}}

\begin{abstract}
We prove Sylvester-Gallai type theorems for quadratic polynomials. Specifically, we prove that if a finite collection $\cQ$, of irreducible polynomials of degree at most $2$, satisfy that for every two polynomials $Q_1,Q_2\in \cQ$ there is a third polynomial $Q_3\in\cQ$ so that whenever $Q_1$ and $Q_2$ vanish then also $Q_3$ vanishes, then the linear span of the polynomials in  $\cQ$ has dimension $O(1)$. We also prove a colored version of the theorem: If three finite sets of quadratic polynomials satisfy that for every two polynomials from distinct sets there is a polynomial in the third set satisfying the same vanishing condition then all polynomials are contained in an $O(1)$-dimensional space. 

This answers affirmatively two conjectures of Gupta \cite{Gupta14} that were raised in the context of solving certain depth-$4$ polynomial identities.

To obtain our main theorems we prove a new result classifying the possible ways that a quadratic polynomial $Q$ can vanish when two other quadratic polynomials vanish. Our proofs also require robust versions of a theorem of Edelstein and Kelly (that extends the Sylvester-Gallai theorem to colored sets).
\end{abstract}
\end{frontmatter}


\section{Introduction}\label{sec:intro}
The Sylvester-Gallai theorem asserts that if a finite set of points has the property that every line passing through any two points in the set also contains a third point in the set then all the points in the set are colinear. Many variants of this theorem were studied: extensions to higher dimensions, colored versions, robust versions and many more. For a survey on the Sylvester-Gallai theorem and its variants see \cite{BorwenMoser90}. One specific extension that is relevant to our work is the following colored version that was obtained by Edelstein and Kelly: If three finite sets of points satisfy that every line passing through points from two different sets also contains a point from the third set, then, in this case too all the points belong to a low dimensional space. 
 
\sloppy Another extension of the theorem that is relevant to our work was proved in \cite{barak2013fractional,DSW12}. There the authors proved the following robust version of the Sylvester-Gallai theorem (along with other robust versions of similar theorems): if a finite set of points satisfies that for every point $p$ in the set there is a $\delta$ fraction of other points so that for each of them, the line passing through it and $p$, spans a third point in the set, then the set is contained in an $O(1/\delta)$-dimensional space. 

While these theorems may seem unrelated to computation at first sight they have important consequences for locally decodable and locally correctable codes \cite{barak2013fractional,DSW12}, for reconstruction of certain depth-$3$ circuits \cite{DBLP:journals/siamcomp/Shpilka09,DBLP:conf/coco/KarninS09,DBLP:conf/coco/Sinha16} and for the polynomial identity testing (PIT for short) problem, which we describe next. 

The PIT problem asks to give a deterministic algorithm that given arithmetic circuit as input determines whether it computes the identically zero polynomial. This is a fundamental problem in theoretical computer science that has attracted a lot of attention both because of its intrinsic importance, its relation to other derandomization problems \cite{DBLP:journals/cc/KoppartySS15,Mulmuley-GCT-V,DBLP:conf/approx/ForbesS13,DBLP:journals/cacm/FennerGT19,DBLP:conf/stoc/GurjarT17,DBLP:conf/focs/SvenssonT17} and its connections to lower bounds for arithmetic circuits \cite{HeintzSchnorr80,DBLP:conf/fsttcs/Agrawal05,DBLP:journals/cc/KabanetsI04,DBLP:journals/siamcomp/DvirSY09,DBLP:journals/toc/ForbesSV18,DBLP:conf/coco/ChouKS18}. For more on the PIT problem see  \cite{SY10,Saxena09,Saxena14,ForbesThesis}. 

The case most relevant to Sylvester-Gallai type theorems is when the input circuit is a depth-$3$ circuit with small top fan-in. Specifically, a homogeneous $\Sigma^{[k]}\Pi^{[d]}\Sigma$ circuit in $n$ variables computes a polynomial of the following form
\begin{equation}\label{eq:sps}
\Phi(x_1,\ldots,x_n) = \sum_{i=1}^{k}\prod_{j=1}^{d}\ell_{i,j}(x_1,\ldots,x_n)\;,
\end{equation}
where each $\ell_{i,j}$ is a linear form.
Consider the PIT problem for $\Sigma^{[3]}\Pi^{[d]}\Sigma$ circuits. I.e., $\Phi$ is given as in \autoref{eq:sps} and it has $3$ multiplication gates, i.e. $k=3$. If $\Phi$ computes the zero polynomial then we have, for every $j,j'\in[d]$, that
$$\prod_{i=1}^{d}\ell_{1,i} \equiv 0 \mod \ell_{2,j},\ell_{3,j'}\;.$$
As the zero set of two linear functions is an irreducible variety, we get as a consequence that for every $j,j'\in[d]$, the linear functions $\ell_{2,j}$ and $\ell_{3,j'}$ span a linear function in $\{\ell_{1,1},\ldots,\ell_{1,d}\}$. In other words, the three sets $\cT_i = \{\ell_{i,1},\ldots,\ell_{i,d}\}$, for $i\in\{1,2,3\}$, satisfy the conditions of the Edelstein-Kelly theorem described above,\footnote{The theorem speaks about line through points rather than span of vectors, but it is not hard to see how to translate the Edelstein-Kelly theorem to this setting as well. See \autoref{rem:span-pass}.} and hence span a low dimensional space. Thus, if $\Phi\equiv 0$ then we can rewrite the expression for $\Phi$ using only constantly many variables (after a suitable invertible linear transformation). This allows efficient PIT algorithms for such $\Sigma^{[3]}\Pi^{[d]}\Sigma$ circuits. The case of more than $3$ multiplication gates is more complicated and satisfies a similar higher dimensional condition. This rank-bound approach for PIT of $\Sigma\Pi\Sigma$ circuits was raised in \cite{DBLP:journals/siamcomp/DvirS07} and later carried out in \cite{DBLP:conf/focs/KayalS09,DBLP:journals/jacm/SaxenaS13}.\footnote{The best algorithm for PIT of $\Sigma^{[k]}\Pi^{[d]}\Sigma$  circuits was obtained through a different, yet related, approach in \cite{DBLP:journals/siamcomp/SaxenaS12}.}

While such rank-bounds found important applications in studying PIT of depth-$3$ circuits, it seemed that such an approach cannot work for depth-$4$ $\Sigma\Pi\Sigma\Pi$ circuits,\footnote{Though we note that for multilinear $\Sigma\Pi\Sigma\Pi$ circuits Saraf and Volkovich obtained an analogous bound on the sparsity of the polynomials computed by the multiplication gates in a zero circuit \cite{DBLP:journals/combinatorica/SarafV18}.} even in the simplest case where there are only $3$ multiplication gates and the bottom fan-in is two, i.e., for homogeneous  $\Sigma^{[3]}\Pi^{[d]}\Sigma\Pi^{[2]}$ circuits that compute polynomials of the form
\begin{equation}\label{eq:spsp}
\Phi(x_1,\ldots,x_n) = \prod_{j=1}^{d}Q_{1,j}(x_1,\ldots,x_n)+\prod_{j=1}^{d}Q_{2,j}(x_1,\ldots,x_n)+\prod_{j=1}^{d}Q_{3,j}(x_1,\ldots,x_n)\;,
\end{equation}
where each $Q_{i.j}$ is a homogeneous quadratic polynomial. Indeed, we if try to reason as before then we get 
\begin{eqnarray}\label{eq:quad-pit}
\prod_{j=1}^{d}Q_{1,j}(x_1,\ldots,x_n) = 0 \mod Q_{2,j},Q_{3,j'}.
\end{eqnarray}
However, unlike the linear case it is not clear what  can be concluded now. Indeed, if a product of linear functions vanishes modulo two linear functions, then we know that one function in the product must be in the linear span of those two linear functions. For quadratic polynomials this is not necessarily the case. For example, note that if for a quadratic $Q$ we have that $Q=0$ and $Q + x^2=0$ then also $Q+xy=0$, and, clearly, we can find $Q$ such that $Q+xy$ is not spanned by $Q$ and $Q+x^2$. An even more problematic difference is that it may be the case that \autoref{eq:quad-pit} holds but that no $Q_{1,j}$ always vanishes when, say, $Q_{2,1},Q_{3,1}$ vanish. For example, let 
$$Q_1 = xy+zw \quad,\quad Q_2 = xy-zw \quad,\quad Q_3 = xw \quad,\quad Q_4 = yz.$$
Then, it is not hard to verify that 
$$Q_3 \cdot Q_4 \equiv 0 \mod Q_1,Q_2.$$
but neither $Q_3$ nor $Q_4$ vanish identically modulo $Q_1,Q_2$.
Thus, the PIT problem for sums of products of quadratics seem much harder than the corresponding problem for depth-$3$ circuits. Indeed, currently no efficient deterministic PIT algorithm is known for $\Sigma^{[3]}\Pi^{[d]}\Sigma\Pi^{[2]}$ circuits.

In spite of the above, Beecken et al.  \cite{DBLP:journals/iandc/BeeckenMS13,Gupta14} and Gupta \cite{Gupta14} conjectured that perhaps the difference between the quadratic case and the linear case is not so dramatic. In fact, they suggested that this may be the case for any constant degree and not just for quadratics. Specifically, Gupta observed that whenever \autoref{eq:quad-pit} holds it must be the case that there are four
polynomials in $\{Q_{1,j}\}$ whose product vanishes identically. That is, for every $(j,j')\in[d]^2$ there are $i_{1,j,j'}, i_{2,j,j'},i_{3,j,j'}, i_{4,j,j'}\in [d]$ so that 
$$Q_{1,i_{1,j,j'}}\cdot Q_{1,i_{2,j,j'}} \cdot Q_{1,i_{3,j,j'}}\cdot Q_{1,i_{4,j,j'}} \equiv 0 \mod Q_{2,j},Q_{3,j'}.$$
Gupta then raised the conjecture that whenever this holds for every $j,j'$ and for every two of the multiplication gates, then it must be the case that the \emph{algebraic} rank of the set $\{Q_{i,j}\}$ is $O(1)$. More generally, Gupta conjectured that this is the case for any fixed number of sets.

\begin{conjecture}[Conjecture 1 in \cite{Gupta14}]\label{con:gupta-general}
Let $\cF_1,\ldots, \cF_k$ be finite sets of irreducible homogenous polynomials in $\C[x_1,\ldots, x_n]$ of degree $\leq r$ such that $\cap_i \cF_i = \emptyset$ and for every $k-1$ polynomials $Q_1,\ldots,Q_{k-1}$, each from a distinct set, there are $P_1,\ldots,P_c$ in the remaining set such that  whenever $Q_1,\ldots,Q_{k-1}$ vanish then also the product $\prod_{i=1}^{c}P_i$ vanishes. Then, $\text{trdeg}_\C(\cup_i \cF_i) \leq \lambda(k, r, c)$ for some function $\lambda$, where trdeg stands for the transcendental degree (which is the same as algebraic rank).
\end{conjecture}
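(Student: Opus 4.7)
My plan is to prove the conjecture by induction on the degree bound $r$, with the number of sets $k$ treated as an inner parameter. The base case $r=1$ reduces to rank bounds for configurations of irreducible linear forms satisfying a multi-colored Sylvester--Gallai-type condition with a product on one side; for $c=1$ this follows from the Edelstein--Kelly theorem and its extensions to more colors, while for general $c$ and $k$ one can appeal to the rank bounds for $\Sigma^{[k]}\Pi\Sigma$ circuit identities developed in \cite{DBLP:journals/jacm/SaxenaS13,DBLP:journals/siamcomp/SaxenaS12}, which encode exactly this combinatorial pattern. The real content is therefore the inductive step on $r$.

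First I would develop a structural classification generalizing the $r=2$ theorem announced in the abstract. Let $I=(Q_1,\ldots,Q_{k-1})$; since $\prod_i P_i$ vanishes on $V(I)$, the primary decomposition $V(I)=\bigcup_\alpha W_\alpha$ forces every component $W_\alpha$ to lie in some $V(P_i)$, and by irreducibility of $P_i$ together with the Nullstellensatz, $P_i$ is the (essentially unique) defining polynomial of $W_\alpha$ when $W_\alpha$ is a hypersurface, and otherwise $P_i$ lies in the radical of the ideal of $W_\alpha$. The aim of the classification is to show that each such $P_i$ either (i) is a $\C[x_1,\ldots,x_n]_{<r}$-linear combination of $Q_1,\ldots,Q_{k-1}$ (so $P_i$ is essentially in their span), or (ii) shares with some $Q_j$ a nontrivial common polynomial factor of positive degree, which can be stripped off to reduce the degree of the configuration. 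Proving this dichotomy requires controlling components of nearly complete-intersection ideals via Bezout and Hilbert-series arguments, and is the most substantive new ingredient.

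Given such a classification, the inductive step runs as follows. Replace the product condition by a choice function: for each admissible $(k-1)$-tuple $(Q_1,\ldots,Q_{k-1})$, the hypothesis yields an irreducible $P$ in the remaining set together with a component $W$ of $V(Q_1,\ldots,Q_{k-1})$ with $W\subseteq V(P)$. Alternative (i) of the classification then lets us regard the top-degree part of each $P$ as a point in a large linear configuration and apply a robust, colored Sylvester--Gallai theorem of the type proved in \cite{barak2013fractional,DSW12}, extended to $k$ colors, to bound the $\C$-linear rank of those top-degree parts; since the algebraic rank of a homogeneous family is at most its linear rank, this controls the transcendence degree of the ``span'' piece. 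Alternative (ii) lets us factor out a common divisor, producing a family of strictly smaller degree to which the induction hypothesis applies. Combining the two pieces yields the required bound $\lambda(k,r,c)$.

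I expect the main obstacle to be the structural classification in degree $r\geq 3$: already for quadratics the analogous statement is the technical heart of the present paper, and for higher degree the ideals $I$ admit embedded components, high multiplicities, and components of varying codimension, so the clean ``in the span or shares a factor'' dichotomy may need to be replaced by a longer list of degenerate configurations. A secondary difficulty is upgrading the robust, colored Sylvester--Gallai theorems to arbitrary $k$ and $c$ with quantitative bounds that remain finite (and ideally polynomial) in $k,r,c$; this likely requires strengthening the incidence-geometric inputs of \cite{barak2013fractional,DSW12} in directions that are not currently known even over $\mathbb{R}$.
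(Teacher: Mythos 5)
This statement is \autoref{con:gupta-general}, which the paper does \emph{not} prove: it is an open conjecture of Gupta, and the author explicitly notes that even the results of this paper ``do not imply \autoref{con:gupta-general}, even for the case of $k=3$ and $r=2$.'' What the paper actually establishes are the much weaker Conjectures~\ref{con:gupta-sg} and \ref{con:gupta-ek} for $r=2$ only. Your text is accordingly a research programme rather than a proof, and you concede as much in your last paragraph; but beyond the admitted missing ingredients there is a concrete error in the plan. Your proposed dichotomy --- each $P_i$ is either (i) essentially in the span of $Q_1,\ldots,Q_{k-1}$ or (ii) shares a nontrivial factor with some $Q_j$ --- is already false for $r=2$, $k=3$, $c=1$. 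The paper's structure theorem (\autoref{thm:structure-intro}) exhibits two further genuinely distinct possibilities: $\alpha Q_1+\beta Q_2=\ell^2$ for a linear $\ell$, and the case where $Q,Q_1,Q_2$ all vanish on a common codimension-$2$ linear subspace; in both, the irreducible polynomials involved need share no factor and need not span one another (the introduction's example $Q_1=xy+zw$, $Q_2=xy-zw$, $Q_3=xw$, $Q_4=yz$ illustrates how badly the naive dichotomy fails once products are allowed). Since these extra cases are exactly what forces the elaborate case analysis in Sections~\ref{sec:quad-SG} and \ref{sec:quad-EK}, an induction built on your two-case classification collapses at its first step.

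Two further gaps are load-bearing. First, the degree-reduction in your alternative (ii) is unjustified: stripping a common factor from $P_i$ and $Q_j$ destroys irreducibility and there is no argument that the resulting lower-degree family inherits the radical-membership hypothesis, so the induction on $r$ has no valid recursive call. Second, the quantitative inputs you invoke --- a robust, $k$-colored Sylvester--Gallai theorem for general $k$ and a structural classification of $\prod_i P_i\in\sqrt{(Q_1,\ldots,Q_{k-1})}$ for degree $r\ge 3$ (indeed even for $r=2$ with $c>1$, which required separate work by Peleg for $k=3$) --- do not currently exist. As it stands the proposal neither matches anything in the paper nor constitutes a proof of the conjecture.
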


The condition in the conjecture can be stated equivalently as $$\prod_{i=1}^{c}P_i \in \sqrt{(Q_1,\ldots,Q_{k-1})},$$ where the object on the right hand side is the radical of the ideal generated by $\{Q_i\}_{i=1}^{k-1}$ (see \autoref{sec:res}). Note that for $r=1$ we have also $c=1$ and by the Edelstein-Kelly theorem $\lambda$ is $\leq 2$ in this case (and we can replace algebraic rank with linear rank).

\sloppy In \cite{DBLP:journals/iandc/BeeckenMS13} Beecken et al. conjectured that  the algebraic rank of simple and minimal $\Sigma^{[k]}\Pi^{[d]}\Sigma\Pi^{[r]}$ circuits (see their paper for definition of simple and minimal) is $O_k(\log d)$. We note that this conjecture is weaker than Gupta's as every zero $\Sigma^{[k]}\Pi^{[d]}\Sigma\Pi^{[r]}$ circuit gives rise to a structure satisfying the conditions of Gupta's conjecture, but the other direction is not necessarily true.
Beecken et al. also showed how to obtain a deterministic PIT for $\Sigma^{[k]}\Pi^{[d]}\Sigma\Pi^{[r]}$ circuits assuming the correctness of their conjecture. 



As an approach towards solving \autoref{con:gupta-general} Gupta set up a collection of conjectures, each of which is a natural extension of a known Sylvester-Gallai type theorem for the case of higher degree polynomials. 
The first conjecture is a direct analog of the Sylvester-Gallai theorem where we replace the requirement that a line through two points contains a third with a more algebraic condition: that for every two polynomials there is a third one so that whenever the two polynomials vanish then also the third vanishes.

\begin{conjecture}[Conjecture 2 of \cite{Gupta14}]\label{con:gupta-sg}
Let $Q_1,\ldots,Q_m \in \C[x_1,\ldots, x_n]$ be irreducible and homogenous polynomials of degree $\leq r$ such that for every pair of distinct $Q_i,Q_j$ there is a distinct $Q_k$ so that  whenever $Q_i$ and $Q_j$ vanish then so does $Q_k$. Then $\text{trdeg}_\C(Q_1,\ldots, Q_m) \leq \lambda(r)$.
\end{conjecture}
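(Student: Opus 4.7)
The plan is to proceed by induction on $r$, with the base case $r=1$ handled by the Sylvester--Gallai theorem (which yields a constant bound since for linear forms algebraic and linear rank coincide), and to push the induction through via a structural classification of how one irreducible polynomial of degree $\le r$ can be forced to vanish on the common zero set of two others. I first reformulate the hypothesis ideal-theoretically: by the Nullstellensatz, ``$Q_k$ vanishes wherever $Q_i,Q_j$ do'' is equivalent to $Q_k \in \sqrt{(Q_i,Q_j)}$. The combinatorial datum is then a triple system: for each unordered pair $\{i,j\}$ we record a witness $k$ with $Q_k \in \sqrt{(Q_i,Q_j)}$.

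The heart of the argument is a \emph{structural trichotomy} at degree $r$: whenever $P,Q,R$ are distinct irreducible homogeneous polynomials of degree $\le r$ with $R \in \sqrt{(P,Q)}$, one of the following holds: (i) $R$ lies in the $\C$-linear span of $P$, $Q$, and products of polynomials drawn from a controlled ``lower-degree residue'' set $\cL_{P,Q}\subset\C[x_1,\dots,x_n]_{<r}$ determined by $P$ and $Q$; or (ii) all three of $P,Q,R$ depend, after a common linear change of coordinates, on only $O_r(1)$ variables. The $r=2$ instance of this classification is precisely the ``new result'' highlighted in the abstract, and must be iterated: the sets $\cL_{P,Q}$ produced by applying it at degree $r$ are bounded-degree objects to which the inductive hypothesis at degree $\le r-1$ applies. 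I would build the trichotomy via resultant theory (the resultant $\mathrm{Res}_{x_n}(P,Q)$ has degree $O_r(1)$ and cuts out the projection of $V(P,Q)$) combined with a primary-decomposition analysis of $\sqrt{(P,Q)}$.

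Once the trichotomy is in place, I split the $Q_i$ according to which clause is invoked for a positive fraction of the pairs through each $Q_i$. For those $Q_i$ governed by clause (ii), the transcendence degree of that subfamily is $O_r(1)$ by construction. For those governed by clause (i), the lower-degree pieces $\cL_{P,Q}$ inherit a (possibly colored or fractional) Sylvester--Gallai incidence structure, so the inductive hypothesis bounds their transcendence degree; meanwhile the linear-span structure upgrades, through the robust Edelstein--Kelly and Sylvester--Gallai theorems of \cite{barak2013fractional,DSW12}, to a bound on the $\C$-linear dimension of the top-degree components modulo the lower-degree residues. Since $\mathrm{trdeg}_\C \le \dim_\C \mathrm{span}$, combining the two bounds yields $\mathrm{trdeg}_\C(Q_1,\dots,Q_m) \le \lambda(r)$.

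The main obstacle is proving the trichotomy at general $r$. For $r=2$ one can exploit the simultaneous diagonalization of pencils of symmetric matrices and the explicit geometry of quadric intersections (rulings, lines on a quadric) to classify radical membership. For $r \ge 3$ the variety $V(P,Q)$ is a genuine codimension-$2$ object whose defining ideal need not be generated in low degree; extracting a clean ``algebraic-dependence versus low-dimensionality'' dichotomy appears to require substantially new commutative-algebraic input, and the bound $\lambda(r)$ (and even its finiteness for $r\ge 3$) hinges entirely on this step.
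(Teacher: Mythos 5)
Your proposal is not a proof: the statement in question is a conjecture that the paper itself resolves only for $r=2$ (that is the content of \autoref{thm:main-sg-intro}); for $r\ge 3$ it remains open. The entire weight of your argument rests on the ``structural trichotomy'' at degree $r$, and you concede in your final paragraph that you cannot establish it beyond $r=2$ and that the finiteness of $\lambda(r)$ hinges on it. That concession is fatal: without the classification of radical membership $R\in\sqrt{(P,Q)}$ for higher-degree irreducibles, the induction never gets started, and no substitute is known. What you have written is a reasonable research plan for the open case together with a sketch of the known case, not a proof of the conjecture.

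Even restricted to $r=2$, where your outline does track the paper's actual strategy (a resultant-based structure theorem, \autoref{thm:structure-intro}, followed by a case split handled with the robust Sylvester--Gallai and Edelstein--Kelly theorems of \cite{barak2013fractional,DSW12}), your clause (ii) misstates the third branch of the trichotomy. The correct statement is that $Q,Q_1,Q_2$ all lie in $\sqrt{(b_1,b_2)}$ for two linear forms $b_1,b_2$, i.e.\ they vanish on a common codimension-$2$ linear subspace; it is \emph{not} true that they then depend on $O(1)$ variables after a common linear change of coordinates (take $Q=b_1a_1+b_2a_2$ with $a_1,a_2$ arbitrary). If clause (ii) as stated were available, that case would be nearly trivial; instead the paper must do genuine work there (\autoref{cla:3}: project a minimal representation of $Q$ onto random multiples of a fresh variable $z$, show the resulting linear forms themselves satisfy a Sylvester--Gallai condition, and repeat with a second independent projection to recover the original polynomials). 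Note also that at $r=2$ the paper proves something stronger than the conjectured transcendence-degree bound, namely a bound on the $\C$-linear span, and it is this stronger linear statement that the robust incidence theorems actually deliver.
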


Note that Sylvester-Gallai's theorem is equivalent to the special case $r=1$.
A more general conjecture in \cite{Gupta14} is that a similar phenomenon holds when the polynomials come from different sets.

\begin{conjecture}[Conjecture 30 of \cite{Gupta14}]\label{con:gupta-ek} 
Let $R, B, G$ be finite disjoint sets of irreducible homogenous polynomials in $\C[x_1,\ldots,x_n]$ of degree $\leq r$  such that   for every pair $Q_1,Q_2$ from distinct sets there is a $Q_3$
in the remaining set so that whenever $Q_1$ and $Q_2$ vanish then also $Q_3$ vanishes. Then $\text{trdeg}_\C(R \cup B \cup G) \leq \lambda(r)$.\footnote{Here and in Conjectures~\ref{con:gupta-general} and \ref{con:gupta-sg} we actually need to assume that the polynomials are pairwise linearly independent.}
\end{conjecture}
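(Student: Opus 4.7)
The plan is to prove the theorem in the quadratic case $r=2$, which is the focus of the paper and is where the new ideas lie. The heart of the argument is a new \emph{structure theorem} classifying, for two pairwise linearly independent irreducible quadratics $Q_1,Q_2$, those quadratic polynomials $Q_3$ that vanish on the common zero set $V(Q_1)\cap V(Q_2)$. One would hope that $Q_3\in\mathrm{span}(Q_1,Q_2)$ always; by the Nullstellensatz and a degree count this is indeed the case whenever the ideal $(Q_1,Q_2)$ is radical, but the example given in the introduction shows that radicality can fail when $Q_1,Q_2$ share a linear component of their zero set. I expect the classification to read: either $Q_3\in\mathrm{span}(Q_1,Q_2)$, or there exist a bounded number of linear forms $\ell_1,\ldots,\ell_t$ such that $Q_1,Q_2,Q_3$ all lie in $\mathrm{span}\{\ell_i\ell_j\}_{i,j\leq t}$; equivalently, the three quadratics share a common low-dimensional linear support. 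This should follow from a primary decomposition of $(Q_1,Q_2)$ together with the observation that non-radicality forces certain pencil elements $\alpha Q_1+\beta Q_2$ to be reducible, which in turn forces the common linear forms.

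With the structure theorem in hand, I would perform a case analysis on the triples $(Q_1,Q_2,Q_3)$ arising from the colored hypothesis. If for most cross-color pairs the witness $Q_3$ lies in $\mathrm{span}(Q_1,Q_2)$, then the quadratics behave like points in projective space under the span relation, and I would invoke a \emph{robust colored Edelstein--Kelly theorem}, analogous to the robust Sylvester--Gallai bounds of \cite{barak2013fractional,DSW12}, to conclude that their linear span has dimension $O(1)$. Otherwise a large fraction of pairs fall into the structured case, producing many linear forms that themselves satisfy a Sylvester--Gallai-like condition; applying the (robust) Edelstein--Kelly theorem to those linear forms places them in an $O(1)$-dimensional space, and then the quadratics built out of them likewise span a space of dimension $O(1)$. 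The bound on transcendence degree then follows since linear dimension dominates $\mathrm{trdeg}$.

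The main obstacle I anticipate is the \emph{mixing} of the two regimes: a typical configuration will have some pairs producing a span-witness and others producing a structured witness, and these interact because a single quadratic may participate in triples of both types. To handle this I would need a sufficiently flexible robust Edelstein--Kelly theorem, allowing only a $\delta$-fraction of valid witnesses per pair and yielding dimension $\mathrm{poly}(1/\delta)$, then iteratively peel off an $O(1)$-dimensional ``generic'' subset at each step until no further configuration remains. A secondary subtlety is that the structure theorem must be robust to the choice of witness: different valid $Q_3$'s for the same pair $(Q_1,Q_2)$ may have different structural types, so one must coherently select a witness function, which I would do by preferring the span witness whenever it is available and fixing a canonical choice of auxiliary linear forms otherwise.
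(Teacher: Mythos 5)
Your high-level plan — prove a structure theorem for quadratics $Q_3\in\sqrt{(Q_1,Q_2)}$, prove a robust colored Edelstein--Kelly theorem, and combine them via a case split on whether most witnesses are span witnesses — matches the architecture of the paper's proof of the $r=2$ case (\autoref{thm:main-ek-intro}). However, the structure theorem you conjecture is too strong, and this gap propagates through the rest of your argument.

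You predict that in the non-span case there is a bounded set of linear forms $\ell_1,\dots,\ell_t$ with $Q_1,Q_2,Q_3\in\mathrm{span}\{\ell_i\ell_j\}$, i.e.\ all three quadratics have bounded linear support. That is false. The paper's \autoref{thm:structure} has two genuinely weaker alternatives: either some nontrivial pencil element $\alpha Q_1+\beta Q_2$ is a perfect square $b^2$, or there are two linear forms $b_1,b_2$ with $Q_1,Q_2,Q_3\in\sqrt{(b_1,b_2)}$. In the first alternative, write $Q_1=-\beta Q_2+b^2$; then $Q_3\in\sqrt{(Q_2,b)}$, so $Q_3=\delta Q_2+b\cdot c$ for some linear $c$ and scalar $\delta$. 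But $Q_2$ itself is unconstrained and may have arbitrarily high rank, so $Q_1,Q_2,Q_3$ need \emph{not} lie in the span of products of $O(1)$ linear forms — only their pairwise differences (modulo $Q_2$) are low-rank. Your proposed proof via primary decomposition would need to establish a statement of this weaker flavor; as stated it would prove something false, and the "common low-dimensional linear support" you want to feed into the robust Edelstein--Kelly argument simply does not exist at the level of a single triple.

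The real work in the paper, which your outline omits, is precisely in recovering a dimension bound from this weaker structure. The key move is that, after identifying two "bad" polynomials $Q_1,Q_2$, a $0.98$-fraction of the remaining polynomials satisfy the non-span cases with the \emph{same fixed pair} $(Q_1,Q_2)$. That shared anchor is what gives leverage: \autoref{cla:2-2} analyzes the polynomials of the form $Q_1+\ell^2=\beta Q_2+b^2$ and shows the forms $\ell,b$ live in a $4$-dimensional space, while \autoref{cla:3} (and its colored analogue \autoref{cla:3-i}) handles the $\sqrt{(b_1,b_2)}$ case by projecting the minimal support of $Q_1$ onto one new variable $z$, turning those quadratics into $z\cdot b_i$ and then applying a linear Sylvester--Gallai/Edelstein--Kelly argument to the $b_i$. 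Neither step is a consequence of "$Q_1,Q_2,Q_3$ have bounded support," and your plan as written would not get off the ground once the correct structure theorem is in hand. The robust Edelstein--Kelly ingredient (and the "peeling" heuristic) is in the right spirit and does appear in the paper in roughly the form you describe, but it is only applicable after the non-span polynomials have been shown to collapse to a low-dimensional affine slice of the form $\alpha Q_1 + F(V)$, and that is exactly the step your proposal lacks.
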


The case $r=1$ is the Edelstein-Kelly theorem. Both \autoref{con:gupta-sg} and \autoref{con:gupta-ek} were open, prior to this work, for any degree $r>1$.



\subsection{Our Results}

Our main results give affirmative answers to \autoref{con:gupta-sg} and \autoref{con:gupta-ek} for the case $r=2$.  This shows that a Sylvester-Gallai type phenomenon holds for degree $2$ and we believe this indicates that this might be the case for higher degrees as well. Specifically we prove the following two theorems. The first is an extension of the Sylvester-Gallai theorem to quadratic polynomials. It confirms \autoref{con:gupta-sg}  for the case $r=2$.

\begin{theorem}\label{thm:main-sg-intro}
Let $\{Q_i\}_{i\in [m]}$ be homogeneous quadratic polynomials over $\C$ such that each $Q_i$ is either irreducible or a square of a linear function. Assume further that for every $i\neq j$ there exists $k\not\in \{i,j\}$ such that whenever $Q_i$ and $Q_j$ vanish $Q_k$ vanishes as well. Then the linear span of the $Q_i$'s has dimension $O(1)$.
\end{theorem}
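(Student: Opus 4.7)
My plan is to establish a structural classification of triples of quadratics satisfying the vanishing condition, and then combine it with (robust) Sylvester--Gallai and Edelstein--Kelly type theorems.

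\textbf{Step 1: Structure theorem for triples.} The key technical lemma (promised in the abstract) should classify, for homogeneous quadratics $Q_1,Q_2,Q_3$ satisfying $Q_3\in\sqrt{(Q_1,Q_2)}$, the possible ``shapes'' of the triple. I would try to prove that, up to replacing $Q_3$ by $Q_3-\alpha Q_1-\beta Q_2$, one of the following occurs: (i) $Q_3\in\spn(Q_1,Q_2)$; (ii) $Q_1,Q_2,Q_3$ share a nontrivial common linear factor; or (iii) all three polynomials lie in the ideal generated by a bounded number of linear forms, which covers the cases where $(Q_1,Q_2)$ is not radical or has embedded/reducible components. The derivation would proceed by analyzing the primary decomposition of $(Q_1,Q_2)$, splitting on whether each $Q_i$ is irreducible, a square of a linear form, or a product of two linear forms, and using the geometry of intersections of two quadrics in projective space to rule out non-linear dependencies modulo a common factor.

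\textbf{Step 2: Bucketing by structure type.} Fix any $Q_i\in\cQ$ and, for each $j\neq i$, record which case of Step~1 governs the triple $(Q_i,Q_j,Q_k)$ supplied by the hypothesis. An averaging argument forces one of three buckets to contain a constant fraction of the $Q_j$'s. If the ``common linear factor'' bucket dominates, then many $Q_j=\ell\cdot m_j$ for a fixed $\ell$ dividing $Q_i$, and the vanishing hypothesis descends to an ordinary Sylvester--Gallai condition on the linear forms $\{m_j\}$, giving an $O(1)$-dimensional span. The ``low-dimensional ideal'' bucket is handled similarly by restricting to a constant number of variables.

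\textbf{Step 3: Generic linear-span case via robust Edelstein--Kelly, and iteration.} In the remaining bucket, a constant fraction of pairs $(Q_i,Q_j)$ satisfy $Q_k\in\spn(Q_i,Q_j)$, meaning the points $[Q_i],[Q_j],[Q_k]$ are collinear in the projectivization of the space of quadratic forms. This is precisely the setup for a (colored) robust Sylvester--Gallai / Edelstein--Kelly theorem in that ambient linear space, which yields a constant bound on the span of the contributing polynomials. Finally I would iterate: peel off one $O(1)$-dimensional ``common-factor family'' at a time, and argue that only $O(1)$ iterations occur before the residual configuration sits in a constant-dimensional space.

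The main obstacle will be Step~1. A clean enumeration of the ways a quadratic can lie in the radical of two others is genuinely delicate: one must handle non-radical ideals (where quadratics in the radical need not be spanned by $Q_1,Q_2$, as the $xy$--$zw$ example in the introduction already hints), embedded components, and multiple layers of degeneracy when $Q_1$ or $Q_2$ factors, all while keeping the list of exceptional cases short and uniform enough that Step~3 can tolerate them. This is precisely why the paper emphasizes new robust versions of Edelstein--Kelly: marrying the algebraic classification with sufficiently robust combinatorial SG-tools is the heart of the argument.
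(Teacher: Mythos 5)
Your high-level template --- prove a structural classification of triples $Q_3\in\sqrt{(Q_1,Q_2)}$, then combine it with robust Sylvester--Gallai / Edelstein--Kelly machinery --- is indeed the skeleton of the paper's argument, and you correctly identify Step~1 as the crux. But the classification you propose in Step~1 is wrong in a way that would cause the rest of the argument to fail, and the case you are missing is precisely the subtle one that drives much of the paper's work.

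Your case (ii), ``$Q_1,Q_2,Q_3$ share a nontrivial common linear factor,'' is vacuous under the standing assumption that each $Q_i$ is either irreducible or a perfect square: irreducible quadratics have no linear factors at all. Meanwhile your case (iii), ``all three lie in the ideal generated by a bounded number of linear forms,'' does correspond to one genuine degeneracy (the paper's Case~3: $Q,Q_1,Q_2\in\sqrt{(\ell_1,\ell_2)}$, i.e.\ all three vanish on a common codimension-$2$ linear space). What your list omits is the paper's Case~2: some nontrivial linear combination $\alpha Q_1+\beta Q_2$ is a \emph{square} of a linear form. This is a real and separate phenomenon. Take $Q_1=xy+zw$, $Q_2=Q_1+x^2$, $Q_3=Q_1+xz$: when $Q_1=Q_2=0$ we get $x=0$ and hence $Q_3=0$, so $Q_3\in\sqrt{(Q_1,Q_2)}$; yet $Q_3\notin\spn\{Q_1,Q_2\}$, none of the three share a linear factor, and the three polynomials certainly do not all live in an ideal generated by $O(1)$ linear forms (the $\rank_s$ of $Q_1$ can be made arbitrarily large). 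Your bucketing in Step~2 has no bucket for such a pair $(Q_i,Q_j)$, so the averaging argument breaks. This case requires its own dedicated argument (the paper's Claim~5.2 / Corollary~5.3, which analyze the constraint $F_i=Q_1+\ell_i^2=\beta_i Q_2+b_i^2$ directly) and cannot be absorbed into your cases (ii) or (iii).

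Two smaller points. First, the paper derives the structure theorem via the resultant $\res_{x_1}(Q_1,Q_2)$ and a degree count on how a quartic can factor, not via primary decomposition; the resultant approach is what produces the clean three-case list, and it is not obvious that a primary-decomposition argument lands on the same (or an equally usable) classification. Second, your Step~3 iteration --- ``peel off one $O(1)$-dimensional common-factor family at a time, argue only $O(1)$ iterations occur'' --- is not what the paper does and is not justified as stated: you do not explain why the process terminates in $O(1)$ steps or why the residual configuration inherits the hypotheses. The paper instead runs a two-way dichotomy (at most one ``bad'' polynomial, where robust SG applies directly; or at least two, where \emph{one} application of the structure theorem to the fixed pair $(Q_1,Q_2)$ already forces $98\%$ of the set into Cases~2/3 and hence into a fixed $O(1)$-dimensional space spanned by $Q_1$ and quadratics over a small $V$), with the squares in $\cL$ handled by a separate reduction to ordinary SG on linear forms. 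Fixing Step~1 to include the ``pencil contains a square'' case, and replacing the iteration with this dichotomy, is where the real work lies.
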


The second theorem is an extension of the theorem of Edelstein-Kelly to quadratic polynomials, which gives an affirmative answer to \autoref{con:gupta-ek} for the case $r=2$..

\begin{theorem}\label{thm:main-ek-intro}
Let $\cT_1,\cT_2$ and $\cT_3$ be finite sets of homogeneous quadratic polynomials over $\C$ satisfying the following properties:
\begin{itemize}
\item Each $Q\in\cup_i\cT_i$ is either irreducible or a square of a linear function.\footnote{We replace a linear function with its square to keep the sets homogeneous of degree $2$.}
\item No two polynomials are multiples of each other (i.e., every pair is linearly independent).
\item For every two polynomials $Q_1$ and $Q_2$ from distinct sets there is a polynomial $Q_3$ in the third set so that whenever $Q_1$ and $Q_2$ vanish then also $Q_3$ vanishes. 
\end{itemize}
Then the linear span of the polynomials in $\cup_i\cT_i$ has dimension $O(1)$.
\end{theorem}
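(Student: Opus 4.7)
The plan is to build on a classification theorem (promised in the abstract) that characterizes every triple $(Q_1,Q_2,Q_3)$ of quadratics with $Q_3\in\sqrt{(Q_1,Q_2)}$, and then to combine a pigeonhole case split with a robust colored Sylvester--Gallai statement on a quotient space together with a factor-reduction to the linear Edelstein--Kelly theorem.

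\textbf{Setup via the classification.} First rephrase the hypothesis on $\cT_1,\cT_2,\cT_3$ as: for every pair from distinct sets, the guaranteed third polynomial $Q_3$ lies in $\sqrt{(Q_1,Q_2)}$. The classification should yield a bounded list of possibilities, the principal ones being (a) $Q_3\in\spn(Q_1,Q_2)$; (b) $Q_1,Q_2,Q_3$ share a common linear factor; (c) some element of $\spn(Q_1,Q_2)$ is the square of a linear form $\ell$ and $\ell$ divides $Q_3$. Each of these is structurally tame enough to feed into a Sylvester--Gallai-type argument.

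\textbf{Spanning case.} Fix a polynomial $Q_1\in\cT_1$ and, for each $Q_2\in\cT_2$, pick the guaranteed $Q_3(Q_2)\in\cT_3$; record the case label from the classification. By pigeonhole a positive constant fraction of $Q_2$'s share a single label. If case (a) dominates, then in the quotient $V=\C[\vx]_{=2}/\spn(Q_1)$ the images of $Q_2$ and $Q_3(Q_2)$ are proportional. Varying $Q_1$ over $\cT_1$, and by symmetry over $\cT_2$ and $\cT_3$, produces a \emph{robust} colored Sylvester--Gallai configuration on the projected polynomials. The robust Edelstein--Kelly theorem (in the form referenced in the abstract) then bounds the dimension of the image by a constant, and adding back the one-dimensional subspaces $\spn(Q_1)$ for each distinguished $Q_1$ yields $\dim\spn(\cup_i\cT_i)=O(1)$.

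\textbf{Shared-factor case.} If instead case (b) or (c) dominates for the fixed $Q_1$, the relevant linear form is one of the at most two linear factors of $Q_1$ (using that $Q_1$ is irreducible or a square); a further pigeonhole selects a single $\ell$ that divides a positive fraction of the $Q_3(Q_2)$'s and, correspondingly, of the $Q_2$'s. Dividing these by $\ell$ gives linear forms in $\cT_2$ and $\cT_3$, on which the colored vanishing condition descends to the classical Edelstein--Kelly setting (or its robust version), bounding the span of that subset. The remaining polynomials---those incompatible with $\ell$---can be shown to lie in case (a) relative to a different distinguished polynomial, so iterating the previous step a constant number of times finishes the bound.

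\textbf{Main obstacle.} The crux is the classification of triples $(Q_1,Q_2,Q_3)$ with $Q_3\in\sqrt{(Q_1,Q_2)}$: the interplay of reducibility with the rank of each quadratic generates many subcases, and ruling out ``mixed'' decompositions of $V(Q_1)\cap V(Q_2)$ in which $Q_3$ splits across components will likely require careful resultant/elimination-theoretic arguments. A secondary difficulty is that the projection $\C[\vx]_{=2}\to V$ can collapse distinct quadratics, destroying pairwise linear independence in the quotient; such collisions must be re-routed into the shared-factor reduction without creating circular dependence between the two cases, and this bookkeeping---together with maintaining the density hypothesis required by the robust Edelstein--Kelly theorem---is the most delicate part of the argument.
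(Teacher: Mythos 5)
Your high-level template (classification theorem for triples, pigeonhole over cases, feed the ``spanning'' case to a robust colored Sylvester--Gallai theorem, reduce the remaining cases to linear-form configurations) does match the paper's strategy. However, two substantive features of your plan are off in ways that would make the argument fail.

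First, the classification. The paper's structure theorem (Theorem~\ref{thm:structure-intro}) gives three cases: (1) $Q\in\spn\{Q_1,Q_2\}$; (2) some nontrivial $\alpha Q_1+\beta Q_2$ is the square of a linear form; (3) there exist \emph{two} linear forms $b_1,b_2$ such that $Q,Q_1,Q_2$ all lie in $\sqrt{(b_1,b_2)}$, i.e.\ all three vanish on a common codimension-$2$ linear subspace. Your cases (b) and (c) are strictly narrower and in fact cannot happen for the polynomials in question: (b) asserts a common linear \emph{factor}, but irreducible quadratics have no linear factors at all (and this hypothesis is imposed precisely to avoid such degenerations); the correct replacement is the codimension-$2$ common-zero condition, which is far weaker and includes triples like $Q_1=x_1x_2+x_3x_4$, $Q_2=x_1x_3+x_2x_4$, $Q_3=x_1x_5+x_4x_6$, none of which share a factor. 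Likewise (c) requires the special square root $\ell$ to divide $Q_3$, but in the paper's Case~\ref{case:rk1:int} no such divisibility holds; e.g.\ $Q_2=\alpha Q_1+b^2$ with $Q=\beta Q_1+b\cdot a$ for arbitrary $a$. Since the downstream arguments in the ``shared-factor'' branch hinge on dividing by $\ell$ to obtain a configuration of linear forms, this branch as you describe it does not apply to the cases that actually occur; the paper instead handles Case~\ref{case:2:int} by sending the (at most four) linear forms of a minimal representation of a fixed $Q$ to random multiples of a fresh variable $z$, so that each surviving quadratic becomes $z\cdot b_i$, and then applies the robust (linear) SG theorem to the $b_i$'s (Claims~\ref{cla:3} and \ref{cla:3-i}).

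Second, the spanning case. You propose quotienting $\C[\vx]_{=2}$ by $\spn\{Q_1\}$ and declaring that the images form a robust colored SG configuration. But you fix one $Q_1$ at a time and then ``vary $Q_1$ over $\cT_1$''; distinct $Q_1$'s give distinct quotients, and it is unclear how the one-dimensional collapses across different quotients can be reconciled into a single robust EK instance. The paper avoids this by not projecting at all: it works directly with the coefficient vectors of the quadratics and defines ``bad'' polynomials as those that fail to satisfy the spanning condition with a constant fraction of a given set. If every $\cQ_i$ has at most one bad polynomial, the three sets of coefficient vectors already form (after ignoring a bounded number of exceptions) a $\delta$-EK configuration and \autoref{cor:EK-robust} applies; if some $\cQ_i$ has two bad polynomials, the structure theorem forces a constant fraction of polynomials into Cases~\ref{case:rk1:int}/\ref{case:2:int} and the argument proceeds by building an explicit $O(1)$-dimensional affine slice containing everything. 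A quotient-space approach may be salvageable, but it is not what the paper does, and the hand-off between quotients that you would need is exactly the ``bookkeeping'' you flag as delicate but do not resolve.

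You also do not address the separate bookkeeping for the squares of linear forms: the paper partitions each $\cT_i$ into irreducible quadratics $\cQ_i$ and squares $\cL_i$ and handles $\cL_i$ at the end via \autoref{cla:linear-remainder}. This is not cosmetic, since the classification behaves differently on reducible quadratics, and the robust EK input must be built from the $\cQ_i$'s alone.
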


Note that what we proved is even stronger than what was conjectured in Conjectures~\ref{con:gupta-sg} and \ref{con:gupta-ek}. There the conjecture is that there is an upper bound on the algebraic rank whereas our results give an upper bound on the linear rank (which of course trivially implies an upper bound on the algebraic rank).

From the perspective of PIT our results do not imply  \autoref{con:gupta-general}, even for the case of $k=3$ and $r=2$, yet we believe they are a significant step in the direction of resolving this conjecture and obtaining a PIT algorithm for $\Sigma^{[3]}\Pi^{[d]}\Sigma\Pi^{[2]}$ circuits.

An important tool in the proof of \autoref{thm:main-sg-intro} is a result of \cite{barak2013fractional,DSW12} that gives a robust version of the Sylvester-Gallai theorem (see \autoref{sec:robust-SG}). For the proof of \autoref{thm:main-ek-intro} we need the following relaxation of the Edelstein-Kelly theorem. Roughly, three finite sets form a $\delta$-EK configuration if for every point $p$ in one set a $\delta$ fraction of the points in a second set satisfy that the line connecting each of them to $p$  passes through a point in the third set.

\begin{theorem}\label{thm:EK-robust}
Let $0<\delta \leq 1$ be any constant. Let $\cT_1,\cT_2,\cT_3\subset\C^n$ be disjoint finite subsets that form a $\delta$-EK configuration.  Then $\dim(\spn\{\cup_i \cT_i\}) \leq O(1/\delta^3)$.
\end{theorem}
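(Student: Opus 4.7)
The plan is to reduce \autoref{thm:EK-robust} to the robust Sylvester-Gallai theorem of \cite{barak2013fractional,DSW12} (henceforth robust SG), which bounds the dimension of a single-color configuration with $\delta$-fraction good lines by $O(1/\delta)$. Two additional $1/\delta$ factors will be lost on top, accounting for the target $O(1/\delta^3)$.

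The first ingredient is a span-containment observation: if $p\in\cT_k$ lies on a line through $q\in\cT_i$ and $r\in\cT_j$ (three distinct points), then $p\in\spn\{q,r\}$. The $\delta$-EK hypothesis provides such a triple for essentially every $p$ (as soon as the two other sets are not trivially tiny), so $\cT_k\subseteq\spn(\cT_i\cup\cT_j)$ and hence $\spn(\cup_i\cT_i)=\spn(\cT_i\cup\cT_j)$ for every pair of indices. This lets one absorb a small set into the span of the remaining two without changing the total dimension, and in particular lets one always work with the two largest sets.

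The second ingredient is a direct application of robust SG to the merged set $\cT=\cup_i\cT_i$. For $p\in\cT_i$, the $\delta$-EK hypothesis supplies at least $\delta|\cT_j|$ good neighbors for some $j\neq i$, so the fraction of good neighbors of $p$ in $\cT$ is at least $\delta\alpha$, where $\alpha=\min_j|\cT_j|/|\cT|$. Robust SG then yields $\dim(\spn(\cT))\le O(1/(\delta\alpha))$, and if $\alpha\geq\delta^2$ this already gives the desired $O(1/\delta^3)$ bound.

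The main obstacle is the unbalanced regime $\alpha<\delta^2$, where one set (say $\cT_1$) is a tiny fraction of $\cT$. The span-containment step lets one replace $\cT$ by $\cT_2\cup\cT_3$ without loss of dimension, but the ``third point'' on each good line now sits in $\cT_1$ rather than in the reduced configuration, so ordinary robust SG no longer applies to the reduced set. The resolution is either to establish a variant of robust SG in which the collinear third point need only lie in the ambient span of the configuration, via a colored design-matrix argument modelled on \cite{barak2013fractional,DSW12}, or to carry out a case analysis that loses only one further $1/\delta$ factor and combines with the previous paragraph to produce the claimed $O(1/\delta^3)$ bound. Calibrating this last loss precisely, and verifying that the two reduction steps compose cleanly, is the technical heart of the argument.
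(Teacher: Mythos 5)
The span-containment observation and the direct application of robust SG in the balanced regime $\alpha\geq\delta^2$ are both correct, and this part of the argument does give the right bound $O(1/(\delta\alpha))=O(1/\delta^3)$ when the smallest set is at least a $\delta^2$-fraction of the whole. The gap is precisely where you flag it: the unbalanced regime. You propose two possible resolutions --- a colored variant of robust SG in which the third collinear point need only lie in the ambient span, or an unspecified case analysis --- but carry out neither. That missing piece is the bulk of the work, and it cannot be dismissed as a calibration issue: replacing $\cT$ by $\cT_2\cup\cT_3$ destroys the SG structure (the spanned third points lie in $\cT_1$), and there is no known black-box colored robust SG theorem of the form you would need. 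So the proposal, as written, does not yet prove the theorem.

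For comparison, the paper splits not on $\alpha$ versus $\delta^2$ but on $m_3 > m_1^{1/3}$ versus $m_3 \leq m_1^{1/3}$ (with $m_1\geq m_2\geq m_3$). In the first case it does \emph{not} apply robust SG to the union directly; it first subsamples the largest set $\cT_1$ down to size $\approx m_2$, uses Chernoff to show the resulting $\cT'_1\cup\cT_2\cup\cT_3$ is still a $(\delta/8)$-SG configuration, invokes \autoref{thm:bdwy}, and then pulls $\cT_1$ back in via the span-containment step. This sampling trick handles far more imbalance than the threshold $\alpha\geq\delta^2$: for instance $m_3 = m_1^{1/2}$ with $m_1$ huge has $\alpha$ arbitrarily small yet lies in the paper's first case. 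Your direct application of robust SG, without sampling, degrades as $\alpha\to 0$ and covers only a strict sub-regime. In the second case ($m_3 \leq m_1^{1/3}$) the paper abandons SG entirely and runs a greedy subspace-covering argument: for $p\in\cT_2$ and $q\in\cT_3$ it considers the planes $V(p,q)=\spn\{p,q\}$, shows (\autoref{cla:T1-intersect}, \autoref{cla:one-large-intersect}) that a fresh choice of $p$ covers at least $(\delta/2)m_1$ new points of $\cT_1$, and concludes the process terminates in $O(1/\delta)$ steps producing a set $\cI$ of size $O(1/\delta^3)$ spanning $\cT_2$; it then extends to $\cT_3$ and $\cT_1$. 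This is a different mechanism from anything in your sketch, and it is precisely the piece you would need to supply to close the gap.

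One more caution: even the span-containment observation, used as a reduction, does not buy what you suggest. Knowing $\spn(\cup_i\cT_i)=\spn(\cT_2\cup\cT_3)$ merely lets you bound the total dimension once you have bounded $\dim\spn(\cT_2\cup\cT_3)$; it does not by itself let you ``absorb'' $\cT_1$ and forget about it, because the configuration structure you need to exploit on $\cT_2\cup\cT_3$ is mediated by $\cT_1$.
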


This theorem is similar in nature to the results proved in \cite{barak2013fractional,DSW12} (see \autoref{thm:bdwy}) but it does not seem to directly follow from them.

\subsection{Proof Idea}\label{sec:proof-idea}

The basic tool in proving Theorems~\ref{thm:main-sg-intro} and \ref{thm:main-ek-intro} is the following result that characterizes the different cases when a quadratic polynomial is in the radical of the ideal generated by two other quadratics, i.e., that is vanishes when the two quadratic polynomials vanish.

\begin{theorem}\label{thm:structure-intro}
Let $Q,Q_1,Q_2$ be such that whenever $Q_1$ and $Q_2$ vanish then also $Q$ vanishes. Then one of the following cases hold:
\begin{enumerate}
\item $Q$ is in the linear span of $Q_1,Q_2$  \label{case:span:int}
\item \label{case:rk1:int}
There exists a non trivial linear combination of the form $\alpha Q_1+\beta Q_2 = \ell^2$ where  $\ell$ is a linear function 
\item There exist two linear functions $\ell_1$ and $\ell_2$ such that when setting $\ell_1=\ell_2=0$ we get that $Q,Q_1$ and $Q_2$ vanish.  \label{case:2:int}
\end{enumerate}
\end{theorem}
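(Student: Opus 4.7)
The plan is to analyze the linear pencil $\cP = \{\alpha Q_1 + \beta Q_2 : (\alpha,\beta)\in\C^2\setminus\{0\}\}$ according to the minimum rank (as a quadratic form) appearing in it. Recall that a nonzero homogeneous quadratic is a square $\ell^2$ iff it has rank $1$, factors as a product $\ell_1\ell_2$ of two linearly independent linear forms iff it has rank exactly $2$, and is irreducible iff its rank is at least $3$. First handle the degenerate case where $Q_1,Q_2$ share a linear factor $\ell$: writing $Q_i = \ell\cdot m_i$ we get $V(Q_1,Q_2) = V(\ell)\cup V(m_1,m_2)$, and vanishing of $Q$ on the hyperplane $V(\ell)$ forces $\ell\mid Q$; a short check then puts us in case~\ref{case:span:int} or case~\ref{case:2:int} (with $\ell_1=m_1$, $\ell_2=m_2$). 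From here on assume $\gcd(Q_1,Q_2)=1$, so $V(Q_1,Q_2)$ is a complete intersection of codimension $2$.

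If some element of $\cP$ has rank $1$, we are immediately in case~\ref{case:rk1:int}. If some element has rank $2$ but none has rank $1$, replace $Q_1$ by that element so that $Q_1 = \ell_1 \ell_2$ with $\ell_1,\ell_2$ linearly independent, and decompose $V(Q_1,Q_2)=V(\ell_1,Q_2)\cup V(\ell_2,Q_2)$. The restriction $\bar Q_2 := Q_2|_{\ell_1=0}$ is a quadratic in $n-1$ variables, and $\bar Q := Q|_{\ell_1=0}$ must vanish on $V(\bar Q_2)$. When $\bar Q_2$ is irreducible (rank $\geq 3$) this forces $\bar Q = c_1 \bar Q_2$, i.e.\ $Q = c_1 Q_2 + \ell_1 M_1$ for some linear $M_1$; symmetrically $Q = c_2 Q_2 + \ell_2 M_2$. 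If $c_1 = c_2$, comparing gives $\ell_1 M_1 = \ell_2 M_2$, whence $M_1 \propto \ell_2$ and $Q\in\spn\{Q_1,Q_2\}$ (case~\ref{case:span:int}); if $c_1\ne c_2$ the identity $(c_1-c_2)Q_2 = \ell_2 M_2 - \ell_1 M_1$ shows $Q_1,Q_2,Q$ all vanish on $V(\ell_1,\ell_2)$, which is case~\ref{case:2:int}. The leftover subcase where $\bar Q_2$ itself factors in $n-1$ variables is handled analogously: the new linear factors, together with $\ell_1$, either produce two linear forms whose common zero annihilates all three polynomials (case~\ref{case:2:int}) or, after combining with the corresponding analysis on $\ell_2=0$, force $Q\in\spn\{Q_1,Q_2\}$.

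The remaining case is when every nonzero element of $\cP$ is irreducible, and here the plan is to show that $(Q_1,Q_2)$ is a \emph{radical} ideal; then $Q\in (Q_1,Q_2)$ and degree counting forces $Q = \alpha Q_1 + \beta Q_2$, giving case~\ref{case:span:int}. Since $Q_1,Q_2$ are coprime, $(Q_1,Q_2)$ is a complete intersection, hence Cohen--Macaulay and unmixed, so by the Jacobian criterion it suffices to show $V(Q_1,Q_2)$ is generically reduced, i.e.\ that its singular locus is a proper subset of every irreducible component. A singular point $p\in V(Q_1,Q_2)$ yields a nontrivial $(\alpha,\beta)$ with $\alpha\nabla Q_1(p)+\beta\nabla Q_2(p)=0$, i.e.\ $p$ is a self-singular point of the quadric $Q' = \alpha Q_1+\beta Q_2$, forcing $Q'$ to be a cone with vertex containing $p$; the rank-$\geq 3$ hypothesis bounds the dimension of this vertex by $n-3$, which is strictly smaller than the dimension $n-2$ of a component of the complete intersection. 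Ruling out all such coincidences, one concludes that the singular locus of $V(Q_1,Q_2)$ has dimension at most $n-3$, so no component is non-reduced and the ideal is radical. The main obstacle is this last step: controlling the singular locus uniformly over the $\mathbb{P}^1$-family of quadrics in $\cP$, and checking that components of $V(Q_1,Q_2)$ cannot conspire to lie entirely inside the vertex of some degenerate pencil member.
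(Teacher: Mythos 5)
Your approach is genuinely different from the paper's. The paper normalizes $Q_1=\sum_{i=1}^r x_i^2$, eliminates $x_1^2$ from $Q_2$ and $Q$, and then analyses the factorization of the degree-$4$ resultant $\res_{x_1}(Q_1,Q_2)=A^2+b_2^2Q_1'$, combined with the relation $bA+b_2B\in\sqrt{(\res_{x_1}(Q_1,Q_2))}$; this is an elementary and self-contained argument. You instead stratify the pencil $\cP$ by minimum rank and, in the main case (all pencil members irreducible), appeal to the complete-intersection / Cohen--Macaulay machinery to reduce the theorem to showing $(Q_1,Q_2)$ is radical. The rank-$1$ and rank-$2$ branches of your argument are fine: the restriction-and-compare trick with $\bar Q_2 = Q_2|_{\ell_1=0}$, $\bar Q=Q|_{\ell_1=0}$ correctly yields case~\ref{case:span:int} when $c_1=c_2$ and case~\ref{case:2:int} when $c_1\ne c_2$ (and when $\bar Q_2$ is itself reducible you directly fall into case~\ref{case:2:int}).

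The gap, which you yourself flag, is real and not merely a loose end. Your plan is to bound $\dim\mathrm{Sing}(V(Q_1,Q_2))$ by observing that each degenerate pencil member is a cone with vertex of dimension $\le n-3$. But the singular locus is contained in $V(Q_1,Q_2)\cap\bigcup_{(\alpha:\beta)\in\mathbb{P}^1}\ker M_{\alpha Q_1+\beta Q_2}$, and when $\det M_{\alpha Q_1+\beta Q_2}$ vanishes identically (which can happen even when every member has rank $\ge 3$, e.g.\ whenever $Q_1,Q_2$ effectively involve fewer than $n$ variables), this union is a $1$-parameter family of $(\le n-3)$-dimensional subspaces and can have dimension $n-2$ --- exactly the dimension of a component of the complete intersection. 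So the naive dimension count does not show the singular locus is proper in each component, and the step ``one concludes that the singular locus has dimension at most $n-3$'' is unjustified as written. A clean way to close it is to replace the singular-locus estimate with a Bezout/degree argument: by unmixedness all components of $V(Q_1,Q_2)\subset\mathbb{P}^{n-1}$ have dimension $n-3$, the total degree with multiplicity is $4$, so any non-reduced (multiplicity $\ge 2$) component has degree $\le 2$; an irreducible degree-$\le 2$ variety of that dimension is a codimension-$2$ linear space $V(\ell_1,\ell_2)$ (giving case~\ref{case:2:int} directly) or a quadric $V(\ell,Q')$ in a hyperplane, and in the latter case writing $Q_i=a_i\ell+b_iQ'$ gives either $\ell\mid\gcd(Q_1,Q_2)$ or a rank-$\le 2$ pencil member $b_2Q_1-b_1Q_2=(b_2a_1-b_1a_2)\ell$, contradicting your standing hypothesis. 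With that replacement your route goes through; without it the final case is open.
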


The theorem guarantees that unless $Q$ is in the linear span of $Q_1$ and $Q_2$ then $Q_1$ and $Q_2$ must satisfy a very strong property, namely, they must span a square of a linear function or they have a very low rank (as quadratic polynomials). The proof of this theorem is based on analyzing the resultant of $Q_1$ and $Q_2$ with respect to some variable. We now explain how this theorem can be used to prove \autoref{thm:main-sg-intro}.

Consider a set of polynomials $\cT=\{Q_i\}$ satisfying the condition of \autoref{thm:main-sg-intro}. If for every $Q\in\cT$ for at least, say, $(1/100)\cdot |\cT|$ of the polynomials  $Q_i\in \cT$ there is a another polynomial in $\spn(Q,Q_i)$ then the claim follows by the robust version of the Sylvester-Gallai theorem proved in \cite{barak2013fractional,DSW12} (\autoref{thm:bdwy}). So let us assume this is not the case. And in fact, let us assume that there are two polynomials $Q_1,Q_2\in\cT$ for which this does not hold. This means that at least $0.98$ fraction of the polynomials in $\cT$ satisfy Case~\ref{case:rk1:int} or Case~\ref{case:2:int} of \autoref{thm:structure-intro} with $Q_1$ and $Q_2$. This gives very strong restriction on the structure of these $0.98\cdot|\cT|$ polynomials. 

To use this structure we first show that the polynomials satisfying Case~\ref{case:rk1:int}  of \autoref{thm:structure-intro} with both $Q_1$ and $Q_2$ also span a low dimensional space (\autoref{cla:2-2}). The intuition is that every such polynomial can be represented as both $\alpha Q_1 + \ell_1^2$ and as $\beta Q_2 + \ell_2^2$. This gives rise to many different equations involving $Q_1$ and $Q_2$. Analyzing those equations we show that all those $\ell_i$ span a low dimensional space.

The remaining polynomials must satisfy Case~\ref{case:2:int} of \autoref{thm:structure-intro} with either $Q_1$ or $Q_2$. We then show (\autoref{cla:3}) that, under the conditions of \autoref{thm:main-sg-intro}, all the polynomials that satisfy Case~\ref{case:2:int} of \autoref{thm:structure-intro} with, say, $Q_1$ span a low dimensional space. The intuition is that if we map the linear functions in some ``minimal'' representation of $Q_1$ to a new variable $z$, then all these polynomials will be mapped to quadratics of the form $z\cdot \ell_i$. We then show that these $\ell_i$'s satisfy the usual Sylvester-Gallai condition and hence get a bound on their span.

The proof outline of \autoref{thm:main-ek-intro}  involves more cases, but it is still similar in spirit and is based on studying the case where our three sets do not satisfy the robust version of the Edelstein-Kelly theorem (\autoref{thm:EK-robust}).

To prove \autoref{thm:EK-robust} we would like to reduce to the robust version of  the Sylvester-Gallai theorem proved in \cite{barak2013fractional,DSW12}. For example, if all our sets are of the same size then their union forms a  $\delta/3$-SG configuration (see \autoref{sec:robust-SG}) and we can conclude using the result of \cite{barak2013fractional,DSW12}. Thus, the main issue is what to do when the sets are of very different sizes. When the largest set has size  polynomial in the size of the smallest set then we prove that by sampling a random subset of appropriate size from the largest set and taking its union with the two other sets we again get a $\delta/6$-SG configuration. This implies that the second largest and smallest sets live in an $O(1)$-dimensional space and hence all the sets span an $O(1)$-dimensional space. The proof of the case where the largest set is much larger than the smaller set is different and is based on a completely different covering argument. 

\subsection{Organization}
The paper is organized as follows. \autoref{sec:prelim} contains basic facts regarding the resultant and some other basic tools and notation, including the robust version of the Sylvester-Gallai theorem of \cite{barak2013fractional,DSW12}. In \autoref{sec:robust-EK} we define the notion of a $\delta$-EK configuration and prove \autoref{thm:EK-robust}. \autoref{sec:structure} contains the proof of our structure theorem (\autoref{thm:structure-intro}). In \autoref{sec:quad-SG} we give the proof of \autoref{thm:main-sg-intro} and in \autoref{sec:quad-EK} we prove \autoref{thm:main-ek-intro}. Finally in \autoref{sec:discussion} we discuss further directions and open problems.

\section{Preliminaries}\label{sec:prelim}

In this section we explain our notation, give some basic facts from algebra that will be useful in our proofs and state a robust version of the Sylvester-Gallai theorem.

We will mostly use the following notation. Greek letters $\alpha, \beta,\ldots$ denote scalars from the field.  Uncapitalized letters $a,b,c,\ldots$  denote linear functions and $x,y,z$ denote variables (which are also linear functions). We denote $\vx=(x_1,\ldots,x_n)$. Capital letters such as $A,Q,F$  denote quadratic polynomials whereas $V,U,W$ denote linear spaces. Calligraphic letters $\cal I,J,F,Q,T$  denote sets. For a positive integer $n$ we denote $[n]=\{1,2,\ldots,n\}$.

We will also need on the following version of Chernoff bound. See e.g. Theorem 4.5 in \cite{MU05-book}.

\begin{theorem}[Chernoff bound]\label{thm:chernoff}
Suppose $X_1,\ldots, X_n$ are independent indicator random variables. Let $\mu = E[X_i]$ be the expectation of $X_i$. Then,
$$\Pr\left[\sum_{i=1}^{n}X_i < \frac{1}{2}n\mu\right] < \exp(-\frac{1}{8}n\mu).$$
\end{theorem}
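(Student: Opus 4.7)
The plan is to prove this via the standard moment generating function argument (Chernoff's method). First I would fix a parameter $t>0$ to be optimized later, and observe that $\sum_i X_i < \tfrac{1}{2}n\mu$ iff $\exp(-t\sum_i X_i) > \exp(-tn\mu/2)$. Applying Markov's inequality to the nonnegative random variable $\exp(-t\sum_i X_i)$ and using independence of the $X_i$, I get
$$\Pr\!\left[\sum_{i=1}^n X_i < \tfrac{1}{2}n\mu\right] < \exp(tn\mu/2)\prod_{i=1}^n \mathbb{E}\!\left[\exp(-tX_i)\right].$$

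Next I would compute the single-variable moment generating function. Since each $X_i$ is an indicator with mean $\mu$, $\mathbb{E}[\exp(-tX_i)] = (1-\mu) + \mu e^{-t} = 1+\mu(e^{-t}-1)$, and using the elementary inequality $1+x \le e^x$ this is at most $\exp(\mu(e^{-t}-1))$. Substituting back yields
$$\Pr\!\left[\sum_i X_i < \tfrac{1}{2}n\mu\right] < \exp\!\bigl(n\mu\cdot(e^{-t}-1) + tn\mu/2\bigr).$$

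Finally I would optimize. Differentiating the exponent in $t$ gives the optimal $t = \ln 2$, at which $e^{-t}-1 = -\tfrac{1}{2}$ and the exponent becomes $n\mu\bigl(-\tfrac12 + \tfrac{\ln 2}{2}\bigr) = -\tfrac{n\mu}{2}(1-\ln 2)$. A small numerical check shows $1-\ln 2 > 1/4$ (since $\ln 2 < 0.7$), so the exponent is at most $-n\mu/8$, yielding the claimed bound. Alternatively, one can derive the cleaner general form $\Pr[\sum X_i < (1-\delta)n\mu] \le \exp(-\delta^2 n\mu/2)$ valid for $0<\delta<1$ using the inequality $\tfrac{e^{-\delta}}{(1-\delta)^{1-\delta}} \le e^{-\delta^2/2}$, and then specialize to $\delta = 1/2$.

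There is no real obstacle here; the entire argument is a textbook calculation, and the only mild point is the numerical verification that the chosen $t=\ln 2$ makes the exponent smaller than $-n\mu/8$. Since this is exactly Theorem 4.5 of \cite{MU05-book}, one could also simply cite it, but the short derivation above is self-contained.
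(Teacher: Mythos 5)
Your proof is correct and is the standard moment-generating-function derivation of a lower-tail Chernoff bound. Note, though, that the paper does not prove this statement at all: it explicitly cites Theorem 4.5 of Mitzenmacher and Upfal (\cite{MU05-book}) and uses it as a black box, so there is no ``paper proof'' to compare against. Your self-contained derivation (Markov on $\exp(-t\sum X_i)$, independence, $1+x\le e^x$, optimize at $t=\ln 2$, and check $1-\ln 2 > 1/4$) is exactly what one would do to verify the cited bound, and the final strict inequality holds since $\frac{1}{2}(1-\ln 2) > \frac{1}{8}$ whenever $n\mu>0$, with the degenerate case $\mu=0$ giving $0<1$ trivially.
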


\subsection{Facts from algebra}\label{sec:res}

A notation that will convenient to use is that of a radical ideal. In this work we only consider the ring of polynomials $\C[\vx]$. An ideal $I \subseteq \C[\vx]$ is an abelian subgroup that is closed under multiplication by ring elements. We will denote with $(Q_1,Q_2)$ the ideal generated by two polynomials $Q_1$ and $Q_2$. I.e. $(Q_1,Q_2) = Q_1 \cdot \C[\vx] + Q_2\cdot \C[\vx]$. The radical of an ideal $I$, denoted $\sqrt{I}$, is the set of all ring elements $f$ satisfying that for some natural number $m$, $f^m\in I$. Hilbert's Nullstellensatz implies that if a polynomial $Q$ vanishes whenever $Q_1$ and $Q_2$ vanish then $Q\in\sqrt{(Q_1,Q_2)}$ (see e.g. \cite{CLO}). We shall often use the notation $Q\in\sqrt{(Q_1,Q_2)}$ to denote this vanishing condition.

A tool that will play an important role in the proof of \autoref{thm:structure-intro} is the resultant of two polynomials. As we only consider quadratic polynomials in this paper we restrict our attention to resultants of such polynomials. Let $F,G\in\C[\vx]$ be quadratic polynomials. View $F,G$ as polynomials in $x_1$ over $\C(x_2,\ldots,x_n)$. I.e.
$$F = \alpha x_1^2 + ax_1 + F_0 \quad \text{ and } G = \beta x_1^2 + bx_1 + G_0 \;.$$
Then, the resultant of $F$ and $G$ with respect to $x_1$ is the determinant of their Sylvester matrix
$$
\res_{x_1}(F,G) \eqdef 
\left| \begin{bmatrix} F_0 & 0 & G_0 & 0 \\
a & F_0 & b & G_0 \\
\alpha & a & \beta & b\\
0 & \alpha & 0 &\beta
\end{bmatrix}\right| \;.
$$
A useful fact is that if the resultant of $F$ and $G$ vanishes then they share a common factor. 

\begin{theorem}[See e.g. Proposition 8 in $\mathsection 5$ of Chapter 3 in \cite{CLO}]\label{thm:res}
Given $F,G\in\F[x]$ of positive degree, the resultant $\res_x(F,G)$ is an integer polynomial in the coefficients of $F,G$. Furthermore, $F$ and $G$ have a common factor in $\F[x]$ if and only if $\res_x(F,G) = 0$.
\end{theorem}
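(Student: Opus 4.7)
The plan is to carry out the standard linear-algebraic proof of this textbook result; I would split it into the two claims.

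For the ``integer polynomial'' statement I would simply apply the Leibniz expansion $\det(M)=\sum_{\sigma}\mathrm{sgn}(\sigma)\prod_i M_{i,\sigma(i)}$ to the Sylvester matrix. Every entry is either $0$ or a coefficient of $F$ or $G$, and each monomial in the expansion carries a sign $\pm 1$, so the result is manifestly a polynomial with integer coefficients in the coefficients of $F$ and $G$.

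For the common-factor equivalence, let $n=\deg F$ and $m=\deg G$. My plan is to first isolate the classical reduction as an intermediate step: $F$ and $G$ share a factor of positive degree in $\F[x]$ if and only if there exist $A,B\in\F[x]$, not both zero, with $\deg A<m$ and $\deg B<n$, such that $AF+BG=0$. The forward direction I would handle by writing $F=HF_1$ and $G=HG_1$ for a nontrivial common factor $H$ and taking $A=G_1$, $B=-F_1$; the degree bounds are immediate. The converse I would argue by contraposition: if $\gcd(F,G)=1$ and $AF=-BG$ with the stated degree bounds, then $F$ must divide $B$, which together with $\deg B<\deg F$ forces $B=0$, and then $AF=0$ in the integral domain $\F[x]$ forces $A=0$, contradicting the assumption that they are not both zero.

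The last step is to rewrite $AF+BG=0$ as a homogeneous linear system over $\F$. I would set $A=\sum_{i=0}^{m-1}a_i x^i$ and $B=\sum_{j=0}^{n-1}b_j x^j$ and collect coefficients of $x^0,x^1,\ldots,x^{n+m-1}$; this produces a homogeneous system of $n+m$ equations in the $n+m$ unknowns $(a_0,\ldots,a_{m-1},b_0,\ldots,b_{n-1})$ whose coefficient matrix is precisely the Sylvester matrix, as already illustrated by the $4\times 4$ display in the paper for $n=m=2$. A nontrivial solution therefore exists if and only if $\res_x(F,G)=0$, which combined with the reduction above yields the claim.

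The only place that requires care is checking that the coefficient matrix of the linear system really coincides, in general, with the Sylvester matrix as defined (correct ordering of the coefficients and the right placement of zeros). This is pure bookkeeping and already witnessed in the $4\times 4$ case written out in the paper, so I do not expect any genuine obstacle.
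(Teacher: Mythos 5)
Your proposal is correct and is the standard textbook proof of this classical fact, but note that the paper itself offers no proof of \autoref{thm:res} — it imports the result directly from Cox--Little--O'Shea (Proposition 8 in \S 5 of Chapter 3), so there is no internal argument to compare against.

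That said, your argument is sound. The Leibniz expansion handles the ``integer polynomial'' clause cleanly. For the equivalence, the reduction to the existence of a nonzero pair $(A,B)$ with $\deg A<m$, $\deg B<n$, $AF+BG=0$ is correct: the forward direction via $A=G_1$, $B=-F_1$ works because $G$ has positive degree (so $A\neq 0$), and the converse correctly uses that $\F[x]$ is a UFD so $\gcd(F,G)=1$ together with $F\mid BG$ forces $F\mid B$, and the degree bound then kills $B$ (and hence $A$). The final bookkeeping step is fine; in fact for $n=m=2$ the coefficient matrix you describe, with columns indexed by $(a_0,a_1,b_0,b_1)$ and rows by $x^0,\ldots,x^3$, is exactly the $4\times4$ Sylvester matrix displayed in the paper. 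The only implicit hypotheses worth making explicit are that the leading coefficients of $F$ and $G$ are nonzero (so the Sylvester matrix has the right size) and that ``common factor'' means a common factor of positive degree; both are standard and match the statement's ``positive degree'' hypothesis.
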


Finally, we shall define the rank of a quadratic polynomial as follows.

\begin{definition}\label{def:rank-s}
For a quadratic polynomial we denote with $\rank_s(Q)$ the minimal $r$ such that there are $2r$  linear functions $\{\ell_i\}_{i=1}^{2r}$ satisfying $Q=\sum_{i=1}^r \ell_{2i}\cdot \ell_{2i-1}$. We call such a representation a \emph{minimal representation} of $Q$.
\end{definition}

This is a slightly different definition than the usual way one defines rank of quadratic forms, but it is more suitable for our needs. We note that a quadratic $Q$ is irreducible if and only if $\rank_s(Q)>1$. The next claim shows that a minimal representation is unique in the sense that the space spanned by the linear functions in it is unique. 

\begin{claim}\label{cla:irr-quad-span}
Let $Q$ be an irreducible quadratic polynomial with $\rank_s(Q)=r$. Let $Q=\sum_{i=1}^{r}a_{2i-1}\cdot a_{2i}$ and $Q = \sum_{i=1}^{r}b_{2i-1}\cdot b_{2i}$ be two different minimal representations of $Q$. Then $\spn\{a_i\} = \spn\{b_i\}$.
\end{claim}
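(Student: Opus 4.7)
The plan is to identify $\spn\{a_i\}$ in any minimal representation with an intrinsically defined subspace of linear forms attached to $Q$, namely the \emph{essential variable space} $V_Q$. Writing $Q(\vx) = \vx^T M \vx$ for the unique symmetric matrix $M$, I take $V_Q$ to be the column span of $M$, equivalently $V_Q = \spn\{\partial Q/\partial x_j : j \in [n]\}$. This subspace depends only on $Q$, and its dimension $s = \rank(M)$ is the usual symmetric rank of the form. Since we are over $\C$, the form $Q$ can be diagonalized to $\sum_{j=1}^s x_j^2$; grouping terms in pairs via $x^2 + y^2 = (x+iy)(x-iy)$ shows $\rank_s(Q) = \lceil s/2 \rceil$, and so $\rank_s(Q) = r$ forces $s \in \{2r-1,\, 2r\}$.

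The first easy step is the inclusion $V_Q \subseteq \spn\{a_i\}$ for \emph{any} representation $Q = \sum_{i=1}^r a_{2i-1} a_{2i}$: differentiating the product representation, every $\partial Q/\partial x_j$ is a $\C$-linear combination of $a_1,\ldots,a_{2r}$. Next I bound $\dim\spn\{a_i\}$ from below in a \emph{minimal} representation. If $\dim \spn\{a_i\} \le 2r-2$, then after a linear change of coordinates that puts $\spn\{a_i\}$ inside $\spn\{x_1,\ldots,x_{2r-2}\}$, the polynomial $Q$ becomes a quadratic in $\le 2r-2$ variables, so its symmetric rank satisfies $s \le 2r-2$. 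This forces $\rank_s(Q) = \lceil s/2 \rceil \le r-1$, contradicting minimality.

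To conclude, I split by parity of $s$. If $s = 2r$, then $V_Q \subseteq \spn\{a_i\}$ and the latter has dimension at most $2r$, so dimension counting forces $\spn\{a_i\} = V_Q$. If $s = 2r-1$, I additionally need to rule out $\dim\spn\{a_i\} = 2r$, because that would make the $a_i$'s linearly independent; changing coordinates so that $a_i = x_i$ for $i \le 2r$ would then present $Q$ as $\sum_{i=1}^r x_{2i-1} x_{2i}$, whose symmetric matrix has rank $2r$, contradicting $s = 2r-1$. Hence $\dim \spn\{a_i\} = 2r-1 = \dim V_Q$, and the inclusion $V_Q \subseteq \spn\{a_i\}$ is again an equality. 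Since this identification $\spn\{a_i\} = V_Q$ holds for every minimal representation, applying it to both $\{a_i\}$ and $\{b_i\}$ gives $\spn\{a_i\} = V_Q = \spn\{b_i\}$.

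The main obstacle, such as it is, is the odd-rank case $s = 2r-1$, where the inclusion $V_Q \subseteq \spn\{a_i\}$ alone is not tight by dimension and one must exploit the parity obstruction: no sum of $r$ products of \emph{independent} linear forms can produce a symmetric form of odd rank. I do not expect to need the hypothesis of irreducibility in any essential way beyond the clean statement $\rank_s(Q) = r$; the argument above works for arbitrary quadratics with $\rank_s(Q) = r \ge 1$.
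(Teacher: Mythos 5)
Your proof is correct, but it takes a genuinely different route from the paper's. The paper gives a one-paragraph contradiction argument: if, say, $a_1 \notin \spn\{b_i\}$, pick coordinates making $a_1$ a coordinate and all $b_i$ functions of the remaining coordinates; then the restriction $Q|_{a_1=0}$ leaves the $b$-representation untouched (so $\rank_s$ stays $r$) but collapses the product $a_1 a_2$ in the $a$-representation (so $\rank_s \le r-1$), a contradiction. That argument is short, needs no matrix algebra, and is essentially field-independent. Your argument instead identifies $\spn\{a_i\}$ with an intrinsic invariant of $Q$ -- the column span $V_Q$ of its symmetric Gram matrix, i.e., the span of the partial derivatives -- via the inclusion $V_Q \subseteq \spn\{a_i\}$, the dimension bound $2r-1 \le \dim\spn\{a_i\} \le 2r$, the formula $\rank_s(Q) = \lceil \rank(M)/2 \rceil$ over $\C$, and a parity obstruction in the odd-rank case. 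This is longer and leans on $\C$ being algebraically closed (for the $\lceil s/2\rceil$ identity), but it buys more: it pins down $\spn\{a_i\}$ as a canonical subspace rather than merely proving two such spans coincide, and along the way shows $\dim\spn\{a_i\} \in \{2r-1, 2r\}$. Your closing remark that irreducibility is inessential is also correct; the paper states the claim for irreducible $Q$ only because that is the case it needs.
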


\begin{proof}
Note that if the statement does not hold then, w.l.o.g., $a_1$ is not contained in the span of the $b_i$'s. This means that when setting $a_1=0$ the $b_i$'s are not affected on the one hand, thus $Q$ remains the same function of the $b_i$'s, and in particular $\rank_s(Q|_{a_1=0})=r$, but on the other hand $\rank_s(Q|_{a_1=0})=r-1$ (when considering its representation with the $a_i$'s), in contradiction.
\end{proof}

\subsection{Robust Sylvester-Gallai theorem}\label{sec:robust-SG}
We will need the following theorem of Dvir et al. \cite{DSW12} that improves on an earlier work of Barak et al. \cite{barak2013fractional}.

We say that the points $v_1,\ldots,v_m$ in  $\C^d$ form a $\delta$-SG configuration if for every $i \in [m]$ there exists at least $\delta m$ values of $j \in [m]$ such that the line through $v_i, v_j$ contains a third point in the set. 

\begin{theorem}[Theorem 1.9 of \cite{DSW12}]\label{thm:bdwy}\sloppy
If $v_1,\ldots,v_m\in \C^d$ is a $\delta$-SG configuration then $\dim(\spn\{v_1,...,v_m\}) \leq 12/\delta$.
\end{theorem}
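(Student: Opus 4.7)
The plan is to follow the design-matrix approach of Barak--Dvir--Wigderson--Yehudayoff, adapted to the complex field as in \cite{DSW12}. I would encode the collinearity data as a matrix equation $AV = 0$ and then reduce the desired dimension bound to a rank lower bound on $A$. Specifically, arrange the points as rows of $V \in \C^{m \times d}$, so that $\dim(\spn\{v_1,\ldots,v_m\}) = \rank(V)$. For each $i \in [m]$ and each $j$ in a witness set $S_i$ of size $\geq \delta m$ (for which the line through $v_i, v_j$ contains a third point $v_{k(i,j)}$), record the linear dependence $v_{k(i,j)} - \alpha_{ij} v_i - \beta_{ij} v_j = 0$ as a row of $A$: with entry $-\alpha_{ij}$ in column $i$, $-\beta_{ij}$ in column $j$, and $+1$ in column $k(i,j)$, and zero elsewhere. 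By construction $AV = 0$, so $\rank(V) \leq m - \rank(A)$, and it suffices to show $\rank(A) \geq m - 12/\delta$.

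Next I would verify that $A$ has the structure of a \emph{design matrix} in the sense of \cite{barak2013fractional,DSW12}: each row has exactly three nonzero entries; each column $i$ has at least $\delta m$ nonzero rows (coming from the rows indexed by $(i,j)$ with $j \in S_i$); and any two distinct columns $i \neq i'$ share at most a bounded number of common nonzero rows. The last bound follows by analyzing which rows can be simultaneously nonzero in columns $i$ and $i'$: the triple $(i_0, j_0, k(i_0,j_0))$ indexing such a row must contain both $i$ and $i'$, and once we fix which two of the three roles are taken by $i$ and $i'$, the remaining role is essentially pinned down by the unique line through $v_i$ and $v_{i'}$, giving $O(1)$ shared rows per pair.

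The main obstacle, and the technical heart of the argument, is a rank lower bound for complex design matrices of the form $\rank(A) \geq m - O(q\,t\,m/k)$. Over $\mathbb{R}$ this is obtained via a matrix-scaling plus trace-positivity argument, but over $\C$ positivity fails for the off-diagonal entries of $A^*A$, which is precisely the difficulty overcome in \cite{DSW12}. My strategy would be: (i) produce a positive diagonal scaling $D$ via a complex analogue of a Sinkhorn/Barthe existence theorem so that $DA$ has balanced row and column norms; (ii) write $(DA)^*(DA) = \Lambda + E$, where $\Lambda$ captures the column weights (bounded below because $k \geq \delta m$) and $E$ is an off-diagonal correction whose operator norm is governed by the pairwise column overlap $t = O(1)$; (iii) convert the resulting lower bound on the smallest eigenvalues of $(DA)^*(DA)$ into the desired rank bound via a trace/Rayleigh-quotient inequality. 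Plugging in $q = 3$, $k \geq \delta m$, and $t = O(1)$, and tracking constants carefully, recovers the stated bound $12/\delta$; the matrix-scaling existence over $\C$ is the non-trivial ingredient that distinguishes this from the real case and is where I expect the bulk of the technical work to live.
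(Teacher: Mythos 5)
The paper does not prove this statement at all: it is quoted verbatim as Theorem~1.9 of \cite{DSW12} and used as an imported black box (see \autoref{sec:robust-SG}), so there is no in-paper argument for your sketch to be compared against. If your goal was to reconstruct the proof from \cite{barak2013fractional,DSW12}, the high-level skeleton you give is the right one: encode the collinearity witnesses as rows of a $3$-sparse matrix $A$ with $AV=0$, check the design-matrix conditions (row sparsity $q=3$, column weight $k\ge\delta m$, bounded pairwise column intersection $t$), and invoke a rank lower bound for design matrices to get $\rank(V)\le m-\rank(A)=O(q\,t\,m/k)$.

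Two cautions about the details, since you present them as the ``technical heart.'' First, your diagnosis of the real-versus-complex difficulty is off: $A^{*}A$ is Hermitian positive semidefinite over $\C$ just as over $\R$, so ``positivity fails for the off-diagonal entries'' is not the obstruction; what one actually needs is an upper bound on the \emph{magnitude} of the off-diagonal entries after scaling, and that comes from the combinatorial overlap parameter $t$, not from a sign condition. The scaling theorem itself is about the nonnegative matrix of squared moduli and is field-agnostic. Second, the specific constant $12/\delta$ is precisely where \cite{DSW12} improves on \cite{barak2013fractional}; it does not fall out of ``tracking constants carefully'' in a generic trace argument, and a reconstruction that waves at this step has not actually reproduced the cited result. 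None of this affects the present paper, which simply cites the theorem; but if you intend this as a stand-alone proof of the stated bound it is incomplete at exactly the point you flag as the bulk of the work.
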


An easy consequence of the theorem is the following.

\begin{corollary}\label{cor:bdwy}
Let $0<\delta < 1$.
Assume $v_0, v_1,\ldots,v_m\in \C^d$ are such that for every $i \in [m]$ there exists at least $\delta m$ values of $j \in [m]$ such that the line through $v_i, v_j$ contains a third point in the set (i.e. the condition holds for all the points except, possibly, $v_0$). Then $\dim{v_0,v_1,...,v_m} < 50/\delta$.
\end{corollary}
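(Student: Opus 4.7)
The plan is to reduce \autoref{cor:bdwy} directly to \autoref{thm:bdwy} by showing that the subset $\{v_1, \ldots, v_m\}$ is itself a $(\delta/2)$-SG configuration (in its own right, without using $v_0$ as a third point). Granting this, \autoref{thm:bdwy} yields $\dim(\spn\{v_1, \ldots, v_m\}) \leq 24/\delta$, and adjoining $v_0$ can only raise the dimension by one, producing $\dim \leq 24/\delta + 1 < 50/\delta$ since $\delta < 1$. The trivially small case $m \leq 4/\delta$ is covered by the bound $\dim \leq m+1 < 50/\delta$, so I may assume $m$ is large compared to $1/\delta$.

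To establish the $(\delta/2)$-SG property at a fixed $v_i$, I would start from the $\geq \delta m$ indices $j \in [m]\setminus\{i\}$ for which the line $v_iv_j$ carries a third point in $\{v_0, v_1, \ldots, v_m\}$, and split them according to whether a witnessing third point can be chosen in $\{v_1, \ldots, v_m\}$ or only as $v_0$. If the first subset already has size $\geq \delta m/2$, then the $(\delta/2)$-SG condition at $v_i$ is witnessed immediately. Otherwise at least $\delta m/2$ of the $v_j$'s lie on the unique line $L$ through $v_i$ and $v_0$.

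The key observation for that second case is that these points are self-witnessing: for any such $j$, any other index $j' \neq j$ in the same collection provides a third point $v_{j'} \in \{v_1, \ldots, v_m\}$ on the line $v_iv_j = L$. Since this collection has size $\geq \delta m/2 \geq 2$, this again delivers $\geq \delta m/2$ valid pairs at $v_i$, establishing the $(\delta/2)$-SG condition.

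I do not foresee any substantive obstacle. The single conceptual move is the observation that overuse of $v_0$ as a third point is self-correcting: it forces many $v_j$'s to concentrate on a single line through $v_i$ and $v_0$, which automatically produces enough in-set collinear triples to make the reduction go through. Everything else is routine constant-tracking.
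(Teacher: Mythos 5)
Your proposal is correct, and it follows the same strategy as the paper: reduce to \autoref{thm:bdwy} by establishing a $(\delta/2)$-SG property, using the observation that points whose only in-set collinear witness is $v_0$ must all lie on the line through $v_0$ and $v_i$. The paper phrases this as a dichotomy (``either $v_0,\ldots,v_m$ is a $\delta/2$-SG configuration or $v_1,\ldots,v_m$ is''), whereas you observe, slightly more sharply, that the self-witnessing of collinear points makes $\{v_1,\ldots,v_m\}$ a $(\delta/2)$-SG configuration unconditionally once $m > 4/\delta$; this refinement is sound, avoids the case split, and lands comfortably inside the stated $50/\delta$ bound.
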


\begin{proof}
The only way that $v_0, v_1,\ldots,v_m$ fail to be a $\delta$-SG configuration is if $v_0$ does not satisfy the condition. By considering all pairs $(v_i,v_j)$ that lie on a line with $v_0$ we conclude that either $v_0, v_1,\ldots,v_m$ is a $\frac{\delta}{2}$-SG configuration or $v_1,\ldots,v_m$ is. 
In any case, by \autoref{thm:bdwy}, we get that  $\dim{v_1,...,v_m} \leq 48/\delta$ and the total dimension is at most $50/\delta$. 
\end{proof}

\begin{remark}\label{rem:span-pass}
In our application we will have that the span of two points contains a third point. This does not change the theorems much as by picking a random subspace $H$, of codimension $1$, and replacing each point $p$ with $H\cap \spn\{p\}$ we get that $p_3\in\spn\{p_1,p_2\}$ iff $H\cap \spn\{p_3\}$ is on the line passing through $H\cap \spn\{p_1\}$ and $H\cap \spn\{p_2\}$.
\end{remark}

\section{Robust Edelstein-Kelly theorems}\label{sec:robust-EK}

In this section we prove \autoref{thm:EK-robust} as well as some extensions of it, which give robust versions of the following theorem of Edelstein and Kelly \cite{EdelsteinKelly66}.

\begin{theorem}[Theorem 3 of \cite{EdelsteinKelly66}]\label{thm:EK}
Let $\cT_i$, for $i\in [3]$, be disjoint finite subsets of $\C^n$ such that for every $i\neq j$ and any two points $p_1\in \cT_i$ and $p_2 \in \cT_j$ there exists a point $p_3$ in the third set that is on the line passing through $p_1$ and $p_2$. Then, any such $\cT_i$ satisfy that $\dim(\spn\{\cup_i \cT_i\})\leq 3$.
\end{theorem}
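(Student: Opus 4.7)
My plan is to reduce the colored statement to Kelly's complex Sylvester-Gallai theorem, which asserts that any finite SG configuration in $\C^n$ has linear span of dimension at most $3$. The strategy is to upgrade the cross-color hypothesis on $\cT_1, \cT_2, \cT_3$ into an uncolored SG condition on $S = \cT_1 \cup \cT_2 \cup \cT_3$; Kelly's theorem then yields the bound immediately. Cross-color pairs already satisfy the SG condition by assumption, so only same-color pairs need to be handled.

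Fix $p_1, p_2 \in \cT_1$ (the arguments for $\cT_2, \cT_3$ are symmetric), and assume both other sets are nonempty, since otherwise the hypothesis reduces to the trivial two-set configuration with span dimension at most $2$. Choose a witness $q \in \cT_2$; by hypothesis there exist $s_1, s_2 \in \cT_3$ with $s_i$ on the line $\overline{p_i q}$. The five points $p_1, p_2, q, s_1, s_2$ lie in a common complex plane $\pi \subset \C^n$, and the lines $\overline{p_1 p_2}$ and $\overline{s_1 s_2}$ meet in $\pi$ at some point $u$. I would then iteratively apply the EK hypothesis to the cross-color pairs drawn from $\pi$ (for instance $(s_1, p_2)$ and $(s_2, p_1)$, which produce further points of $\cT_2$ inside $\pi$), building up a planar subconfiguration of $S \cap \pi$ whose incidences eventually force a third point of $S$ to lie on the line $\overline{p_1 p_2}$ via a projective incidence analysis.

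Once every pair in $S$ has a collinear third point in $S$, Kelly's complex Sylvester-Gallai theorem yields $\dim(\spn S) \leq 3$, as required.

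The main obstacle is the planar incidence step: over $\C$ we lack the order-theoretic and convex-hull tools that drive the original Edelstein-Kelly argument in $\R^n$, so extracting the needed collinearity on $\overline{p_1 p_2}$ purely from projective incidences inside $\pi$ requires care and a small case analysis on degeneracies (for example, when $u$ coincides with one of the existing points or when certain cross-color lines coincide). A softer fallback is to invoke the robust EK theorem (\autoref{thm:EK-robust}) to obtain only the weaker bound $\dim(\spn S) = O(1)$, and then sharpen the constant to $3$ by a Kelly-style argument applied inside the resulting constant-dimensional ambient span.
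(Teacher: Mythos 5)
The paper does not prove this statement at all: \autoref{thm:EK} is quoted directly as Theorem 3 of \cite{EdelsteinKelly66} and used as a black box, so there is no in-paper proof to compare your argument against. What you have written is therefore a proposal for a proof that the paper deliberately chose not to give.

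The proposal as it stands has a genuine gap, and you have flagged it yourself. Your whole strategy rests on the lemma that every three-colored EK configuration is automatically an uncolored SG configuration — that is, that any \emph{same-color} pair $p_1,p_2\in\cT_1$ already has a third point of $S=\cT_1\cup\cT_2\cup\cT_3$ on $\overline{p_1p_2}$. The construction you sketch (pick $q\in\cT_2$, obtain $s_1,s_2\in\cT_3$, work in the plane $\pi = \spn\{p_1,p_2,q\}$, generate further $\cT_2$- and $\cT_3$-points inside $\pi$) gives a complete-quadrilateral picture, but nothing in that picture forces a point of $S$ onto $\overline{p_1p_2}$: each new application of the EK hypothesis only produces points on the \emph{sides} and the two already-used diagonals of the quadrilateral, never on the third diagonal $\overline{p_1p_2}$. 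Over $\R$ the Edelstein--Kelly argument gets unstuck precisely by using betweenness and extremal-point arguments, which you correctly note are unavailable over $\C$. You offer no replacement, only the hope that "a projective incidence analysis" will close the gap. Without that, the reduction to Kelly's theorem does not go through, and it is not even clear that the lemma "EK $\Rightarrow$ SG" is true over $\C$.

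The proposed fallback is also not a proof. Invoking \autoref{thm:EK-robust} with $\delta=1$ only yields $\dim(\spn S)=O(1)$ (and the constant in that theorem is far larger than $3$); "sharpening the constant to $3$ by a Kelly-style argument applied inside the constant-dimensional ambient span" is again a statement of intent, not an argument. Kelly's theorem bounds the dimension for SG configurations, which is exactly the condition you have not verified here, so restricting to a constant-dimensional ambient space does not by itself produce the bound. If you want to actually prove \autoref{thm:EK}, you should either fill in the planar incidence step with an explicit argument, or follow the route of the original Edelstein--Kelly paper (and, for the complex case, the relevant Kelly-type machinery) rather than trying to deduce SG from EK.
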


We would be interested in the case where the requirement in the theorem holds with some positive probability.
We say that the sets $\cT_1,\cT_2,\cT_3\subset \C^n$ form a $\delta$-EK configuration if for every $i \in [3]$ and $p\in \cT_i$, for every $j\in[3]\setminus\{i\}$ at least $\delta$ fraction of the points $p_j\in\cT_j$ satisfy that  $p$ and $p_j$ span some point in the third set.\footnote{Note that here we use the notion of span rather than a line passing through points. However, as noted in \autoref{rem:span-pass}, this does not make any real difference.} To ease the reading we state again \autoref{thm:EK-robust}.


\begin{theorem*}[\autoref{thm:EK-robust}]
Let $0<\delta \leq 1$ be any constant. Let $\cT_1,\cT_2,\cT_3\subset\C^n$ be disjoint finite subsets that form a $\delta$-EK configuration.  Then $\dim(\spn\{\cup_i \cT_i\}) \leq O(1/\delta^3)$.
\end{theorem*}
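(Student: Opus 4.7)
The plan is to reduce to the robust Sylvester-Gallai theorem \autoref{thm:bdwy}, with the principal difficulty being configurations where the three sets have very different cardinalities. Assume without loss of generality that $|\cT_1|\le|\cT_2|\le|\cT_3|$.

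In the balanced regime, where the sizes are within a constant factor, i.e.\ $|\cT_3|\le C(|\cT_1|+|\cT_2|)$ for some absolute constant $C$, the union $\cT:=\cT_1\cup\cT_2\cup\cT_3$ is already an $\Omega(\delta)$-SG configuration: for any $p\in\cT_i$ and $j\neq i$, the $\delta$-EK property supplies $\delta|\cT_j|=\Omega(\delta|\cT|)$ partners $q\in\cT_j$ whose span with $p$ contains a point of the third set, which lies in $\cT$. Together with \autoref{rem:span-pass}, \autoref{thm:bdwy} yields $\dim\spn\{\cT\}=O(1/\delta)$.

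For the unbalanced regime I use two ingredients. The \emph{structural reduction} observes that whenever $|\cT_2|\ge 1/\delta$, the $\delta$-EK property applied at every $p\in\cT_3$ (with $j=2$) produces at least one partner $r\in\cT_2$, which in turn yields some $q\in\cT_1$ collinear with $p$ and $r$; hence $p\in\spn\{q,r\}\subseteq V:=\spn\{\cT_1\cup\cT_2\}$, so $\cT_3\subseteq V$ and it suffices to bound $\dim V$. The \emph{sampling argument} then sets $s:=|\cT_1|+|\cT_2|$ and selects $\cT_3'\subseteq\cT_3$ uniformly at random with $|\cT_3'|=s$; let $\cT':=\cT_1\cup\cT_2\cup\cT_3'$. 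For each $p\in\cT_1\cup\cT_2$, the $\delta|\cT_3|$ EK-partners of $p$ in $\cT_3$ are retained in the sample with probability $s/|\cT_3|$, so by \autoref{thm:chernoff} and a union bound over the $O(s)$ points, every such $p$ retains $\Omega(\delta s)=\Omega(\delta|\cT'|)$ partners in $\cT_3'$, whose EK-spanned third points sit in $\cT_1\cup\cT_2\subseteq\cT'$ automatically. For each $p\in\cT_3'$, the $\delta s$ EK-partners in $\cT_1\cup\cT_2$ already satisfy the SG condition with their spanned points in the opposite small set. Thus $\cT'$ is an $\Omega(\delta)$-SG configuration, \autoref{thm:bdwy} bounds $\dim V\le\dim\spn\{\cT'\}=O(1/\delta)$, and the structural reduction completes this main case.

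Two edge regimes remain. The Chernoff step requires $\delta s\gtrsim\log s$, i.e.\ $s$ above a threshold of order $(\log 1/\delta)/\delta$; for smaller $s$ one has $\dim V\le 2s$ trivially and the structural reduction still closes the argument. When $|\cT_2|<1/\delta$ the structural reduction itself fails: $\cT_1\cup\cT_2$ spans a low-dimensional $V$ by cardinality alone, but individual points of $\cT_3$ need not lie in $V$. This is handled by a covering argument exploiting that the $\delta$-EK conditions from $q\in\cT_1$ and $q\in\cT_2$ into $\cT_3$ concentrate points of $\cT_3$ onto a bounded (in $\delta$) family of lines inside $V$, yielding the additional factors producing the stated $O(1/\delta^3)$ bound rather than $O(1/\delta)$. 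The main technical obstacle is the uniform Chernoff concentration across all $O(s)$ sample-dependent events, and the coherent coordination of the sampling, the structural reduction, and the small-set covering into a single bound that works uniformly over all size regimes.
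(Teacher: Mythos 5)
Your balanced case and sampling argument reproduce the paper's Case~1 of the proof of \autoref{thm:EK-robust} (after relabeling: your $\cT_3$ is the paper's $\cT_1$): sample the largest set down to the combined size of the other two, apply \autoref{thm:bdwy} to the union of the sample with the two smaller sets, and then place the original large set inside the resulting low-dimensional space via the observation that every point of it lies in the span of one point from each smaller set. Two remarks on this part: the paper samples each element independently with a fixed probability, so \autoref{thm:chernoff} applies directly, whereas you take a uniform fixed-size subset, which requires a Chernoff variant for sampling without replacement; and you correctly flag that the union bound over the $O(s)$ points needs $\delta s\gtrsim\log s$.

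The genuine gap is your final regime, and it rests on a false premise. You claim that when $|\cT_2|<1/\delta$ the structural reduction fails because points of $\cT_3$ need not lie in $V:=\spn\{\cT_1\cup\cT_2\}$. That is not so. For any nonempty $\cT_2$ and any $p\in\cT_3$, the $\delta$-EK condition with $j=2$ guarantees at least $\delta|\cT_2|>0$ partners in $\cT_2$; since the count is an integer, there is at least one such $r\in\cT_2$, giving $q\in\cT_1\cap\spn\{p,r\}$ with $q=\alpha p+\beta r$ and $\alpha\neq 0$ by pairwise independence, hence $p\in\spn\{q,r\}\subseteq V$. So $\cT_3\subseteq V$ holds for every $\delta>0$ regardless of $|\cT_2|$, and the small-$s$ regime is closed by the trivial bound $\dim V\leq |\cT_1|+|\cT_2|$ — no covering argument is needed. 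Your sketch of a ``bounded family of lines inside $V$'' is therefore superfluous and, as written, inconsistent with the sentence preceding it (which asserts $\cT_3\not\subseteq V$). It also does not correspond to what the paper actually does in its Case~2: there the distinguishing regime is $m_3\leq m_1^{1/3}$ (the smallest set is tiny relative to the largest) and the argument is an iterative covering of the largest set by the two-dimensional spaces $V(p,q)$, which is the source of the $\delta^{-3}$ in the stated bound. With the misreading corrected, your route bypasses that covering argument entirely and yields roughly $O(\delta^{-1}\log\delta^{-1})$, a genuine improvement over the stated bound that you should make explicit.
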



\begin{proof}[Proof of \autoref{thm:EK-robust}]
Denote $|\cT_i|=m_i$. Assume w.l.o.g. that $|\cT_1| \geq   |\cT_2| \geq |\cT_3|$. The proof distinguishes two cases. The first is when  $|\cT_3|$ is not too small and the second case is when it is much smaller than the largest set. 

\begin{enumerate}
\item {\bf Case $m_3 > m_1^{1/3}$: } \hfill

Let $\cT'_1\subset \cT_1$ be a random subset, where each element is samples with probability $m_2/m_1 = |\cT_2|/|\cT_1$. By the Chernoff bound (\autoref{thm:chernoff}) we get that, w.h.p., the size of the set is at most, say, $2m_2$. Further, the Chernoff bound also implies that for every $p\in \cT_2$ there are at least $(\delta/2)\cdot m_2$ points in $\cT'_1$ that together with $p$ span a point in $\cT_3$. Similarly, for every $p\in \cT_3$ there are at least $(\delta/2)\cdot m_2$ points in $\cT'_1$ that together with $p$ span a point in $\cT_2$. Clearly, we also have that for every point $p\in\cT'_1$ there are $\delta m_2$ points in $\cT_2$ that together with $p$ span a point in $\cT_3$. Thus, the set $\cT'_1\cup \cT_2\cup \cT_3$ is a $(\delta/8)$-SG configuration and hence has dimension $O(1/\delta)$ by \autoref{thm:bdwy}. 

Let $V$ be a subspace of dimension $O(1/\delta)$ containing all these points. Note that in particular, $\cT_2,\cT_3\subset V$. As every point $p\in \cT_1$ is a linear combination of points in $\cT_2\cup\cT_3$ it follows that the whole set has dimension $O(1/\delta)$.
\end{enumerate}

%

\begin{enumerate}
\item {\bf Case $  m_3  \leq m_1^{1/3}$:}\hfill

In this case we may not be able to use the sampling approach from earlier as $m_2$ can be too small and the Chernoff argument from above will not hold. 

We say that a point $p_1\in \cT_1$ is a neighbor of a point $p\in \cT_2\cup \cT_3$ if the space spaned by $p$ and $p_1$ intersects the third set. Denote with  $\Gamma_1(p)$ the neighborhood of  a point $p\in\cT_2\cup\cT_3$ in $\cT_1$.

\sloppy
Every two points $p\in\cT_2$  and $q\in\cT_3$ define a two-dimensional space that we denote  $V(p,q)=\spn\{p,q\}$. 

Fix $p\in \cT_2$ and consider those spaces $V(p,q)$ that contain points from $\cT_1$. Clearly there are at most $|\cT_3|$ such spaces. Any two different subspaces $V(p,q_1)$ and $V(p,q_2)$ have intersection of dimension $1$ (it is $\spn\{p\}$) and by the assumption in the theorem the union $\cup_{q\in\cT_3}V(p,q)$ covers at least $\delta m_1$ points of $\cT_1$.  Indeed, $\delta m_1$ points  $q_1\in \cT_1$ span a point in $\cT_3$ together with $p$. As our points are pairwise independent, it is not hard to see that if $q_3 \in \spn\{p,q_1\}$ then $q_1 \in \spn\{p,q_3\}=V(p,q_3)$

For each  subspace $V(p,q)$ consider the set $V(p,q)_1 = V(p,q)  \cap \cT_1$. 

\begin{claim}\label{cla:T1-intersect}
Any two such spaces $V(p,q_1)$ and $V(p,q_2)$ satisfy that either $V(p,q_1)_1= V(p,q_2)_1$ or $V(p,q_1)_1\cap V(p,q_2)_1=\emptyset$. 
\end{claim}

\begin{proof}
If there was a point $p'\in V(p,q_1)_1\cap V(p,q_1)_1$ then both $V(p,q_1)$ and $V(p,q_2)$ would contain $p,p'$ and as $p$ and $p'$ are linearly independent (since they belong to  $\cT_i$'s they are not the same point) that would make $V(p,q_1)=V(p,q_2)$. In particular we get $V(p,q_1)_1= V(p,q_2)_1$.
\end{proof}

As conclusion we see that at most $O(1/\delta^2)$ different spaces $\{V(p,q)\}_q$ have intersection at least $\delta^2/100 \cdot m_1$ with $\cT_1$. Let $\cI$ contain $p$ and a point from each of the sets $\{V(p,q)_1\}$ that have size at least $\delta^2/100 \cdot m_1$. Clearly $|\cI| \leq O(1/\delta^2)$. We now repeat the following process. As long as $\cT_2 \not\subset \spn\{\cI\}$ we pick a point  $p'\in \cT_2 \setminus \spn\{\cI\}$. We add $p'$ to $\cI$ along with a point from each large set $V(p',q)_1$, i.e. subsets satisfying $|V(p',q)_1|\geq \delta^2/100\cdot m_1$, and repeat. 

We next show that this process must terminate after $O(1/\delta)$ steps and that at the end $|\cI| = O(1/\delta^3)$. To show that the process terminates quickly we prove that if $p_k\in \cT_2$ is the point that was picked at the $k$'th step then $|\Gamma_1(p_k)\setminus \cup_{i\in[k-1]} \Gamma_1(p_i)| \geq (\delta/2)m_1$. Thus, every step covers at least $\delta/2$ fraction of new points in $\cT_1$ and thus the process must end after at most $O(1/\delta)$ steps. 

\begin{claim}\label{cla:one-large-intersect}
Let $p_i\in\cT_2$, for $i\in [k-1]$ be the point chosen at the $i$th step. If  the intersection of $V(p_k,q)_1$ with $V(p_i,q')_1$, for any $q,q'\in \cT_3$, has size larger than $1$ then $V(p_k,q)= V(p_i,q')$ (and in particular, $V(p_k,q)_1= V(p_i,q')_1$) and $|V(p_k,q)_1| \leq \delta^2/100 \cdot m_1$.

Moreover, if there is another pair $(q'',q''')\in\cT_3^2$ satisfying  $|V(p_k,q'')_1\cap  V(p_i,q''')_1|> 1$ then it must be the case that $V(p_i,q')=V(p_i,q''')$.
\end{claim}

\begin{proof}
If the intersection of $V(p_k,q)_1$ with $V(p_i,q')_1$ has size at least $2$ then by an argument similar to the proof of \autoref{cla:T1-intersect} we would get that $V(p_k,q) = V(p_i,q')$. To see that in this case the size of $V(p_i,q')_1$ is not too large we note that by our process, if $|V(p_i,q')_1|\geq \delta^2/100 \cdot m_1$ then $\cI$ contains at least two points from $V(p_i,q')_1$. Hence, $p_k\in V(p_i,q')\subset \spn\{\cI\}$ in contradiction to the choice of $p_k$.

To prove the moreover part we note that in the case of large intersection, since $V(p_k,q) = V(p_i,q')$, we have that $p_k,p_i\in V(p_i,q')$. If there was another pair $(q'',q''')$ so that $|V(p_k,q'')_1 \cap V(p_i,q''')_1|>1$ then we would similarly get that $p_k,p_i\in V(p_i,q''')$. By pairwise linear independence of the points in our sets this implies that $V(p_i,q')=V(p_i,q''')$.
\end{proof}



\begin{corollary}\label{cor:neighbor-grow}
Let $i\in[k-1]$ then 
$$|\Gamma_1(p_k)\cap \Gamma_1(p_i)|\leq \delta^2/100 \cdot m_1 + m_3^2.$$ 
\end{corollary}

\begin{proof}
The proof follows immediately from \autoref{cla:one-large-intersect}. Indeed, the claim assures that there is at most one subspace $V(p_k,q)$ that has intersection of size larger than $1$ with any $V(p_i,q')_1$ (and that there is at most one such subspace  $V(p_i,q')$) and that whenever the intersection size is larger than $1$ it is upper bounded by $\delta^2/100 \cdot m_1$. As there are at most $m_3^2$ pairs $(q,q')\in\cT_3^2$ the claim follows.
\end{proof}


The corollary implies that
$$|\Gamma_1(p_k)\cap \left( \cup_{i\in[k-1]} \Gamma_1(p_i)\right) | \leq k((\delta^2/100)m_1 + m_3^2) < (\delta/2)\cdot m_1,$$ 
where the last inequality holds for, say, $k<10/\delta$.\footnote{It is here that we use the fact that we are in the case $  m_3  \leq m_1^{1/3}$.}
As $|\Gamma_1(p_k)|\geq \delta \cdot m_1$, for each $k$, it follows that after $k<10/\delta$ steps 
$$|\cup_{i\in[k]} \Gamma_1(p_i)| > k(\delta/2)m_1.$$
In particular, the process must end after at most $2/\delta$ steps. 

As each steps adds to $\cI$ at most $O(1/\delta^2)$ vectors, at the end we have that $|\cI| = O(1/\delta^3)$ and every $p\in\cT_2$ is in the span of $\cI$. 

Now that we have proved that $\cT_2$ has small dimension we conclude as follows. We find a maximal subset of $\cT_3$ whose neighborhood inside $\cT_1$ are disjoint. As each neighborhood has size at least $ \delta \cdot m_1$ it follows there the size of the subset is at most $O(1/\delta)$. We add those $O(1/\delta)$ points to $\cI$ and let $V=\spn\{\cI\}$. Clearly $\dim(V) = O(1/\delta^3)$.

\begin{claim}
$\cup_i \cT_i \subset V$.
\end{claim}

\begin{proof}
We first note that if $p\in \cT_1$ is in the neighborhood of some $p'\in\cI\cap \cT_3$ then $p\in V$. Indeed, the subspace spanned by $p'$ and $p$ intersects $\cT_2$. I.e. there is $q\in \cT_2$ that is equal to $\alpha p + \beta p'$, where from pairwise independence both $\alpha\neq0$ and $\beta\neq 0$. As both $p'\in V$ and $\cT_2\subset V$ we get that also $p\in V$.

We now have that the neighborhood of every $p\in \cT_3\setminus \cI$ intersects the neighborhood of some $p'\in\cI\cap \cT_3$. Thus, there is some point $q\in \cT_1$ that is in $V$ (by the argument above as it is a neighbor of $p'$) and is also a neighbor of $p$. It follows that also $p\in V$ as the subspace spanned by $q$ and $p$ contains some point in $\cT_2$ and both $\{q\},\cT_2\subset V$ (and we use pairwise independence again). Hence all the points in $\cT_3$ are in $V$. As $\cT_2\cup\cT_3 \subset V$ it follows that also $\cT_1\subset V$.
\end{proof}
This concludes the proof of the case $ m_3  \leq m_1^{1/3}$.
\end{enumerate}
\end{proof}

\begin{remark}
The bound $O(1/\delta^3)$ is probably not tight and we believe that the correct bound should be $O(1/\delta)$ but we did not try to get tight bounds here. The theorem also seems similar in spirit to the results in  \cite{barak2013fractional,DSW12} but as far as we can tell it is not a direct corollary of any of the results there.
\end{remark}

\begin{remark}
While \autoref{thm:EK-robust} speaks about lines through points, a similar conclusion holds when we replace the condition that $p_3$ lies on the line through $p_1$ and $p_2$ with the condition $p_3\in\spn\{p_1,p_2\}$.
\end{remark}
%

Similar to \autoref{cor:bdwy} we have the following variant of \autoref{thm:EK-robust}.
\begin{theorem}\label{cor:EK-robust}
Let $0<\delta \leq 1$ be any constant. Let $\cT_1,\cT_2,\cT_3\subset\C^n$ be disjoint finite subsets. Assume that with the exception of at most $c$ elements from $\cup_{i=1}^{3}\cT_i$ all other elements in  $\cup_{i=1}^{3}\cT_i$  satisfy the $\delta$-EK property. Then $\dim(\spn\{\cup_i \cT_i\}) \leq O_c(1/\delta^3)$.
\end{theorem}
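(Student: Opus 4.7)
The plan is to reduce to \autoref{thm:EK-robust} by removing the exceptional elements while losing only a factor depending on $c$. Let $\cS$ denote the set of at most $c$ exceptional elements, set $\cT_i' = \cT_i \setminus \cS$, and let $W = \spn(\cS)$, so that $\dim W \leq c$; write $\pi\colon \C^n \to \C^n/W$ for the projection. The aim is to show that, possibly after absorbing an additional subspace of dimension $O_c(1/\delta)$ into $W$, the projected sets $\pi(\cT_i' \setminus W)$ form a $(\delta/C)$-EK configuration for some absolute constant $C$. Applying \autoref{thm:EK-robust} then bounds the projected span by $O(1/\delta^3)$, and lifting adds at most $O_c(1/\delta)$, for the desired $O_c(1/\delta^3)$ bound.

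First I would dispose of the case that some $|\cT_i|$ is small, say $|\cT_3| \leq C_0 c/\delta$. Then $V_3 := \spn(\cT_3)$ has dimension $O_c(1/\delta)$, and for every good $p \in \cT_1'$ outside $V_3$ the $\delta$-EK condition with witnesses in $\cT_3 \subseteq V_3$ forces at least $\delta|\cT_2| - c$ elements of $\cT_2$ to project to the line $\spn\{\pi(p)\}$ in $\C^n/V_3$. Two good $p_1, p_2 \in \cT_1'$ with linearly independent projections modulo $V_3$ then contribute to disjoint projective lines in $\pi(\cT_2')$ (up to the intersection with $V_3$), so a standard counting argument bounds the number of independent directions of good $p$'s modulo $V_3$ by $O(1/\delta)$; the symmetric argument for $\cT_2'$ finishes this case.

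In the remaining case $|\cT_i| \geq C_0 c/\delta$ for all $i$, removing $\cS$ from each $\cT_j$ loses only an $O(1/C_0)$ fraction of the EK-witnessed $q$'s. The further obstruction is that a good $q \in \cT_j'$ may have had all of its witnesses inside $\cS \cap \cT_k$, in which case $q \in \cup_{r \in \cS \cap \cT_k}\spn\{p,r\} \subseteq \spn(p) + W$. Call $p \in \cT_i'$ \emph{degenerate} if for some $j \neq i$ more than $\delta|\cT_j|/4$ of $\cT_j$ lies in $\spn(p)+W$. A covering argument analogous to \autoref{cla:one-large-intersect} and \autoref{cor:neighbor-grow}---exploiting that two degenerate $p_1, p_2$ with linearly independent projections modulo $W$ have stuffed regions whose intersection is contained in $\cT_j \cap W$---bounds the span of the degenerate $p$'s modulo $W$ by $O(1/\delta)$. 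Absorbing those directions into an enlarged ambient subspace $V^* \supseteq W$ of dimension $O_c(1/\delta)$, the remaining good points truly satisfy a $(\delta/4)$-EK configuration modulo $V^*$, and \autoref{thm:EK-robust} concludes.

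The main obstacle is the covering argument controlling the degenerate $p$'s, which mirrors the combinatorial heart of the proof of \autoref{thm:EK-robust} but now applied in the quotient to handle the bookkeeping of lost witnesses.
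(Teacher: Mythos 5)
Your quotient approach is quite different from the paper's proof of \autoref{cor:EK-robust}, which simply re-runs the two-case proof of \autoref{thm:EK-robust}: in the sampling case the SG parameter degrades to $\Omega(\delta/2^c)$ by iterating the one-bad-point trick of \autoref{cor:bdwy} $c$ times, and in the covering case one just initializes $\cI$ with the $c$ exceptional points. Your plan is closer in spirit to the paper's more elaborate \autoref{thm:EK-robust-span}, which handles precisely the configuration you set up. But there is a genuine gap at the step where you project to the full quotient $\C^n/W$. Two pairwise independent points $p_1, p_2 \notin W$ with $\spn\{p_1,p_2\} \cap W \neq \{0\}$ become linearly dependent under $\pi$; since $\dim W$ can be as large as $c \geq 2$, arbitrarily many points of a $\cT_i'$ may collapse this way. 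The projected sets therefore need not be pairwise independent, which the proof of \autoref{thm:EK-robust} uses throughout (e.g.\ in \autoref{cla:T1-intersect} and the discussion of $V(p,q)_1$), and de-duplicating collapsed points wrecks the $\delta$-fraction hypothesis.

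The paper's fix, in the proof of \autoref{thm:EK-robust-span}, is to project $W$ only onto a \emph{random one-dimensional} line $W_0 = \spn\{w_0\} \subset W$ (kernel a random codimension-one subspace of $W$): with probability $1$ this preserves pairwise linear independence of all points outside $W$, and witnesses lying in $W$ are not destroyed but become multiples of $w_0$, which is then carried along as a single extra ``bad'' point. This device also sidesteps the difficulty lurking in your degeneracy covering argument: if $|\cT_j \cap W|$ is a non-negligible fraction of $|\cT_j|$, the stuffed regions $(\spn(p)+W)\cap\cT_j$ for degenerate $p$'s that are independent modulo $W$ all share the large common part $\cT_j\cap W$, so the ``new elements covered per step'' count you rely on can fail; under the random-line projection those points simply become indistinguishable copies of $w_0$ and are set aside.
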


\begin{proof}[Sketch]
The proof is similar to the proof of \autoref{thm:EK-robust} so we just explain how to modify it.
\begin{enumerate}
\item {\bf Case  $m_3>m_1^{1/3}$:} Here too we repeat the sampling argument and note, similar to \autoref{cor:bdwy} that the sampled set give rise to an $\Omega(\delta/2^{c})$-SG configuration. Adding the $c$ '``bad'' elements to the subspace $V$ gives a subspace of dimension $O_c(1/\delta)$ spanning $\cT_2\cup\cT_3$. The rest of the proof is the same.

\item {\bf Case  $m_3\geq m_1^{1/3}$:} We repeat the covering argument only now we initiate $\cI$ with the $c$ '``bad'' elements. It is not hard to see that the rest of the proof gives the desired result.
\end{enumerate}
\end{proof}

For the proof of \autoref{thm:main-ek-intro} we would actually need the following extension of the theorem. The extension speaks of a situation where some linear combinations fall into a subspace $W$ and not just to one of the sets.

\begin{theorem}\label{thm:EK-robust-span}
Let $0<\delta \leq 1$ be any constant. 
Let $W\subset\C^n$ be an $r$-dimensional space and let $W_i\subset W$, for $i\in[3]$, be finite subsets of $W$. Let $\cK_1,\cK_2,\cK_3\subset\C^n\setminus W$ be finite subsets. Let $\cT_i=\cK_i\cup W_i$. 
Assume that no two vectors in $\cup_i \cT_i$ are linearly dependent. 

Assume that with the exception of at most $c$ elements from $\cup_{i=1}^{3}\cK_i$ all other elements satisfy the following relaxed EK-property: If $p\in \cK_i$ is not one of the $c$ exceptional points then for every $j\in[3]\setminus \{i\}$, for at least $\delta$ fraction of the points $q\in \cT_j$ the span of $p$ and $q$ contains a point in $\cT_k$, for the third index $k$. 
Then, there exists a linear subspace $V$ of dimension $\dim(V)= O_c(1/\delta^3)$ such that $\spn\{\cup_i \cT_i\} \subseteq W + V $. In particular, 
$\dim(\spn\{\cup_i \cT_i\}) \leq O_c(r+ 1/\delta^3)$.
\end{theorem}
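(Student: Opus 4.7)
The strategy is to reduce to Theorem~\ref{cor:EK-robust} by projecting modulo $W$. Fix a linear complement $U$ of $W$ in $\C^n$, so $\C^n = W \oplus U$, and let $\pi \colon \C^n \to U$ denote the projection along $W$. Since $W_i \subseteq W$ we have $\pi(\cT_i) = \pi(\cK_i) \subseteq U \setminus \{0\}$ and $\spn\{\cup_i \cT_i\} \subseteq W + \spn\{\cup_i \pi(\cK_i)\}$, so it suffices to produce a subspace $V \subseteq U$ of dimension $O_c(1/\delta^3)$ containing every $\pi(\cK_i)$. For each $i$, replace $\pi(\cK_i)$ by a maximal pairwise linearly independent set of direction representatives $\hat{\cK}_i \subseteq U \setminus \{0\}$ (so $\spn \hat{\cK}_i = \spn \pi(\cK_i)$), and reassign any direction appearing in several $\hat{\cK}_i$'s to exactly one of them so that the three sets become pairwise disjoint.

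The core step is to verify that $(\hat{\cK}_1, \hat{\cK}_2, \hat{\cK}_3)$ satisfies the relaxed EK hypothesis of Theorem~\ref{cor:EK-robust} with parameters $\delta' = \Theta(\delta)$ and $c' = O_c(1)$. Fix a non-exceptional $p \in \cK_i$ and $j \neq i$, and consider the $\geq \delta|\cT_j|$ good neighbors $q \in \cT_j$ (with associated $r \in \cT_k$, $r \in \spn\{p,q\}$). The case $q \in W_j, r \in W_k$ is forbidden by pairwise linear independence of $\cup_i \cT_i$, leaving three possibilities: (i) $q \in \cK_j, r \in \cK_k$ (\emph{genuine}), which projects to a bona fide EK-triple $(\pi(p),\pi(q),\pi(r))$ in $U$; (ii) $q \in \cK_j, r \in W_k$, forcing $\pi(q) \parallel \pi(p)$; or (iii) $q \in W_j, r \in \cK_k$, forcing $\pi(r) \parallel \pi(p)$. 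If a $\Theta(\delta)$ fraction of $p$'s good neighbors (for each $j$) are genuine, the relaxed EK condition transfers at $\pi(p)$ with parameter $\Omega(\delta)$, modulo a constant-factor loss from deduplication. Otherwise the direction $[\pi(p)] \in \mathbb{P}(U)$ is heavily shared with $\cK_j$ or $\cK_k$; after the reassignment, such ``degenerate'' directions get absorbed into the exception budget, contributing at most $O_c(1)$ additional exceptions. Applying Theorem~\ref{cor:EK-robust} to $(\hat{\cK}_1, \hat{\cK}_2, \hat{\cK}_3)$ with these parameters yields $\dim \spn\{\cup_i \hat{\cK}_i\} = O_c(1/\delta^3)$, and setting $V := \spn\{\cup_i \hat{\cK}_i\}$ completes the argument.

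\textbf{The main obstacle} is the degenerate regime where most good neighbors of a non-exceptional $p \in \cK_i$ lie in $W_j$, or have their third point in $W_k$. In that case the projection loses genuine EK-triples and only records direction-coincidences between the projected sets. Carefully counting these coincidences against the pairwise linear independence of $\cup_i \cT_i$ and the bound $\dim W = r$, and showing that they can be absorbed into $O_c(1)$ new exceptions in the projected configuration (rather than producing an unbounded family of problematic directions that would defeat the application of Theorem~\ref{cor:EK-robust}), is the technically most delicate step.
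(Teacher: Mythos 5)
Your approach of quotienting out $W$ (projecting onto a complement $U$) diverges from the paper's in a way that creates a genuine gap at exactly the place you flag as ``the technically most delicate step.'' The paper does not kill $W$: it randomly projects $W$ onto a one\nobreakdash-dimensional subspace $W_0 = \spn\{w_0\}$ (a projection whose kernel lies inside $W$) and keeps the representative point $w_0$ in the configuration as one extra exceptional element. This is essential. When a non\nobreakdash-exceptional $p\in\cK_i$ spans a third point $r$ together with some neighbor $q$, and one of $q,r$ lies in $W$ (they cannot both, by pairwise independence), the paper's projection sends that $W$-point to a scalar multiple of $w_0$, so the collinear triple survives with $w_0$ playing the role of the $W$-point; the paper's Claim~\ref{cla:ek-g-large} then shows $p$ has $\Omega(\delta)$ neighbors in the projected configuration, and applies \autoref{cor:EK-robust} with $c+1$ exceptions.

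Under your quotient both of your cases (ii) and (iii) destroy the triple outright: the $W$-point maps to $0$, which is not in any $\hat\cK_i$, and all that is left is a direction coincidence $\pi(q)\parallel\pi(p)$ or $\pi(r)\parallel\pi(p)$. You propose to absorb the resulting ``degenerate'' directions into the exception budget at cost $O_c(1)$, but there is no a priori bound on the number of such directions. Concretely, one can arrange $\cK_1=\{p_1,\dots,p_m\}$ with the $\pi(p_i)$ in pairwise distinct directions, each $p_i$ having essentially all of its good $\cT_2$-neighbors inside $W_2$, with the third point landing in $\cK_3$. For a fixed $p_i$ the third points coming from distinct $q,q'\in W_2$ are distinct (since $\spn\{p_i,q\}\cap\spn\{p_i,q'\}=\spn\{p_i\}$ when $q,q'\in W$ are independent and $p_i\notin W$), and those coming from different $p_i,p_{i'}$ project to distinct directions; this only forces $|\cK_3|\gtrsim m\cdot\delta\,|W_2|$, which is achievable for any $m$. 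All $m$ directions of $\hat\cK_1$ would then be exceptional in the quotient configuration, and \autoref{cor:EK-robust} gives no bound. Nor does your deduplication/reassignment rescue this: moving $[\pi(p_i)]$ out of $\hat\cK_1$ into, say, $\hat\cK_3$ merely relocates the problem, since that representative must then itself satisfy the $\delta'$-EK condition against $\hat\cK_1,\hat\cK_2$, and nothing forces that.

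In short, the quotient discards too much information. The paper's device of collapsing $W$ to a line while retaining a representative $w_0$ is precisely what prevents the loss of EK-triples whose third point lies in $W$, and I do not see how to carry out your plan without restoring some analogue of that representative. (The paper's proof also keeps the $m_3>m_1^{1/3}$ versus $m_3\le m_1^{1/3}$ case split from \autoref{thm:EK-robust}, with a separate covering argument for the latter, but that is secondary; the missing ingredient here is the $W$-representative.)
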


Note that the theorem assumes nothing about the relation between the size of $W_i$ and $\cK_i$. Furthermore, it only asks that points in $\cK_i$ satisfy the spanning property with points from $\cT_j = \cK_j\cup W_j$ and that the spanned point can belong to $\cT_k = \cK_k\cup W_k$ and not just to $\cK_k$.

\begin{proof}
As in the proof of  \autoref{thm:EK-robust} we denote the neighborhood of an element $p\in \cup_i\cK_i$ in $\cT_j$ with $\Gamma_j(p)$. I.e., $q\in \cT_j$ belongs to $\Gamma_j(p)$, for $p\in \cK_i$ ($i\neq j$), if $p$ and $q$ span a point in $\cT_k$ where $k\not\in\{i,j\}$.
Assume w.l.o.g. that $|\cK_1|\geq |\cK_2|\geq |\cK_3|$.  As in the proof of \autoref{thm:EK-robust} we distinguish two cases.

\begin{enumerate}
\item {\bf Case $|\cK_3| > |\cK_1|^{1/3}$:} \hfill \break
Our first step is to project the space $W$ to a random one-dimensional space $W_0=\spn\{w_0\}\subset W$. We do so by projecting $\C^n$ to $\C^{n-r+1}$ in a way that the kernel of the projection is in $W$. Note that if we pick $w_0\in W$ at random (by, say, picking its coefficients uniformly from $[0,1]^r$) then any two vectors from $\cup_i\cK_i$ remain linearly independent with probability $1$.  We also note that this projection does not affect linear dependencies.

We abuse notation and use $\cK_i$ to denote the set $\cK_i$ after the projection.
In contrast to the $\cK_i$'s, all elements from $W$ now become linearly dependent. Thus, if $W_i$ was not empty then we now replace it with the single vector $w_0$.

We now proceed as in the proof of \autoref{thm:EK-robust} and sample a random subset $\cK'_1\subseteq \cK_1$ of size roughly $|\cK_2|$ (i.e. each element of $\cK_1$ is added to the set with probability $|\cK_2|/|\cK_1|$).  We would like to show that the new sets satisfy the conditions of \autoref{cor:EK-robust} with parameter $\delta/4$ and $c+1$ bad polynomials.

\begin{claim}\label{cla:ek-g-large}
$\cK'_1\cup\cK_2\cup\cK_3\cup\{w_0\}$ satisfy the conditions of \autoref{cor:EK-robust} with parameter $\delta/4$ and at most $c+1$ bad polynomials..
\end{claim}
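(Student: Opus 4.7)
The plan is to show that with high probability over the random sample $\cK'_1\subseteq \cK_1$ and the random direction $w_0\in W$, the configuration $\cK'_1\cup \cK_2\cup\cK_3\cup\{w_0\}$ is a $(\delta/4)$-EK configuration once we declare the $c$ originally exceptional points together with $w_0$ itself to be bad. Declaring $w_0$ bad from the outset is precisely what accounts for the ``$+1$'' in the bad-point count: we never attempt to verify the EK condition at $w_0$.

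The first step is an application of the Chernoff bound. Each element of $\cK_1$ is added to $\cK'_1$ independently with probability $|\cK_2|/|\cK_1|$, so $E[|\cK'_1|]=|\cK_2|$; by \autoref{thm:chernoff} we obtain $|\cK_2|/2\leq |\cK'_1|\leq 2|\cK_2|$ with failure probability $e^{-\Omega(|\cK_2|)}$, hence $|\cT'_1|\leq 2|\cK_2|+1$. Since we are in the regime $|\cK_3|>|\cK_1|^{1/3}$ and $|\cK_2|\geq |\cK_3|$, the Chernoff exponent $\Omega(|\cK_2|)$ dominates $\log|\cK_1|$, so a union bound over the $O(|\cK_1|)$ required concentration events succeeds with probability $1-o(1)$.

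The second step is to verify the $(\delta/4)$-EK condition at every non-bad point of the new configuration. For a non-bad $p\in \cK_2\cup \cK_3$, decompose the original neighborhood as $\Gamma_1(p)=\Gamma_{\cK_1}(p)\sqcup \Gamma_{W_1}(p)$. If $|\Gamma_{\cK_1}(p)|\geq (\delta/2)|\cK_1|$, Chernoff on the sample yields at least $(\delta/4)|\cK_2|$ neighbors of $p$ in $\cK'_1$, which is a $(\delta/8)$-fraction of $|\cT'_1|$ and can be sharpened to $\delta/4$ after adjusting the implicit constants. Otherwise $\Gamma_{W_1}(p)\neq\emptyset$; picking any $q\in \Gamma_{W_1}(p)$, the original span $\spn\{p,q\}$ contains some $r\in \cT_3$, and by linearity of the projection $\pi$ we have $\pi(r)\in \spn\{\pi(p),w_0\}\cap \cT'_3$, so $w_0$ becomes a projected neighbor of $p$ in $\cT'_1$. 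A symmetric case split handles a non-bad $p\in \cK'_1$ against $\cT'_2$ and $\cT'_3$, where pairwise linear independence guarantees that the spanned $\cT'_k$-point is nontrivial and survives projection.

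The main obstacle is the pathological subregime where $|W_j|$ is much larger than $|\cK_j|$, so that the bulk of a non-bad $p$'s neighborhood in $\cT_j$ collapses to a single $w_0$ under projection; a lone $w_0$-neighbor alone is too sparse to furnish a $(\delta/4)$-fraction of $\cT'_j$. I plan to resolve this either by tightening the ``$\cK_j$-heavy versus $W_j$-heavy'' case split so that enough $\cK_j$-neighbors always survive, or by arguing via pairwise linear independence and the hypothesis $|\cK_3|>|\cK_1|^{1/3}$ that the set of points for which both subcases fail has size $O(1)$ and can be absorbed into the bad count without breaching $c+1$. Once this bookkeeping is settled, the resulting configuration feeds into \autoref{cor:EK-robust} to complete the proof of the claim.
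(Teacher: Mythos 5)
Your proposal correctly sets up the Chernoff concentration step and the case split on whether a non-bad point's neighborhood in $\cT_j$ is mostly in $\cK_j$ or mostly in $W_j$, and your handling of the $\cK_j$-heavy case matches the paper's. The gap is exactly the one you flag as ``the main obstacle,'' and neither of your two proposed workarounds is how the paper proceeds.

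In the $W_j$-heavy case you only count $w_0$ itself as a single new neighbor, which, as you observe, cannot furnish a $\delta/4$-fraction. The paper's resolution is a different bookkeeping: for a non-exceptional $p\in\cK'_1$ and each $q\in\Gamma_2(p)\cap W_2$, the third point $r\in\spn\{p,q\}$ cannot lie in $W_3$ (otherwise $p\in W$, contradicting $\cK_1\subset\C^n\setminus W$), so $r\in\cK_3$; moreover the map $q\mapsto r$ is injective (if $w_1,w_2\in W_2$ both produced the same $r\in\cK_3$, then $\spn\{p,r\}=\spn\{w_1,w_2\}\subset W$, again forcing $p\in W$). Hence $p$ acquires at least $(\delta/2)|\cT_2|\geq(\delta/2)|\cK_2|$ \emph{distinct} neighbors in $\cK_3$, each of which spans $w_0$ with $p$ after the projection. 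In other words, the many $W_2$-neighbors that collapse onto $w_0$ get converted into an equal number of surviving $\cK_3$-neighbors. This conversion argument is the missing idea; without it the $W_j$-heavy case genuinely fails, and neither ``tightening the case split'' (the $W_j$-heavy case can be forced when $|W_j|\gg|\cK_j|$, with no $\cK_j$-neighbors at all) nor ``absorbing the offending points into the bad count'' (the set of such points need not have size $O(1)$) will repair it.

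One further small slip: your case-split threshold is $|\Gamma_{\cK_1}(p)|\geq(\delta/2)|\cK_1|$, whereas the natural threshold (and the one the paper uses) is $(\delta/2)|\cT_j|$, since the hypothesis only guarantees $|\Gamma_j(p)|\geq\delta|\cT_j|$ and $\cT_j$ may be dominated by $W_j$.
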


\begin{proof}
Consider an element $p\in \cK'_1$ that is not exceptional. Then, before the projection to $W_0$, there were $\delta |\cT_2|$ elements of $\cT_2$ that each, together with $p$, spanned a point in $\cK_3 \cup W$. I.e., $|\Gamma_2(p)|\geq \delta|\cT_2|$. 
Observe that some of the elements from $\Gamma_2(p)$ may have been projected to a multiple of $w_0$.
We wish to show that in any case there are many points in $\cK'_1\cup\cK_2\cup\cK_3 \cup \{w_0\}$ that together with $p$ span a third point in the set. We consider two cases.
\begin{enumerate}
\item  $|\Gamma_2(p)\cap W| \leq (\delta/2)|\cT_2|$: In this case 
$$|\Gamma_2(p)\cap \cK_2| =  |\Gamma_2(p)| - |\Gamma_2(p)\cap W| \geq \delta |\cT_2| - (\delta/2)|\cT_2| = (\delta/2)|\cT_2| \geq (\delta/2)|\cK_2| \;.$$

\item $|\Gamma_2(p)\cap W| > (\delta/2)|\cT_2|$: Observe that for any $q\in \Gamma_2(p)\cap W$ the space spanned by $p$ and $q$ must contain a point in $\cK_3$ as otherwise we would get that $p\in W$ as well, contradicting the assumption that $\cK_1 \subset \C^n\setminus W$. Furthermore, all the points in $\cK_3$ that are obtained in this manner must be distinct. Indeed, if $p$ spans $q\in \cK_3$ with $w_1,w_2\in W_2$ then, as $w_1$ and $w_2$ are linearly independent and so are $p$ and $q$, we get that $\spn\{p,q\}=\spn\{w_1,w_2\}$ and again it follows that $p\in W$. It therefore follows that $p$ spans a point in $W$ with at least 
$$ (\delta/2)|\cT_2| \geq  (\delta/2)|\cK_2|$$ 
elements of $\cK_3$. As any two points in $\cup_i \cK_i$ remained linearly independent after the projection of $W$ to $W_0$, it follows that $p$ spans $w_0$ with at least $(\delta/2)|\cK_2|$ elements of $\cK_3$.
\end{enumerate}
Thus, in any case $p$ has at least  $(\delta/2) \cdot |\cK_2|$ points in $\cK_2\cup\cK_3$ that together with it span a third point in $\cK'_1\cup\cK_2\cup\cK_3\cup\{w_0\}$.

A similar argument shows roughly the same result for $p\in \cK_2\cup\cK_3$, where now we also have to remember to use the Chernoff bound to claim that the fraction of neighbors it has in $\cK'_1$ is roughly the same as in $\cK_1$, namely, at least $\delta/2$ (similarly to the proof of \autoref{thm:EK-robust}).

We therefore have a set of size at most, say, $4|\cK_2|+1$ that, with the possible exception of $c+1$ points (the original $c$ points and $w_0$), satisfy the condition of \autoref{cor:EK-robust} with parameter $\delta/4$ (we lose a factor of $2$ in $\delta$ when sampling $\cK'_1$ and then another factor due to the projection to $W_0$).
\end{proof}

We continue with the proof of \autoref{thm:EK-robust-span}. \autoref{cla:ek-g-large} and \autoref{cor:EK-robust} imply that the projected set $\cK'_1\cup\cK_2\cup\cK_3\cup W_0$ is contained in a subspace $V$ of dimension at most $O_c(1/\delta^3)$. As we projected $W$ to $\spn{w_0}\subseteq W$, it follows that $\cK'_1\cup\cK_2\cup\cK_3\cup W \subset V+W$, and clearly  $\dim(V+W)= O_c(r+1/\delta^3)$.
All that is left is to extend the bound to include $\cK_1$ instead of $\cK'_1$ and this is done as in the proof of \autoref{thm:EK-robust} without losing much in the dimension of $V$ (except a possible additive term of $c$ to $\dim(V)$). We thus get that $\cK_1\cup\cK_2\cup\cK_3\cup W\subseteq V+W $. This of course implies that $\cT_1\cup\cT_2\cup\cT_3\subseteq V+W $ and  $\dim(V+W)=O_c(r+1/\delta^3)$ as claimed.

\item {\bf Case $|\cK_3| \leq |\cK_1|^{1/3}$:} \hfill \break
The proof in this case is similar to the second case in the proof of \autoref{thm:EK-robust}. 

Note that, since $\cK_1$ is so large and every $p\in \cK_2$ has at least $\delta |\cT_1|$ neighbors in $\cT_1$, we get that  $p$ also has at least $(\delta/2)\cdot |\cK_1|$ neighbors in $\cK_1$. Indeed, as before if a neighbor $q$ of $p$ is in $W_1$ then the third point spanned by $p$ and $q$ cannot be in $W_3$. Hence it must be in $\cK_3$. Again it is easy to show that all the elements in $\cK_3$  that are obtained in this way must be distinct and since the set $\cK_3$  is too small the claim follows.

We now proceed as in  the proof of \autoref{thm:EK-robust}. 
For $p\in\cK_2$ and $q\in \cK_3$ we define the two dimensional space $V(p,q)=\spn\{p,q\}$ and denote $V(p,q)_1 = V(p,q)\cap \cK_1$.

Let $\cI$ contain the $c$ exceptional points. Consider $p_1\in\cK_2$ that is not in the span of the points in $\cI$. Add $p_1$ to $\cI$ as well as any $q\in \cK_3$ so that $|V(p_1,q)_1| > ((\delta/2)^2/100) \cdot|\cK_1|$. Continue this process where at each step $i$ we pick $p_i\in \cK_2$ that is not in the linear span of the vectors in $\cI$. We continue doing so noting that at east step the number of vectors in $\cK_1$ that is covered by the neighborhoods of the points $p_i$ that we picked grows by at least $(1/2)\cdot(\delta/2) \cdot |\cK_1|$ (the argument is the same as in the second case in the proof of  \autoref{thm:EK-robust}). Hence, the process must terminate after $O(1/\delta)$ steps at which stage $\cI$ is  of size $O_c(1/\delta^3)$. As in the second case in the proof of \autoref{thm:EK-robust} we conclude that $\cI$ spans all points in $\cK_2$. 

We continue as in the proof of \autoref{thm:EK-robust}. 
We find a maximal subset of $\cK_3$ whose neighborhood inside $\cT_1$ are disjoint. As each neighborhood has size at least $ \delta \cdot|\cT_1|$ it follows there the size of the subset is at most $O(1/\delta)$. We add those $O(1/\delta)$ points to $\cI$ and let $V=\spn\{\cI\}$. Clearly $\dim(V) = O_c(1/\delta^3)$. As in the  proof of \autoref{thm:EK-robust} we have that $\cT_2\subset V+W$.

\begin{claim}
$\cup_i \cT_i \subset V+W$.
\end{claim}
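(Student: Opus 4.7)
The plan is to cascade the inclusion $\cT_2 \subseteq V+W$ (already established) first to $\cK_3$, then to $\cK_1$, using the two "seeds" inside $\cI$: the chosen points of $\cK_2$ (which already forced $\cT_2 \subseteq V+W$) and the maximal $\cT_1$-neighborhood-disjoint subset of $\cK_3$. Note that $W_1, W_3 \subseteq W \subseteq V+W$ is automatic, so it suffices to prove $\cK_3 \subseteq V+W$ and $\cK_1 \subseteq V+W$.

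First I would establish a small "base" of $\cK_1$ inside $V+W$. For any $p' \in \cI \cap \cK_3$ and any $q \in \Gamma_1(p')$, I claim $q \in V+W$. This is immediate if $q \in W_1$. Otherwise $q \in \cK_1$, and by the definition of $\Gamma_1(p')$ the span $\spn\{q,p'\}$ contains some $r \in \cT_2 \subseteq V+W$. Writing $r = \alpha q + \beta p'$, pairwise linear independence forces $\alpha,\beta \neq 0$, and since $p' \in \cI \subseteq V$ we may solve for $q$ to obtain $q \in V+W$.

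Next I would use the maximality property to extend this to all of $\cK_3$. Given $p \in \cK_3 \setminus \cI$, maximality of the disjoint-neighborhood family that was added to $\cI$ means there must exist some $p' \in \cI \cap \cK_3$ with $\Gamma_1(p) \cap \Gamma_1(p') \neq \emptyset$ (otherwise $p$ itself could be appended, contradicting maximality). Pick $q$ in this intersection; by the previous paragraph $q \in V+W$. Since $q \in \Gamma_1(p)$, the span $\spn\{p,q\}$ contains a point $r \in \cT_2 \subseteq V+W$, and exactly the same inversion argument gives $p \in V+W$. Together with $W_3 \subseteq W$, this yields $\cT_3 \subseteq V+W$.

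Finally I would handle $\cK_1$. For any non-exceptional $p \in \cK_1$, the relaxed $\delta$-EK property supplies at least one $q \in \cT_2$ for which $\spn\{p,q\}$ meets a point $r \in \cT_3$; both $q$ and $r$ now lie in $V+W$, and writing $r = \alpha p + \beta q$ with $\alpha \neq 0$ (pairwise independence) gives $p \in V+W$. The at most $c$ exceptional points were thrown into $\cI$ at the very start of the construction, so they lie in $V$. Hence $\cK_1 \subseteq V+W$, and $\cup_i \cT_i \subseteq V+W$ as required.

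The only genuine subtlety — and the reason the proof of the $W = 0$ version of the claim in \autoref{thm:EK-robust} does not transcribe verbatim — is that here a "third point" on a spanned line may lie in $W_k$ rather than $\cK_k$. This is harmless precisely because $W \subseteq V+W$: every time the argument needs to conclude that some spanned point sits in $V+W$, it does, regardless of whether it comes from the $\cK$-part or the $W$-part of the configuration. Pairwise linear independence, assumed globally in the theorem, supplies the nonzero coefficients needed to invert each spanning relation.
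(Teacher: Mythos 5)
Your proposal is correct and follows essentially the same cascade as the paper: first, neighbors in $\cT_1$ of the chosen points in $\cI\cap\cK_3$ land in $V+W$ (using $p'\in V$, $\cT_2\subset V+W$, and pairwise independence); then maximality of the disjoint-neighborhood family propagates this to all of $\cK_3$; finally $\cK_1$ follows because every non-exceptional point has a $\cT_2$-neighbor and a spanned $\cT_3$-point, both already in $V+W$. The only difference is cosmetic — you spell out the last step for $\cK_1$ and the disposal of the $c$ exceptional points more explicitly than the paper, which simply asserts the final implication.
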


\begin{proof}
As before, note that if $p\in \cT_1$ is in the neighborhood of some $p'\in \cI\cap \cK_3$ then $p\in V+W$. Indeed, the subspace spanned by $p'$ and $p$ intersects $\cT_2$.  I.e. there is $q\in \cT_2$ that is equal to $\alpha p + \beta p'$, where from pairwise independence it follows that  $\alpha\neq0$ and $\beta\neq 0$. As $p'\in V$ and $\cT_2\subset V+W$ it also holds that $p\in V+W$.

We now have that the neighborhood of every $p\in \cK_3\setminus \cI$ intersects the neighborhood of some $p'\in\cI\cap \cK_3$. Thus, there is some point $q\in \cT_1$ that is in $V+W$ (by the argument above, as it is a neighbor of $p'$) and is also a neighbor of $p$. It follows that also $p\in V+W$ as the subspace spanned by $q$ and $p$ contains some point in $\cT_2$ and since $\cT_2\subset V+W$ we get that $p$ is in $V+W$ as well. Hence all the points in $\cK_3$ are in $V+W$. As $W_2\cup W_3\cup\cK_2\cup\cK_3 \subset V+W$ it follows that also $\cK_1\subset V+W$. The claim about the dimension of $V+W$ is clear.
\end{proof}
This concludes the proof of the second case and with it the proof of \autoref{thm:EK-robust-span}.
\end{enumerate}
\end{proof}

\section{Structure theorem for quadratics  satisfying $Q\in\sqrt{(Q_1,Q_2)}$}\label{sec:structure}


An important tool in the proofs of our main results is the following theorem that classifies all the possible cases in which a quadratic $Q$ is in the radical of two other quadratics, where all quadratics are irreducible.

Before stating the theorem we explain the intuition behind the different cases. We would like to understand when does a quadratic polynomial $Q$ can belong to the radical of two other quadratics. Clearly, if $Q$ is a linear combination of $Q_1,Q_2$ then it is in their radical (and in fact, in their linear span). Another option is that $Q_2 = \alpha Q_1 + b^2$ and then $Q$ can be of the form $\beta Q_1 + b\cdot a$. This case is clearly different than the linear span case. Finally, another option is the following situation: $Q'_1 = xy$, $Q'_2 = z(x+z)$ and $Q'=yz$. It is not hard to verify that in this case too, $Q'\in \sqrt{(Q'_1,Q'_2)}$. All these polynomials are reducible of course, but by defining, e.g., $Q_1 = Q'_1 + Q'_2$, $Q_2 = Q'_1-Q'_2$ and $Q= Q' + Q'_1 + Q'_2$ we get three irreducible polynomials that do not fall into any of the previous two cases.
Thus, all the three cases are distinct and can happen. What \autoref{thm:structure-intro} shows is that, essentially, these are the only possible cases. To ease the reading we repeat the theorem here with slightly different notation.

\begin{theorem}\label{thm:structure}
Let $Q,Q_1,Q_2$ be such that $Q \in \sqrt{Q_1,Q_2}$. Then one of the following cases hold:
\begin{enumerate}
\item $Q$ is in the linear span of $Q_1,Q_2$  \label{case:span}
\item \label{case:rk1}
There exists a non trivial linear combination of the form $\alpha Q_1+\beta Q_2 = b^2$ where  $b$ is a linear function 
\item There exist two linear functions $b_1$ and $b_2$ such that when setting $b_1=b_2=0$ we get that $Q,Q_1$ and $Q_2$ vanish. In other words, $Q,Q_1,Q_2\in \sqrt{(b_1,b_2)}$. \label{case:2}
\end{enumerate}
\end{theorem}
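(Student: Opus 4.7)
The plan is to compute the resultant of $Q_1$ and $Q_2$ with respect to a single variable, translate the hypothesis $Q \in \sqrt{(Q_1, Q_2)}$ into a divisibility between this resultant and an analogous quantity built from $Q$, and then read off the three cases from the factorization of the resultant. First I would dispose of the case that $Q_1$ and $Q_2$ share a common polynomial factor. If $Q_1$ and $Q_2$ share a common linear factor $b$, then $V(b) \subseteq V(Q_1) \cap V(Q_2) \subseteq V(Q)$ forces $b \mid Q$, and case~\ref{case:2} follows using any $b_2$ linearly independent from $b$. If $Q_1, Q_2$ are proportional, analogous reasoning gives case~\ref{case:span} (when $Q_1$ is irreducible) or case~\ref{case:2} (when $Q_1 = b^2$).

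Henceforth I assume $Q_1$ and $Q_2$ are coprime. After a generic linear change of coordinates in $x_2, \ldots, x_n$ and replacing $Q_2$ by $Q_2 - \lambda Q_1$ for a suitable $\lambda \in \C$, I arrange
\begin{equation*}
Q_1 = \alpha x_1^2 + a x_1 + F, \qquad Q_2 = b x_1 + G, \qquad Q = \gamma x_1^2 + c x_1 + H,
\end{equation*}
with $\alpha \neq 0$, $a, b, c \in \C[x_2, \ldots, x_n]$ linear and $F, G, H$ quadratic. On the locus $\{b \neq 0\}$ the hypersurface $V(Q_2)$ is parametrised by $x_1 = -G/b$, and substituting into $Q_1$ and $Q$ and clearing denominators yields
\begin{equation*}
R_1 := \alpha G^2 - a G b + F b^2 = \res_{x_1}(Q_1, Q_2), \qquad R_2 := \gamma G^2 - c G b + H b^2,
\end{equation*}
two quartic forms in $x_2, \ldots, x_n$. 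By \autoref{thm:res}, $R_1 \not\equiv 0$, and the hypothesis $V(Q_1, Q_2) \subseteq V(Q)$ translates on $\{b \neq 0\}$ to $V(R_1) \subseteq V(R_2)$, so $R_2 \in \sqrt{(R_1)}$ in $\C[x_2, \ldots, x_n]$.

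The endgame is a case analysis on the factorization of $R_1$, viewed as a quadratic form in $(G, b)$ whose discriminant $a^2 - 4 \alpha F$ is precisely the discriminant of $Q_1$ as a polynomial in $x_1$. When this discriminant is a nonzero perfect square $d^2$ in $\C[x_2, \ldots, x_n]$, the factorization $R_1 = \alpha(G - \mu_+ b)(G - \mu_- b)$ with $\mu_\pm = (a \pm d)/(2\alpha)$, combined with $R_2 \in \sqrt{(R_1)}$, produces a nontrivial $\C$-linear combination of $Q_1$ and $Q_2$ equal to the square of a linear form, yielding case~\ref{case:rk1}. When the discriminant vanishes or $R_1$ splits in $\C[x_2, \ldots, x_n]$ via a common linear factor of $a, b, F, G$, each such factor produces a pair of linear forms $b_1, b_2$ with $V(b_1, b_2) \subseteq V(Q_1) \cap V(Q_2) \cap V(Q)$, giving case~\ref{case:2}. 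Otherwise $R_1$ is an irreducible quartic; then $R_2 \in \sqrt{(R_1)}$ together with $\deg R_2 \leq 4$ forces $R_2 = \mu R_1$ for a scalar $\mu$, which lifts back to $Q \in \spn(Q_1, Q_2)$, giving case~\ref{case:span}.

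I expect the lifting step to be the main obstacle: converting divisibilities between the quartics $R_1, R_2$ in $x_2, \ldots, x_n$ into precise linear or algebraic relations among the quadratics $Q, Q_1, Q_2$ in all $n$ variables. Because the substitution $x_1 = -G/b$ is singular along $\{b = 0\}$, one must verify that the relations derived on $\{b \neq 0\}$ extend to all of $\C^n$; and the reconstruction of the linear form $\ell$ in case~\ref{case:rk1} or the pair $(b_1, b_2)$ in case~\ref{case:2} from a factor of $R_1$ requires carefully tracing through the geometry of $V(Q_1) \cap V(Q_2)$.
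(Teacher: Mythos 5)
Your setup is the same as the paper's: normalize $Q_2$ to be linear in $x_1$, form $R_1 = \res_{x_1}(Q_1,Q_2)$, substitute $x_1 = -G/b$ into $Q$ to get $R_2$, and deduce $R_2 \in \sqrt{(R_1)}$. However, the paper makes one additional normalization that turns out to be crucial and that you omit: it also replaces $Q$ by $Q - (\gamma/\alpha)Q_1$ so that $Q$ has no $x_1^2$ term. With $\gamma = 0$ the residue $R_2$ becomes $b\,(Hb - cG)$, i.e.\ $b$ times a \emph{degree-3} polynomial, and what one feeds into the radical-of-resultant argument is a degree-3 object. Since $R_1$ has degree $4$, forcing all its irreducible factors to divide something of degree $3$ is a much tighter constraint than $R_2 \in \sqrt{(R_1)}$ with $\deg R_2 = 4$, and this is what makes the paper's case analysis clean and short (the resultant must be $C^2$ or $a^2 C$, and the residue must be $aC$ or $\lambda a C$). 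Working with degree-4 $R_2$, as you do, leaves a far wider zoo of factorization patterns to handle, and you do not enumerate them.

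More importantly, the endgame as you've written it does not map to the three conclusions correctly. The discriminant $a^2 - 4\alpha F$ being a nonzero perfect square of a linear form is exactly the condition that $Q_1$ factors into two \emph{distinct} linear forms; it does not produce a linear combination $\alpha Q_1 + \beta Q_2 = \ell^2$, and in general none exists (e.g.\ $Q_1 = xy$, $Q_2 = zw$: every nonzero combination has rank $\geq 2$, so Case~\ref{case:rk1} fails — one lands in Case~\ref{case:span} or Case~\ref{case:2} instead). Conversely, the discriminant \emph{vanishing} means $Q_1$ is itself a square, which is Case~\ref{case:rk1} (take $\alpha = 1, \beta = 0$), not Case~\ref{case:2} as you assert. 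You have the two non-generic branches swapped, and neither assignment is justified by the factorization of $R_1$ alone. You also do not treat the case where $Q_2 = bx_1 + G$ is itself reducible (equivalently $b \mid G$, or $b = 0$), which is disjoint from the ``$Q_1,Q_2$ share a factor'' case you dispose of at the start; the paper needs a separate claim (its Claim 4.4) for exactly this. Finally, the lifting step when $R_1$ is irreducible (getting $Q = \mu Q_1 + \nu Q_2$ from $R_2 = \mu R_1$) does work when $b$ and $G$ are coprime — collect terms, use $b^2 \mid (\gamma - \mu\alpha)G^2$ and $\gcd(b,G)=1$ — but coprimality of $b,G$ is the reducibility issue for $Q_2$ that you've postponed, so the argument is circular as stated. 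I'd recommend adopting the paper's extra normalization of $Q$, which reduces the whole endgame to a two-case analysis on how a quartic can have its radical divide a cubic.
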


\begin{proof}
By applying a suitable linear transformation we can assume that for some $r\geq 1$  $$Q_1 = \sum_{i=1}^{r} x_i^2.$$

%

We can also assume wlog that $x_1^2$ appears only in $Q_1$ as we can replace $Q_2$ with any polynomial of the form $Q_2'=Q_2 - \alpha Q_1$ without affecting the result. Indeed, $Q\in \sqrt{(Q_1,Q_2)}$ if and only if $Q\in \sqrt{(Q_1,Q'_2)}$. Furthermore, all cases in the theorem remain the same if we replace $Q_2$ with $Q'_2$ and vice versa.

In a similar fashion we can replace $Q$ with $Q'=Q-\beta Q_1$ to get rid of the term $x_1^2$ in $Q$. Thus, wlog, the situation is 
\begin{eqnarray}
Q_1 &=& x_1^2 + Q_1' \nonumber \\
Q_2 &=& x_1\cdot b_2 -  A \label{eq:Q2}\\
Q  &=& x_1 \cdot b + B  \nonumber 
\end{eqnarray}
where $Q'_1,A,B,b_2$ and $b$ do not depend on $x_1$.

The first case we handle is when the ``new'' $Q_2$ does not depend on $x_1$.

\begin{claim}\label{cla:L2=0}
If $b_2=0$ then Case~\ref{case:rk1} of the theorem holds.
\end{claim}

\begin{proof}

For any assignment satisfying $A=0$ there are two solutions to $Q_1=0$, unless $Q'_1=0$, whereas $Q$ vanishes for only one value of $x_1$. Thus, we must have $Q'_1=0$ modulo $A$, which means that either $A$ is a square of a linear function and so $Q_1$ and $Q_2$ satisfy Case~\ref{case:rk1} of the theorem (as we assume $b_2=0$), or $Q_1 = \alpha \cdot A$ for some nonzero constant $\alpha$ and then $x_1^2$ is in the span of $Q_1$ and $Q_2$, and  again Case~\ref{case:rk1} of the theorem holds.
\end{proof}

%

We next handle the case where the ``new'' $Q_2$ is reducible.

\begin{claim}\label{cla:L-divides}
If $b_2$ divides $A$ then the conclusion of the theorem holds.
\end{claim}

\begin{proof}
If $b_2$ divides $A$ then $Q_2 = b_2 \cdot b'_2$. Assume that $b'_2$ is not a constant multiple of $b_2$ (as otherwise Case~\ref{case:rk1} of the theorem holds). Then, after a suitable invertible linear transformation we have $Q_2 = y\cdot z$.
Denote $$Q_1 = \alpha y^2 + \beta z^2 + y \cdot \ell_1 + z\cdot \ell_2 + Q''_1$$ and
$$Q = \alpha' y^2 + \beta' z^2 + \gamma' yz + y \cdot k_1 + z\cdot k_2 + Q'',$$
where $\ell_1,\ell_2,k_1,k_2,Q''_1,Q''$ do not involve $y$ nor $z$.
Observe that since we can subtract a multiple of $Q_2$ from $Q_1$ we can assume that the term $yz$ does not appear in $Q_1$.
Consider the assignment $y=0$. This simplifies $Q_1$ and $Q$ to:
$$Q_1|_{y=0} =  \beta z^2 + z\cdot \ell_2 + Q''_1$$ and
$$Q|_{y=0} = \beta' z^2 +z\cdot k_2 + Q'',$$
which are two polynomials not depending on $y$. We now have that any assignment that makes $Q_1|_{y=0}$ vanish, also makes  $Q|_{y=0}$ vanish. In other words $Q|_{y=0}\in \sqrt{(Q_1|_{y=0})}$. This means that all irreducible factors of $Q_1|_{y=0}$ divide $Q|_{y=0}$. Thus, either $Q|_{y=0} = \delta \cdot Q_1|_{y=0}$ for some constant $\delta$, or $Q_1|_{y=0}=b_3^2$ and $Q|_{y=0}=b_3\cdot b'_3$ for some linear functions $b_3,b'_3$. 

Notice that in the second case, if we set $y=b_3=0$ then $Q_1$ and $Q_2$ vanish and hence $Q$ also vanishes and Case~\ref{case:2} of the theorem holds.

So let us assume that $Q_1|_{y=0}$ divides $Q|_{y=0}$. We repeat the same reasoning when setting $z=0$ and again assume that $Q_1|_{z=0}$ divides $Q|_{z=0}$. By comparing coefficients we get that there are constants $\delta,\delta'$ such that $\beta'=\delta\beta, k_2 = \delta\ell_2,Q''=\delta Q''_1$ and $\alpha'=\delta'\alpha, k_1 = \delta'\ell_1,Q''=\delta' Q''_1$. It follows that either $\delta=\delta'$ and we obtain that $Q = \delta Q_1 + \gamma' Q_2$, which satisfies Case~\ref{case:span} of the theorem or that $Q''_1=Q_1=0$ in which case $Q_1,Q_2,Q$ all vanish when setting $y=z=0$ as in Case~\ref{case:2} of the theorem. 
\end{proof}

Hence, from now on we assume that $b_2$ is non-zero and does not divide $A$.
Consider the resultant of $Q_1,Q_2$ (as given in \autoref{eq:Q2}) with respect to $x_1$. It is equal to 
\begin{equation}\label{res}
\res_{x_1}(Q_1,Q_2) = A^2 + b_2^2 \cdot Q'_1.
\end{equation}
We next study what happens when the resultant vanishes. I.e. when
\begin{equation}\label{eq:res}
\res_{x_1}(Q_1,Q_2)=A^2 +  b_2^2 \cdot Q'_1=0 \;.
\end{equation}

\begin{claim}\label{cla:res-vanish}
Whenever $\res_{x_1}(Q_1,Q_2) =0$ it holds that $A\cdot b + b_2 \cdot B=0$. 
\end{claim}

\begin{proof}
If $\res_{x_1}(Q_1,Q_2)=0$ then either $b_2=0$, which also implies $A=0$ and in this case the claim clearly holds,  or  $b_2\neq 0$. Consider the case $b_2\neq 0$ and set $x_1=A/b_2$ (we are free to select a value for $x_1$ as $\res_{x_1}(Q_1,Q_2)$ does not involve $x_1$). Notice that for this substitution we have that $Q_2=0$ and that 
$$Q_1|_{x_1=A/b_2} =  (A/b_2)^2 +  Q'_1 = \res_{x_1}(Q_1,Q_2)/b_2^2=0.$$
Hence, we also have $Q|_{x_1=A/b_2} = 0$. In other words that 
$$A\cdot b + b_2 \cdot  B=0.$$
\end{proof}

In other words, \autoref{cla:res-vanish} implies  that 
$$A\cdot b + b_2 \cdot B \in \sqrt{(\res_{x_1}(Q_1,Q_2))}.$$
Thus, there exists an integer $k$ and a polynomial $\psi$ so that 
$$(A\cdot b + b_2 \cdot B)^k = \psi \cdot \res_{x_1}(Q_1,Q_2) = \psi \cdot (A^2  + b_2^2 \cdot Q'_1).$$
This means that all irreducible factors of $A^2 + b_2^2 \cdot Q'_1$ divide $A\cdot b + b_2 \cdot B$. As $\deg(A^2 +  b_2^2 \cdot Q'_1)=4$ and $\deg(A\cdot b + b_2 \cdot B)=3$ it follows, by examining the possible ways that a degree $4$ polynomial can factor, that  one of the following cases must hold: 
\begin{enumerate}
\item There is a quadratic polynomial $C$ and a linear function $a$ such that 
\begin{eqnarray*}
A^2 +   b_2^2 \cdot Q'_1=  C^2 \label{eq:deg-4-1}\\
b\cdot A +  b_2 \cdot B =   a \cdot C \label{eq:deg-3-1}
\end{eqnarray*} \label{case:quadratic-res}
\item For some scalar $\lambda$, a linear function $a$ and a quadratic $C$ \label{case:lin-res}
\begin{eqnarray}
A^2 +   b_2^2 \cdot Q'_1= a^2\cdot C \label{eq:deg-4}\\
b\cdot A +  b_2 \cdot B = \lambda \cdot a \cdot C \nonumber \label{eq:deg-3}
\end{eqnarray}
\end{enumerate}
We next handle each of these cases.
\begin{description}
\item[{Case~\ref{case:quadratic-res}:}] 
we have that 
$$b_2^2\cdot Q'_1 = C^2 - A^2 = (C+A)(C-A).$$
If $Q'_1$ is irreducible then $ \alpha  b_2^2 = (C+A)$ and $Q'_1 = \alpha (C-A)$, or $\alpha b_2^2 = (C-A)$ and $Q'_1 =  \alpha (C+A)$   for some $\alpha\neq 0$. In the first case we get that $Q'_1 = -2\alpha A + \alpha^2b_2^2$ and hence $Q_1  + 2\alpha Q_2 = (x+\alpha b_2)^2$. Similarly, in the second case we get $Q'_1 = 2\alpha A + \alpha^2b_2^2$ and thus $Q_1  - 2\alpha Q_2 = (x-\alpha b_2)^2$. In either cases, Case~\ref{case:rk1} of the theorem holds.

If $Q'_1$ is reducible, i.e. $Q'_1 = e\cdot f$, then either the analysis above continues to hold or it must be the case that (w.l.o.g.) $C+A = b_2 \cdot e$ and $C-A = b_2 \cdot f$. It follows that in this case $b_2$ divides $A$ and we are done by \autoref{cla:L-divides}.

\item[{Case~\ref{case:lin-res}:}]
From \autoref{eq:deg-4} we learn that $a^2 | \res_{x_1}(Q_1,Q_2)$ so in particular, when setting $a=0$ we get that the resultant is zero.  \autoref{thm:res} implies that, modulo $a$, either one of $Q_1,Q_2$ vanishes, or that   $Q_1$ and $Q_2$ share a linear factor. 

As $a$ does not involve $x_1$, clearly $Q_1|_{a=0}\neq 0$. Further, for $Q_2$ to vanish modulo $a$ we need that $b_2$ is a multiple of $a$, and vice versa. This implies that $b_2$ divides $A$ and we are done by \autoref{cla:L-divides}. 

We thus have to deal with the case that, modulo $a$,  $Q_1$ and $Q_2$ share a linear factor.
Let $a'$ be that common linear factor. We get that by setting $a=a'=0$ both $Q_1$ and $Q_2$ vanish and hence also $Q$ vanishes  and Case~\ref{case:2} of the theorem holds. 
\end{description}
This concludes the proof of \autoref{thm:structure}.
\end{proof}

\section{Sylvester-Gallai theorem for quadratic polynomials}\label{sec:quad-SG}

In this section we prove \autoref{thm:main-sg-intro}. For convenience we repeat the statement of the theorem.

\begin{theorem*}[\autoref{thm:main-sg-intro}]
Let $\{Q_i\}_{i\in [m]}$ be homogeneous quadratic polynomials such that each $Q_i$ is either irreducible or a  square of a linear function. Assume further that for every $i\neq j$ there exists $k\not\in \{i,j\}$ such that $Q_k\in\sqrt{(Q_i,Q_j)}$. Then the linear span of the $Q_i$'s has dimension $O(1)$.
\end{theorem*}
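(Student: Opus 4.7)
The plan is to combine the structure theorem (Theorem~\ref{thm:structure}) with the robust Sylvester--Gallai theorem (Theorem~\ref{thm:bdwy}) and its Corollary~\ref{cor:bdwy}, viewing the quadratics $Q_i$ as vectors in the finite-dimensional space of homogeneous degree-$2$ polynomials (using Remark~\ref{rem:span-pass} to pass between ``spanning'' and ``collinearity''). I will say that $Q \in \cT \eqdef \{Q_i\}$ is \emph{good} if for at least $|\cT|/100$ indices $j$ the witness $Q_k \in \sqrt{(Q,Q_j)}$ guaranteed by the hypothesis actually lies in $\spn(Q,Q_j)$, i.e., falls into Case~\ref{case:span} of Theorem~\ref{thm:structure}. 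If every $Q$ is good then $\cT$ forms a $(1/100)$-SG configuration and Theorem~\ref{thm:bdwy} yields $\dim \spn \cT = O(1)$; if at most one $Q$ is bad, Corollary~\ref{cor:bdwy} gives the same conclusion. Hence I may fix two bad polynomials $Q_1,Q_2 \in \cT$, and by a union bound at least $0.98 \cdot |\cT|$ of the remaining $Q_i$'s satisfy that both $(Q_1,Q_i)$ and $(Q_2,Q_i)$ land in Case~\ref{case:rk1} or Case~\ref{case:2} of Theorem~\ref{thm:structure}.

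I then partition this large subset into three (overlapping) classes and bound the span of each separately: $\cA_{22}$, those $Q_i$ for which Case~\ref{case:rk1} holds against both $Q_1$ and $Q_2$; $\cA_1$, those for which Case~\ref{case:2} holds against $Q_1$; and $\cA_2$, the analogous class against $Q_2$. A short case check shows that every non-exceptional $Q_i$ lies in $\cA_{22} \cup \cA_1 \cup \cA_2$. For $\cA_{22}$, each such $Q_i$ admits two representations $Q_i = \alpha_i Q_1 + b_i^2 = \beta_i Q_2 + c_i^2$, and subtracting them gives $\alpha_i Q_1 - \beta_i Q_2 = c_i^2 - b_i^2 = (c_i-b_i)(c_i+b_i)$, so the linear forms $c_i \pm b_i$ are constrained to the (bounded-dimensional) variety of rank-$\leq 2$ factorizations of quadratics in $\spn(Q_1,Q_2)$; propagating this through the defining relation for $Q_i$ pins $\spn(\cA_{22})$ into a constant-dimensional space. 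This is the content of the first lemma, paralleling Claim~\ref{cla:2-2}.

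The harder part, and the main obstacle I anticipate, is bounding $\dim \spn(\cA_1)$ (and symmetrically $\cA_2$); this is the second lemma, Claim~\ref{cla:3}. The plan is to fix a minimal representation $Q_1 = \sum_{\ell=1}^{r} a_{2\ell-1} a_{2\ell}$ (Definition~\ref{def:rank-s}), whose span $U = \spn\{a_\ell\}$ is canonically defined by Claim~\ref{cla:irr-quad-span} and has $\dim U \leq 2r$. For each $Q_i \in \cA_1$, the codimension-$2$ zero set $\{b_1^{(i)} = b_2^{(i)} = 0\}$ shared with $Q_1$ forces $b_1^{(i)},b_2^{(i)} \in U$, and one extracts from this that $Q_i$ reduces, after quotienting by $U$ and distinguishing a variable $z \in U$, to the form $z \cdot \ell_i$ for a linear form $\ell_i$. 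I then need to show that the hypothesis of the theorem transfers to a Sylvester--Gallai condition on the $\ell_i$'s in the quotient space, so that Theorem~\ref{thm:bdwy} bounds $\dim\spn\{\ell_i\}$ and hence $\dim\spn(\cA_1) \leq O(\dim U) + O(1)$. The delicate step will be verifying that for a pair $(Q_i,Q_j) \subset \cA_1$ the witness $Q_k$ from the hypothesis genuinely descends, modulo $U$, to a third point on the line through $\ell_i$ and $\ell_j$; this demands a further case analysis on which case of Theorem~\ref{thm:structure} governs the triple $(Q_i, Q_j, Q_k)$, and the witnesses falling outside $\cA_1$ will be absorbed as bounded exceptions via Corollary~\ref{cor:bdwy}. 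Combining the three bounds on $\cA_{22}, \cA_1, \cA_2$ with the $0.02 \cdot |\cT|$ exceptional polynomials and the two distinguished $Q_1,Q_2$ then concludes the proof.
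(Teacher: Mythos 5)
Your outline recovers the skeleton of the paper's argument---dichotomize on whether the set is already a robust SG configuration, fix two bad polynomials $Q_1,Q_2$ otherwise, apply the structure theorem to the pairs $(Q_1,Q_i),(Q_2,Q_i)$, and partition the bulk of the set according to which case of \autoref{thm:structure} fires---but it omits two steps that the paper treats carefully and that are genuinely necessary for the argument to close.

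First, you run the good/bad dichotomy over the full set $\cT=\{Q_i\}$, so the two bad polynomials $Q_1,Q_2$ you extract may themselves be \emph{squares} of linear forms. This breaks both workhorse lemmas. If $Q_1=\ell_1^2$, then Case~\ref{case:rk1} of \autoref{thm:structure} holds trivially against \emph{every} partner $Q_j$ (take $\alpha=1$, $\beta=0$, $b=\ell_1$), so the two-representation argument you invoke for $\cA_{22}$ extracts no information. Likewise, in your $\cA_1$ argument the minimal-representation span $U$ has $\dim U=1$, and the claimed implication ``$Q_1\in\sqrt{(b_1^{(i)},b_2^{(i)})}$ forces $b_1^{(i)},b_2^{(i)}\in U$'' is simply false---you only learn $\ell_1\in\spn\{b_1^{(i)},b_2^{(i)}\}$, which does not pin down $b_1^{(i)},b_2^{(i)}$. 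The step that does pin them down uses $\rank_s(Q_1)=2$ together with \autoref{cla:irr-quad-span}, and this forces $Q_1$ to be irreducible. The paper avoids the trap by first splitting $\cT$ into the irreducible quadratics $\cQ$ and the squares $\cL$, running the entire bad/good dichotomy and the robust argument inside $\cQ$ alone (noting en route that if two irreducibles span a square, that square witness itself certifies Case~\ref{case:rk1} nontrivially), and only at the end controlling $\cL$ by a separate lemma (\autoref{cla:linear-remainder}). You need that split; without it the two-bad-polynomials branch does not go through.

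Second, the roughly $0.02|\cT|$ polynomials that fall into Case~\ref{case:span} against $Q_1$ or $Q_2$ cannot be ``combined in at the end'': each such $Q_i$ is a linear combination of $Q_1$ (say) and some \emph{other} polynomial of the set, and that other polynomial may itself lie in the exceptional bucket, so nothing you have established bounds their span. The paper devotes \autoref{cla:Fc} and the paragraph following it precisely to this residual set $\cF^c$: it first shows each residual polynomial satisfies Case~\ref{case:rk1} or Case~\ref{case:2} against almost all of the already-bounded set $\cF$, then rules out Case~\ref{case:rk1} (using the $V'$ cleanup step on $Q_1$), and finally re-runs the projection argument on $\cF^c$. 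Your closing sentence skips this entirely. A smaller point in the same vein: the projection-to-$z$ step must be run \emph{twice} with independent random projections of $U$, since a single projection only recovers each $Q_i$ up to a quadratic over $U$; the paper explicitly takes two independent maps and intersects, and the bound $\dim\spn(\cA_1)\leq O(\dim U)+O(1)$ you state does not follow from a single projection.
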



\begin{remark}
The requirement that the polynomials are homogeneous is not essential as homogenization does not affect the property $Q_k\in\sqrt{(Q_i,Q_j)}$.
\end{remark}

\subsection{Some useful claims}\label{sec:claims-sg}

In this section we look at some implications of \autoref{thm:structure}. We do so by considering two irreducible polynomials $Q_1$ and $Q_2$ and consider sets of polynomials that satisfy Case~\ref{case:rk1} or Case~\ref{case:2} of \autoref{thm:structure} with  $Q_1$ and $Q_2$.

\begin{claim}\label{cla:2-2}
Let $Q_1,Q_2$ be two linearly independent quadratic polynomials and let  $F_1,\ldots,F_m$ be quadratic polynomials such that for every $i$ there exist linear functions $\ell_i,b_i$ and a scalar $\beta_i$ so that 
\begin{equation}\label{eq:F-two-r}
F_i = Q_1 + \ell_i^2 = \beta_i\cdot Q_2 + b_i^2.
\end{equation}
Then, there exists a $4$-dimensional space $V$ such that for every $i$, $\{\ell_i,b_i\}\subseteq  V$.
\end{claim}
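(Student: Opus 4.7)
The plan is to rewrite the defining equation in the form
\[ Q_1 - \beta_i Q_2 \;=\; b_i^2 - \ell_i^2 \;=\; (b_i - \ell_i)(b_i + \ell_i), \]
so that the pair $(b_i,\ell_i)$ controls a factorization of the pencil element $Q_1-\beta_i Q_2$. I will then split on whether the scalars $\beta_i$ are all equal or not, and in each case find an explicit $4$-dimensional (in fact at most $4$-dimensional) space $V$ containing every $b_i,\ell_i$.

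In the first case, assume $\beta_i = \beta$ for all $i$. Set $P := Q_1-\beta Q_2$, which is nonzero because $Q_1,Q_2$ are linearly independent. Then $P = (b_i-\ell_i)(b_i+\ell_i)$ for every $i$, so $P$ factors as a product of two linear forms, and this factorization is essentially unique. If $P = u\cdot v$ with $u,v$ not proportional, then for each $i$ we must have $\{b_i-\ell_i, b_i+\ell_i\} \subseteq \spn\{u,v\}$, whence $b_i,\ell_i \in \spn\{u,v\}$, a $2$-dimensional space. If instead $P = c\cdot u^2$, then both factors $b_i\pm\ell_i$ are scalar multiples of $u$, so $b_i,\ell_i\in\spn\{u\}$.

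In the second case, pick indices $i,j$ with $\beta_i \neq \beta_j$. Subtracting the two identities gives
\[ (\beta_j - \beta_i)\,Q_2 \;=\; (b_i^2 - \ell_i^2)\,-\,(b_j^2 - \ell_j^2), \]
so $Q_2$, and hence also $Q_1 = \beta_i Q_2 + (b_i^2 - \ell_i^2)$, lies in $\spn\{b_i^2,\ell_i^2,b_j^2,\ell_j^2\}$. Define $V := \spn\{b_i,\ell_i,b_j,\ell_j\}$ (dimension $\leq 4$). After a linear change of coordinates we may assume $V\subseteq \spn\{x_1,\dots,x_4\}$, and then $Q_1,Q_2$ (and so every $Q_1 - \beta_k Q_2$) is a polynomial in $x_1,\dots,x_4$ alone.

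For an arbitrary $k$, decompose $b_k = b_k' + b_k''$ and $\ell_k = \ell_k' + \ell_k''$, where $b_k',\ell_k'\in\spn\{x_1,\dots,x_4\}$ and $b_k'',\ell_k''\in\spn\{x_5,\dots,x_n\}$. Expanding
\[ b_k^2 - \ell_k^2 \;=\; \bigl((b_k')^2-(\ell_k')^2\bigr) \,+\, 2\bigl(b_k'b_k''-\ell_k'\ell_k''\bigr) \,+\, \bigl((b_k'')^2-(\ell_k'')^2\bigr) \]
and matching the parts that involve $x_5,\dots,x_n$ forces $(b_k'')^2 = (\ell_k'')^2$, hence $b_k'' = \epsilon\,\ell_k''$ for some $\epsilon\in\{+1,-1\}$. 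The vanishing of the mixed term then gives $b_k''(b_k' - \epsilon \ell_k')=0$, so either $b_k''=0$ (whence $\ell_k''=0$ and we are done, as $b_k,\ell_k \in V$) or $b_k = \epsilon \ell_k$. The latter forces $b_k^2-\ell_k^2 = 0$, i.e.\ $Q_1 = \beta_k Q_2$, contradicting the linear independence of $Q_1,Q_2$. This completes the argument.

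The main obstacle is this last dichotomy: showing that the ``twisted'' alternative $b_k = \pm\ell_k$ cannot silently occur. It is precisely the linear independence of $Q_1,Q_2$ that blocks it, and this is where the hypothesis is used in an essential way.
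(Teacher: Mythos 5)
Your proof is correct, and Case 1 matches the paper's exactly. Case 2 reaches the same conclusion by a somewhat more direct route than the paper's: having observed that $Q_1,Q_2$ (hence every $Q_1-\beta_k Q_2=(b_k-\ell_k)(b_k+\ell_k)$) is a quadratic over $V=\spn\{b_i,\ell_i,b_j,\ell_j\}$, you force $b_k,\ell_k\in\spn\{x_1,\dots,x_4\}$ via an explicit coordinate change and support split, with linear independence of $Q_1,Q_2$ ruling out the degenerate alternative $b_k=\pm\ell_k$. The paper instead equates two different representations of $Q_1$ in terms of the products $(b_1\pm\ell_1),(b_j\pm\ell_j),(b_k\pm\ell_k)$, needing a separate subcase when $Q_1$ is a scalar multiple of $(b_j-\ell_j)(b_j+\ell_j)$, and then appeals (implicitly) to unique factorization of quadratics over $V$; your version spells out the underlying support argument and avoids that subcase. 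One small cosmetic fix: after the coordinate change, take the output space to be all of $\spn\{x_1,\dots,x_4\}$ rather than your original $V$, since that is the $4$-dimensional space your argument actually places every $b_k,\ell_k$ in (your $V$ might have dimension strictly less than $4$).
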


\begin{proof}
If $m\leq 2$ then the claim is trivial. We consider two cases.

\begin{enumerate}
\item {\bf Case 1: For all $i$, $\beta_i=\beta_1$:} \hfill \break
Let $V=\spn\{b_1,\ell_1\}$.
From the two representations of $F_1$ we get that
\begin{equation}\label{eq:2-2i}
Q_1 - \beta_1 Q_2 = b_1^2 - \ell_1^2 = (b_1-\ell_1)\cdot (b_1+\ell_1)\neq 0,
\end{equation}
where the fact that the expression above is nonzero follows as $Q_1$ and $Q_2$ are linearly indepdent.
Similarly, by considering the two representations of $F_j$ we get that
$$Q_1 - \beta_1 Q_2 = {b}_j^2 - {\ell}_j^2 = (b_j-\ell_j)\cdot (b_j+\ell_j).$$
Thus, $$(b_1-\ell_1)\cdot (b_1+\ell_1)=  (b_j-\ell_j)\cdot (b_j+\ell_j)\;.$$
Unique factorization implies that $b_j,\ell_j \in \spn\{b_i,\ell_1\} = V$ as claimed.

\item {\bf Case 2:  There is $j$ such that $\beta_j\neq \beta_1$:} \hfill\break
In this case we have that
$$F_1 = Q_1 + \ell_1^2 = \beta_1 Q_2 + {b}_1^2,$$
$$F_j = Q_1 + \ell_j^2 = \beta_j Q_2 + {b}_j^2,$$
and the matrix 
$$\begin{bmatrix} 1 & -\beta_1 \\
1 & -\beta_j
\end{bmatrix}
$$
is invertible. It follows that 
\begin{equation}\label{eq:Q1-span}
Q_2,Q_1 \in \spn\{(b_1-\ell_1)\cdot (b_1+\ell_1),(b_j-\ell_j)\cdot (b_j+\ell_j)\}.
\end{equation}
Let $$V = \spn\{b_1,b_j,\ell_1,\ell_j \}.$$ Consider any index $k$. W.l.o.g. $\beta_k\neq \beta_j$. Thus, as before, we get that 
$$Q_1 \in \spn\{(b_j-\ell_j)\cdot (b_j+\ell_j),(b_k-\ell_k)\cdot (b_k+\ell_k)\}.$$
Hence, either $Q_1 = \alpha \cdot (b_j-\ell_j)\cdot (b_j+\ell_j)$ or, for some $\alpha_3$ and a nonzero $\alpha_4$, 
\begin{equation}\label{eq:Q_1-2}
Q_1 =  \alpha_3   \cdot (b_j-\ell_j)\cdot (b_j+\ell_j) + \alpha_4 \cdot (b_k-\ell_k)\cdot (b_k+\ell_k).
\end{equation}
We first handle the later case. Combining \autoref{eq:Q_1-2} with \autoref{eq:Q1-span} we get that there exist constants $\alpha_1,\ldots,\alpha_4$, with $\alpha_4\neq 0$ so that
$$\alpha_1 \cdot (b_1-\ell_1)\cdot (b_1+\ell_1) + \alpha_2 \cdot (b_j-\ell_j)\cdot (b_j+\ell_j) = \alpha_3   \cdot (b_j-\ell_j)\cdot (b_j+\ell_j) + \alpha_4 \cdot (b_k-\ell_k)\cdot (b_k+\ell_k).$$
By switching sides it is easy to see that both $b_k-\ell_k$ and $b_k+\ell_k$ are spanned by the functions in $V$. In the former case where $Q_1 = \alpha \cdot (b_j-\ell_j)\cdot (b_j+\ell_j)$, it follows from \autoref{eq:F-two-r} (for $i=j$) that
$Q_2 = \frac{\alpha_1-1}{\beta_j} \cdot (b_j-\ell_j)\cdot (b_j+\ell_j)$. This contradicts the assumption that $Q_1,Q_2$ are linearly independent.
\end{enumerate}
\end{proof}

\begin{corollary}\label{cor:2-2}
Under the hypothesis of \autoref{cla:2-2}, there exist four linear functions $a_1,a_2,a_3,a_4$ such that every $F_i$ is a linear combination of $Q_1,\{a_i\cdot a_j\}_{i\leq j}$.
\end{corollary}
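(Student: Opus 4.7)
The plan is to apply \autoref{cla:2-2} directly and then just expand each $\ell_i^2$ in a basis of the $4$-dimensional space produced by the claim. Specifically, \autoref{cla:2-2} gives us a linear subspace $V$ of dimension $4$ such that $\ell_i, b_i \in V$ for every $i$. Pick any basis $a_1, a_2, a_3, a_4$ of $V$. Then each $\ell_i$ can be written as $\ell_i = \sum_{j=1}^{4} \gamma_{i,j}\, a_j$ for some scalars $\gamma_{i,j}$.

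Squaring this expression yields
\[
\ell_i^2 \;=\; \sum_{1 \leq j \leq k \leq 4} \delta_{i,j,k}\, a_j \cdot a_k,
\]
where $\delta_{i,j,k} = \gamma_{i,j}^2$ when $j=k$ and $\delta_{i,j,k} = 2\gamma_{i,j}\gamma_{i,k}$ when $j<k$. In particular, $\ell_i^2$ lies in the linear span of the ten products $\{a_j \cdot a_k\}_{1 \leq j \leq k \leq 4}$. Using the hypothesis $F_i = Q_1 + \ell_i^2$, it follows immediately that $F_i$ is a linear combination of $Q_1$ together with the products $\{a_j \cdot a_k\}_{1 \leq j \leq k \leq 4}$, which is exactly the claim of the corollary.

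There is no genuine obstacle here: the entire content of the corollary is already bundled inside \autoref{cla:2-2}, and the step from ``all $\ell_i$ lie in a common $4$-dimensional space'' to ``all $\ell_i^2$ lie in the span of the ten pairwise products of a fixed basis'' is just a reformulation in terms of a chosen basis of $V$. The only thing worth flagging is that we do not need the $b_i$'s for this statement; the conclusion about $F_i$ is obtained via the representation $F_i = Q_1 + \ell_i^2$ alone, with the role of the $b_i$'s having already been used inside the proof of \autoref{cla:2-2} to constrain $V$ to be $4$-dimensional.
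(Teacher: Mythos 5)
Your argument is correct and is essentially identical to the paper's own proof: both take a basis $a_1,\dots,a_4$ of the $4$-dimensional space $V$ from \autoref{cla:2-2}, write each $\ell_i$ as a linear combination of the $a_j$, and conclude from $F_i = Q_1 + \ell_i^2$. Your version merely makes the expansion of $\ell_i^2$ explicit, which the paper leaves to the reader.
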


\begin{proof}\sloppy
Let $V$ be the subspace guaranteed by  \autoref{cla:2-2}. Let $\{a_1,\ldots,a_4\}$ be such that  $V=\spn\{a_1,a_2,a_3,a_4\}$. The claim follows immediate from the fact that each $\ell_i$ is a linear combination of $a_1,a_2,a_3,a_4$.
\end{proof}

\begin{claim}\label{cla:3}
Let $F_1,\ldots,F_{m'}$ be quadratics in our set\footnote{I.e. they are a subset of the $\{Q_i\}$ from the statement of \autoref{thm:main-sg-intro}.} that satisfy Case~\ref{case:2} of \autoref{thm:structure} with an irreducible $Q$. Then there exists an $O(1)$-dimensional space $V$ such that each $F_i$ is a quadratic polynomial in the linear functions in $V$.
\end{claim}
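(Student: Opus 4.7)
The plan is to reduce the problem to a Sylvester--Gallai question on linear forms, essentially along the lines of Section~\ref{sec:proof-idea}. First, I would pin down the shape of $Q$: since each $F_i$ satisfies Case~\ref{case:2} with the irreducible $Q$, there exist linearly independent linear forms $b_1^{(i)}, b_2^{(i)}$ (independence follows because $Q$ is irreducible, so no linear form divides it) with $Q, F_i \in \sqrt{(b_1^{(i)}, b_2^{(i)})} = (b_1^{(i)}, b_2^{(i)})$. Writing $Q = b_1^{(i)} u_i + b_2^{(i)} v_i$ forces $\rank_s(Q) \le 2$, and irreducibility gives $\rank_s(Q) = 2$. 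Fix a minimal representation $Q = a_1 a_2 + a_3 a_4$ and set $U = \spn\{a_1, a_2, a_3, a_4\}$; by \autoref{cla:irr-quad-span} this is a well-defined $4$-dimensional space, and since $Q = b_1^{(i)} u_i + b_2^{(i)} v_i$ is also minimal, the forms $b_1^{(i)}, b_2^{(i)}, u_i, v_i$ all lie in $U$. Consequently every $F_i = b_1^{(i)} p_i + b_2^{(i)} q_i$ lies in the ideal $(U)$. Choose coordinates so $U = \spn\{x_1, x_2, x_3, x_4\}$ and $Q = x_1 x_2 + x_3 x_4$.

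The key device is the ring homomorphism $\pi \colon \C[x_1, \ldots, x_n] \to \C[z, x_5, \ldots, x_n]$ with $\pi(x_i) = z$ for $i \le 4$ and $\pi(x_j) = x_j$ for $j \ge 5$. Then $\pi(Q) = 2z^2$, and since $F_i \in (U)$ we get $\pi(F_i) = z \cdot \ell_i$ for a linear form $\ell_i \in \C[z, x_5, \ldots, x_n]$. Now apply the SG hypothesis: for a pair $F_i, F_j$ with $\ell_i, \ell_j$ linearly independent and $z \notin \spn\{\ell_i, \ell_j\}$, there is a $Q_k$ distinct from $F_i, F_j$ in the original set satisfying $Q_k \in \sqrt{(F_i, F_j)}$. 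Pushing through $\pi$ gives $\pi(Q_k) \in \sqrt{(z \ell_i, z \ell_j)} = (z) \cap (\ell_i, \ell_j)$, which forces either $\pi(Q_k) = 0$ or $\pi(Q_k) = z \cdot M$ with $M \in \spn\{\ell_i, \ell_j\}$. Passing to the enlarged set $\tilde{\cF}$ of polynomials $G$ in the original set with $\pi(G) = z \cdot \ell_G$ for a nonzero $\ell_G$, the associated linear forms $\{\ell_G\}$ satisfy an SG-like condition with only a controlled number of exceptional pairs. Invoking the robust SG theorem \autoref{thm:bdwy} (together with \autoref{cor:bdwy} to absorb finitely many bad points) then yields $\dim \spn\{\ell_G\} = O(1)$, and in particular $\dim \spn\{\ell_i\} = O(1)$.

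To convert this into a bound on $\dim \spn\{F_i\}$, I would rerun the same argument with four variant substitutions $\pi^{(k)}$, $k \in \{1, 2, 3, 4\}$, each keeping $x_k$ and $x_{\ge 5}$ intact while setting the other three coordinates of $U$ to zero. Under $\pi^{(k)}$ each $F_i$ becomes $x_k \cdot \ell_i^{(k)}$, and the same SG argument gives $\dim \spn\{\ell_i^{(k)}\} = O(1)$ for every $k$. Decomposing $F_i = \sum_{j \le 4} x_j L_{i, j} + Q_i^U$ uniquely with $L_{i, j} \in V := \spn\{x_5, \ldots, x_n\}$ and $Q_i^U \in \mathrm{Sym}^2(U)$, one checks $\ell_i^{(k)} = L_{i, k} + c_{i, kk}\, x_k$; hence the $L_{i, k}$'s live collectively in an $O(1)$-dimensional subspace of $V$. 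Combined with $\dim \mathrm{Sym}^2(U) = 10$, this yields $\dim \spn\{F_i\} = O(1)$. Since each $F_i$ has $\rank_s \le 2$ and hence uses at most four linear forms, this is equivalent to the existence of an $O(1)$-dimensional space $V$ of linear forms so that every $F_i$ is a quadratic polynomial in the linear forms of $V$.

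The hard part will be the second paragraph: verifying that the SG-type condition on $\tilde{\cF}$ is strong enough to invoke the robust SG theorem. The subtlety is that SG partners $Q_k$ may either be killed by $\pi$, or may not themselves belong to our Case-\ref{case:2} subset (and so their $\pi$-images need not have the form $z \cdot \ell$). Controlling these exceptional configurations requires applying the structure theorem \autoref{thm:structure} to each relevant triple $(F_i, F_j, Q_k)$, carefully distinguishing Cases~\ref{case:span}, \ref{case:rk1}, and \ref{case:2}, and verifying that the ``bad'' pairs form a small enough fraction to preserve the $\delta$-SG hypothesis required by \autoref{thm:bdwy}.
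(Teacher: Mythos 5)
Your overall plan is the paper's plan: pass to the span $U$ of the linear forms in a minimal representation of $Q$, project those forms onto a single new variable $z$ so that each $F_i$ becomes $z\cdot\ell_i$, show the $\ell_i$'s satisfy a Sylvester--Gallai condition, and then reconstruct the $F_i$'s from several such projections. However, there are concrete gaps, all stemming from the same source: you use a \emph{deterministic} projection (all four basis elements of $U$ sent to $z$ with coefficient $1$), whereas the paper sends the basis elements to \emph{random} multiples of $z$.

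With the deterministic $\pi$, two things can go wrong and you have no control over how often. First, $\pi(F_i)$ can vanish identically: if $F_i = b_1 p_1 + b_2 p_2$ with $b_1,b_2$ in the $3$-dimensional kernel of $\pi|_U$, then $\pi(F_i)=0$; you only propose to throw such $F_i$ out, but nothing bounds how many are lost, so the conclusion says nothing about them. Second, and more subtly, two linearly independent $F_i, F_j$ can become proportional after $\pi$. This is fatal for the Sylvester--Gallai argument, because when $Q_k\in\sqrt{(F_i,F_j)}$ and $\pi(Q_k)=z b_k$, you need $b_k$ to be a \emph{third} point not proportional to $b_i$ or $b_j$; if images collide, the $\delta$-SG hypothesis of \autoref{thm:bdwy} simply fails. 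The paper addresses exactly this with \autoref{cla:still-indep}: under a random projection of $V$ to multiples of $z$, any two linearly independent irreducible quadratics over $V$-forms-plus-rest stay linearly independent with probability $1$ unless both are polynomials purely over $V$. Your write-up has no analogue of this lemma, and without randomness none is available. You flag "controlling exceptional configurations" as the hard part, but the fix is not more casework on \autoref{thm:structure} --- it is replacing $\pi$ by a random projection, which makes the exceptional configurations measure zero.

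The reconstruction step has the same problem in a sharper form. Your four substitutions $\pi^{(k)}$ set three of the four coordinates of $U$ to $0$. Each such $\pi^{(k)}$ kills $Q$ (e.g.\ $\pi^{(1)}(x_1x_2+x_3x_4)=0$) and also kills any $F_i$ whose two defining forms $b_1^{(i)},b_2^{(i)}$ avoid $x_k$ and whose $U$-part has no $x_k^2$ term --- a plentiful event. Again there is no bound on how much of the configuration survives each $\pi^{(k)}$, and the paper's determinant reconstruction (run the \emph{same} kind of random projection twice with independent coefficients $\alpha_i,\beta_i$, so $b = \sum\alpha_i u_i$ and $b'=\sum\beta_i u_i$ pin down $u_1,u_2$ via an a.s.\ invertible $2\times2$ matrix) is both simpler and actually works. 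I'd recommend replacing your deterministic $\pi$ and $\pi^{(k)}$ by random projections, adding the independence-preservation lemma, and then your outline goes through.
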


\begin{proof}
As $Q$ satisfies Case~\ref{case:2} of \autoref{thm:structure} and is irreducible it follows that $\rank_s(Q)= 2$ (recall \autoref{def:rank-s}). Thus, $Q$ is a quadratic polynomial in at most $4$ linear functions. Let $V$ to be the space spanned by the linear functions in a minimal representation of $Q$. By \autoref{cla:irr-quad-span} it follows that $V$ is well defined. Clearly $\dim(V)\leq 4$. 

Let $z$ be a new variable. Set each basis element of $V$ to a random multiple of $z$ (say by picking the multiples independently uniformly at random from $[0,1]$). Each $F_i$ now becomes $z\cdot b_i$ for some nonzero $b_i$. Indeed, if we further set $z=0$ then all linear functions in the representation of $Q$ vanish and hence also $F_i$ vanishes (this again follows from \autoref{cla:irr-quad-span}). Further, $b_i\neq 0$ as we mapped the basis elements to random multiples of $z$. We next show that unless all linear functions in the minimal representation of $F_i,F_j$ are in $V$ then $F_i,F_j$ remain linearly independent after this restriction.

\begin{claim}\label{cla:still-indep}
Let $V$ be a linear space of linear functions. Let $F = v_1 \cdot \ell_1 + v_2 \cdot \ell_2$ and  $G = v_3 \cdot \ell_3 + v_4 \cdot \ell_4$ be two linearly independent irreducible quadratics, where for every $i$, $v_i\in V$. If $\spn\{\ell_1,\ldots,\ell_4\} \not\subseteq V$ then with probability $1$, $F$ and $G$ remain linearly independent even after we map the basis elements of $V$ to random multiples of a new variable $z$ (say, by picking the multiples uniformly and independently from the segment $[0,1]$).
\end{claim}

We postpone the proof of \autoref{cla:still-indep} and continue with the proof of \autoref{cla:3}.
We next show that the linear functions $\{b_i\}_i\cup \{z\}$ satisfy the ``usual'' Sylvester-Gallai condition, i.e., that any two of them span a third function in the set (with the possible exception of $z$). In fact, we will add to this set all quadratics in our set that are now of the form $z\cdot \ell$ for a linear $\ell$. 

Consider two quadratics $Q_1=zb_1,Q_2=zb_2$ so that  neither $b_1$ nor $b_2$ is a multiple of $z$. If $\{b_1,b_2\}$ span $z$ then we are done. Otherwise, assume that $Q_3$ vanishes when $Q_1$ and $Q_2$ vanish. Then clearly $z$ divides $Q_3$. Thus $Q_3=zb_3$ and $b_3$ is in our set. Further, when we set $b_1=b_2=0$ both $Q_1$ and $Q_2$ vanish and hence $Q_3$ vanishes as well. Since $z\not \in \spn\{b_1,b_2\}$ this implies that $b_3 \in  \spn\{b_1,b_2\}$ and in this case too $b_1$ and $b_2$ span a third linear function in $\{b_i\}_i\cup \{z\}$. Note also that, by \autoref{cla:still-indep}, $b_3$ is not a multiple of $b_1$ nor of $b_2$ as this would imply that $Q_3$ and $Q_1$ (or $Q_2$) are linearly dependent in contradiction to our assumption. 

From \autoref{cor:bdwy} (recalling \autoref{rem:span-pass}) we get that the dimension of all those $\{b_i\}_i$ is $O(1)$. 

We now  repeat the same argument again for a different random mapping of the basis elements of $V$ to multiples of $z$. As before each $F_i$ is mapped to a polynomial of the form $z\cdot b'_i$ and again the dimension of $\{b'_i\}_i$ is $O(1)$. Let $U$ be the subspace containing the span of $V\cup \{b_i\}_i\cup \{b'_i\}_i$. Clearly $\dim(U)=O(1)$. We next show that every $F_i$ is a polynomial in the linear functions in $U$. Indeed, let $F = v_1 \cdot u_1 + v_2 \cdot u_2$ be arbitrary polynomial from $\{F_i\}_i$, where $v_1,v_2\in V$. Assume the first mapping mapped $v_i \mapsto \alpha_i \cdot z$ and the second mapping is $v_i \mapsto \beta_i \cdot z$. Then,  $F$ was mapped to $z\cdot b$ under the first mapping where $b = \alpha_1 u_1 + \alpha_2 u_2$ and to $z\cdot b'$ under the second mapping where $b' = \beta_1 u_1 + \beta_2 u_2$. As $\alpha_1,\alpha_2,\beta_1,\beta_2$ where chosen uniformly independently at random from $[0,1]$ it follows that 
the determinant $$\left|\begin{bmatrix} \alpha_1&  \alpha_2 \\
\beta_1 & \beta_2
\end{bmatrix}\right| \neq 0$$ and hence
$u_1,u_2 \in \spn\{b,b'\}\subseteq U$. As we also have $v_1,v_2\in V \subseteq U$ the claim follows.

This concludes the proof of \autoref{cla:3}.
\end{proof}

We now give the proof of  \autoref{cla:still-indep}.

\begin{proof}[Proof of  \autoref{cla:still-indep} ]
Let $x_1,\ldots,x_k$, for some $1\leq k\leq 4$ be a basis for    $\spn\{\ell_1,\ldots,\ell_4\} $ such that for some $0<t\leq k$, $x_{t+1},\ldots,x_k$ for a basis to $\spn\{\ell_1,\ldots,\ell_4\} \cap V$. We can rewrite $F$ and $G$ as 
$$F = \sum_{i=1}^{t}x_i u_i + F' \quad \text{ and } \quad G = \sum_{i=1}^{t}x_i w_i + G'$$
where $u_i,w_i\in V$ and $F',G'$ are defined over $V$, and, w.l.o.g., for every $i$, at least one of $u_i$ and $w_i$ is nonzero. 
Observe that $F$ and $G$ are linearly independent (over $\C$) if and only if the two vectors
$$u_F=(u_1,\ldots,u_t,F') \quad \text{ and } \quad w_G=(w_1,\ldots,w_t,G')$$
are linearly independent over $\C(V)$, the function field generated by adding the linear functions in $V$ to $\C$. 
Indeed, if $F$ and $G$ are linearly dependent over $\C$ then clearly $u_F$ and $w_G$ are linearly dependent over $\C$, and hence over $\C(V)$. If on the other hand $u_F$ and $w_G$ are linearly dependent over $\C(V)$ then this means that for some polynomials $f(V)$ and $g(V)$ we have
$$f \cdot (u_1,\ldots,u_t,F') = g  \cdot (w_1,\ldots,w_t,G').$$
This implies that all the $2\times 2$ determinants vanish, i.e. that $u_i\cdot w_j - u_j \cdot w_i=0$, for every $i$ and $j$, and $u_i \cdot G' - w_i \cdot F'=0$. 
By unique factorization we get that there are two possible cases:
\begin{enumerate}
\item There is $\alpha\in\C$ so that $u_i = \alpha w_i$ for some $i$: The equality $u_i\cdot w_j - u_j \cdot w_i=0$ implies that for every $j$ we actually have $u_j = \alpha w_j$, and the fact that $u_i \cdot G' - w_i \cdot F'=0$ gives $F'=\alpha G'$ and thus $u_F$ and $w_G$ are linearly dependent over $\C$ and indeed $F$ and $G$ are linearly dependent.
\item There are constants $\alpha_i$ such that for every $i$, $u_i= \alpha_i u_1$ and $w_i = \alpha_i w_1$: In this case, since $F$ is irreducible, it holds that $u_1$ does not divide $F'$. As $u_1 \cdot G' - w_1 \cdot F'=0$, by unique factorization it follows that $u_1$ is a multiple of $w_1$ and we are thus in the previous case again.
\end{enumerate}
It therefore follows that the matrix
$$M= \begin{bmatrix} u_1& \ldots & u_j & F' \\
w_1 & \ldots & w_j & G'
\end{bmatrix}
$$
is full rank over $\C(V)$. Thus the determinant of\footnote{$M^\dagger$ is the conjugate transpose of $M$.} $M\cdot M^\dagger$ is a nonzero polynomial over $V$. The Schwartz-Zippel-DeMillo-Lipton lemma now implies that sending each basis element of $V$ to a random multiple of $z$ will make the determinant nonzero with probability $1$. This also means that $F$ and $G$ remain linear independent after such mapping.
\end{proof}


\subsection{An important special case}

Before proving \autoref{thm:main-sg-intro}, we prove a special case where there is a set of quadratics $\cI$, and a vector space of linear forms $V$, and each  quadratic in our  set is a linear combination of quadratics from $\cI$ and a quadratics defined over $V$, and, all nonzero polynomials in the span of $\cI$ remain of rank at least $2$ even when we set the functions in $V$ to zero. We show that in this case the linear forms in $\cL$ satisfy the Sylvester-Gallai condition among themselves. 

\begin{claim}\label{cla:linear-remainder}
Let $\cQ\cup \cL$ satisfy the assumption of \autoref{thm:main-sg-intro} where 
\begin{enumerate}
\item $\cQ$ consists of irreducible quadratics.
\item There is a set of polynomials  $\cI$ and an $O(1)$-dimensional space $V$ such that every polynomial in $\cQ$ is in the linear span of $\cI$ and quadratics over $V$. Furthermore, no nonzero linear combination of the polynomials in $\cI$ can be expressed as $xa+yb+F(V)$ where $F$ is any quadratic over $V$ and $x,a,b,y$ are any four linear forms.
\item $\cL$ is a set of squares of linear functions. 
\end{enumerate}
Then, the dimension of the space spanned by the functions whose squares are in $\cL$ has dimension $O(1)$.
\end{claim}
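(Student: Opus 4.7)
The plan is to reduce to the classical Sylvester--Gallai theorem applied to the set of linear forms $\cL' = \{\ell : \ell^2 \in \cL\}$, after quotienting out by the $O(1)$-dimensional space $V$. First I will partition $\cL' = \cL'_V \sqcup \cL'_{\neg V}$ according to whether the form lies in $V$ or not, and choose a complement $V'$ of $V$ in $(\C^n)^*$ so that each $\ell_i \in \cL'_{\neg V}$ decomposes uniquely as $\ell_i = v_i + w_i$ with $v_i \in V$ and $w_i \in V' \setminus \{0\}$. The contribution $\cL'_V$ is automatically contained in the $O(1)$-dimensional space $V$, so the task reduces to bounding $\dim \spn \{w_i\}$ inside $V'$.

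The core step will be to verify that the projected set $\{[w_i]\}$ satisfies the classical Sylvester--Gallai hypothesis. Fix $\ell_i,\ell_j \in \cL'_{\neg V}$ with $w_i,w_j$ linearly independent. Since the ideal $(\ell_i,\ell_j)$ is prime and $\sqrt{(\ell_i^2,\ell_j^2)} = (\ell_i,\ell_j)$, the hypothesis produces a $Q_k \in \cQ \cup \cL$ of the form $Q_k = \ell_i a + \ell_j b$ with $a,b$ linear. If $Q_k = \ell_k^2 \in \cL$, then $\ell_k \in (\ell_i,\ell_j)$ is itself a scalar combination $\alpha\ell_i + \beta\ell_j$, and pairwise linear independence forces both $\alpha,\beta \neq 0$, so projecting gives $w_k = \alpha w_i + \beta w_j$ projectively distinct from $w_i$ and $w_j$. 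If instead $Q_k \in \cQ$, I write $Q_k = \sum_s \alpha_s I_s + G(V)$; then $\sum_s \alpha_s I_s = \ell_i a + \ell_j b - G(V)$ is in the forbidden shape $xa + yb + F(V)$, so by the hypothesis on $\cI$ the combination $\sum_s \alpha_s I_s$ must vanish, whence $Q_k = G(V)$ is a quadratic purely in $V$-variables. But with $w_i,w_j$ linearly independent in $V'$, the system $\ell_i = \ell_j = 0$ is solvable in the $V'$-coordinates for any choice of $V$-coordinates, so the projection of $\{\ell_i = \ell_j = 0\}$ onto the $V$-coordinates is surjective; a $V$-only quadratic vanishing there must be identically zero, contradicting $Q_k \neq 0$.

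Hence only the first subcase can occur, and the classical Sylvester--Gallai theorem applied to the projective points $\{[w_i]\}$ yields $\dim \spn\{w_i\} \leq 2$. Then $\spn \cL'_{\neg V} \subseteq V + \spn\{w_i\}$ has dimension at most $\dim V + 2 = O(1)$, and adding $\spn \cL'_V \subseteq V$ completes the proof. The main obstacle is ruling out the $Q_k \in \cQ$ subcase: this is exactly where both the structural assumption on $\cI$ (used to reduce $Q_k$ to a quadratic over $V$) and the linear independence of $w_i,w_j$ in $V'$ (used to force $Q_k \equiv 0$) are simultaneously needed; everything else is linear-algebraic bookkeeping around the quotient by $V$.
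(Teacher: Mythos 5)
Your proof is correct, and its skeleton is essentially the same as the paper's: project the relevant linear forms modulo $V$ and verify a Sylvester--Gallai condition on the images. That said, you make two genuine simplifications worth pointing out. First, where the paper sends $V$ to scalar multiples of a single fresh variable $z$ by a random projection and then invokes the robust bound (\autoref{cor:bdwy}) with $z$ playing the role of a possible exceptional point, you quotient $V$ out entirely and observe that when $w_i,w_j$ are linearly independent the third form $\ell_k=\alpha\ell_i+\beta\ell_j$ has nonzero $V'$-part $\alpha w_i+\beta w_j$ (since $\alpha,\beta\neq 0$ by pairwise independence), so it can never land in $V$; this removes the need for the exceptional point and lets you cite the plain Kelly/Sylvester--Gallai theorem. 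Second, in the $Q_k\in\cQ$ subcase the paper concludes $x,y\in V$ via \autoref{cla:irr-quad-span} (uniqueness of the span of a minimal representation of an irreducible quadratic), while you instead reduce $Q_k$ to a quadratic over $V$ alone and then use surjectivity of the projection of the affine set $\{\ell_i=\ell_j=0\}$ onto the $V$-coordinates to force $Q_k\equiv 0$; both give the contradiction, and yours avoids the rank-uniqueness lemma. One small cosmetic note: over $\C$ the collinearity conclusion is Kelly's theorem, so the projected points lie in a complex $2$-flat and $\dim\spn\{w_i\}$ is bounded by $3$ rather than $2$; this of course does not affect the $O(1)$ conclusion.
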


\begin{proof}
Denote $\cL' = \cL\setminus V$. We  shall prove that  the linear functions in $\cL'$ satisfy the Sylvester-Gallai condition and hence their span has dimension  $O(1)$ as claimed.

Let $x,y\in \cL'$. Let $Q$ be such that $Q\in \sqrt{(x,y)}$. Thus, there exist linear functions $a,b$ so that $Q=xa+yb$. We next consider two cases for $Q$. 

If $Q\in \cQ$ then $Q= Q' + G(V)$, where $Q'$ is a linear combination of the polynomials in $\cI$. In particular, 
$Q' = xa+yb - G(V)$. This implies that $Q'=0$ as otherwise we get a contradiction to  the assumptions on $\cI$ and $V$. Hence, $x a + y b=Q=G(V)$. As $Q$ is irreducible it must hold that $x,y\in V$ (by \autoref{cla:irr-quad-span}). This is in contradiction to the definition of $\cL'$.

The remaining case is when $Q\in \cL$. Thus, $Q=\ell^2$ for some linear  $\ell$, and it follows that $\ell \in \spn\{x,y\}$. Note however that we may have $\ell \in V$. To overcome this we  apply a random projection to the linear functions in $V$ so that they are all equal to some multiple of a new variable $z$. As before it is not hard to see that even after this projection any two linear functions from $\cL'$ are projected to  linearly independent linear functions. Hence, in the case above, there is a third linear function in $\cL' \cup \{z\}$ that is spanned by $x,y$.
It follows that $\cL'\cup\{z\}$ satisfy the conditions of \autoref{cor:bdwy} (with, say, $\delta=1/2$) and hence $\dim(\cL')=O(1)$ as claimed.
\end{proof}

\subsection{The proof}

We are now ready to prove  \autoref{thm:main-sg-intro}. The proof follows the outline sketched in \autoref{sec:proof-idea} and it relies on the claims proved in \autoref{sec:claims-sg} and on \autoref{cor:bdwy}.

\begin{proof}[Proof of \autoref{thm:main-sg-intro}]
Partition the polynomials to two sets. Let $\cL$ be the set of all squares and let $\cQ$ be the subset of irreducible quadratics. Denote $|\cQ|=m_1$.

We next focus on polynomials in $\cQ$. We prove that they are contained in an $O(1)$-dimensional space of a special form.

Call a polynomial $Q\in \cQ$ bad if there are less than, say, $m_1/100$ pairs $(Q_1,Q_2)\in \cQ\times \cQ$ so that $Q_2 \in \sqrt{(Q,Q_1)}$ and $Q,Q_1$ satisfy \autoref{case:span} of \autoref{thm:structure} (i.e. $Q_2$ is in their linear span). If $Q\in\cQ$ is not bad then we call it a good polynomial. We handle two cases according to whether there is at most one bad polynomial or more than that.

\begin{enumerate}
\item {\bf There is at most one bad polynomial:} \hfill

In this case, from \autoref{cor:bdwy} we get that the linear span of the polynomials in $\cQ$ has dimension $O(1)$.

Assume $Q_1,\ldots,Q_k$ for some $k=O(1)$ span $\cQ$. We now repeat the following process. We start with $\cI=\{Q_1,\ldots,Q_k\}$ and $V=\emptyset$. If there is some nontrivial linear combination of the polynomials in $\cI$ that is equal to a quadratic of the form $a_1b_1+ a_2b_2$, where $a_i,b_i$ are linear functions then we add $a_1,a_2,b_1,b_2$ to $V$ and remove one of the polynomials that participated in the linear combination from $\cI$. We continue doing so according to the following rule. If there exists a linear combination  of the polynomials in $\cI$ that is equal to a polynomial of the form $F(V) + a b + a' b'$, where $F(V)$ is a quadratic polynomial over linear functions in $V$, then we add $a,b,a',b'$ to $V$ and  remove some polynomial participating in the linear combination from $\cI$.
We do so until no such linear combination exists or until $\cI$ is empty. At the end $|V|\leq 4k = O(1)$. Abusing notation we now think of $V$ as the space spanned by the linear functions in it. Clearly $\dim(V)\leq 4k = O(1)$.

The argument above implies that the conditions of \autoref{cla:linear-remainder} are satisfied by our $\cI$, $V$, $\cQ$ and $\cL$. We thus obtain that  $\dim(\cL)=O(1)$. Combined with the fact that $|\cI|=O(1)$ this completes the proof for the case when there is at most one bad polynomial. We handle the other case next.

\item {\bf There are at least two bad polynomials:}

\begin{claim}[At least two bad polynomials]\label{cla:2-bad}
If $\cQ$ contains at least two bad polynomials, $Q_1$ and $Q_2$, then there is a space $V$ of linear functions of dimension $O(1)$ so that every polynomial in $\cQ$ is a linear combination of $Q_1$ and a quadratic over $V$.
\end{claim}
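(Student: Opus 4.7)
The plan is to exploit the badness of both $Q_1$ and $Q_2$ to force most polynomials of $\cQ$ into the structured cases of \autoref{thm:structure}, and then to assemble the resulting structure into the desired form. Let $\cE_i \subseteq \cQ$ be the set of $Q$ for which some third polynomial from $\cQ$ lies in $\spn(Q_i,Q)\setminus\{Q_i,Q\}$; the badness hypothesis yields $|\cE_i| < m_1/100$, so the ``most'' set $\cM \eqdef \cQ \setminus (\cE_1 \cup \cE_2 \cup \{Q_1,Q_2\})$ has at least $(1-\tfrac{1}{50})m_1 - 2$ elements. For every $Q \in \cM$ the SG witness for $(Q_1,Q)$ cannot satisfy Case~\ref{case:span} of \autoref{thm:structure} (since $Q \notin \cE_1$), so it must fall under Case~\ref{case:rk1} or Case~\ref{case:2}, and similarly for $(Q_2,Q)$. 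This partitions $\cM$ into four sub-cases according to which pair of cases applies.

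For the sub-case where Case~\ref{case:rk1} holds with both $Q_1$ and $Q_2$, irreducibility of $Q$ forces the coefficient of $Q$ to be nonzero in each Case~\ref{case:rk1} relation, so (working over $\C$ we may take square roots) after rescaling one can write $Q/\alpha_Q = Q_1 + \tilde\ell_Q^{\,2} = \tilde\beta_Q Q_2 + \tilde b_Q^{\,2}$; \autoref{cla:2-2} then produces a $4$-dimensional space $V_{12}$ containing every $\tilde\ell_Q, \tilde b_Q$, and hence also the original linear forms (which are scalar multiples thereof). In the remaining three sub-cases at least one of $(Q_1,Q)$ or $(Q_2,Q)$ satisfies Case~\ref{case:2}, so \autoref{cla:3} applied with the irreducible reference $Q_1$ (resp.\ $Q_2$) produces $O(1)$-dimensional spaces $V_{Q_1}, V_{Q_2}$ of linear forms such that the corresponding $Q$'s are quadratic polynomials in those linear forms. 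Setting $V_0 \eqdef V_{12}+V_{Q_1}+V_{Q_2}$, every $Q \in \cM$ is a linear combination of $Q_1$ and a quadratic over $V_0$.

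It remains to express $Q_2$ and the exceptions $\cE_1 \cup \cE_2$ in the required form. Applying SG to $(Q_1,Q_2)$ and \autoref{thm:structure}: in Cases~\ref{case:rk1} or~\ref{case:2} one adds at most four new linear forms to $V$, obtaining $Q_2 = \alpha Q_1 + G$ with $G$ a quadratic over $V$; in Case~\ref{case:span} a witness from $\cM$ passes its $V_0$-expression to $Q_2$. For $Q \in \cE_1 \setminus \cE_2$, applying SG to $(Q_2,Q)$ must give a Case~\ref{case:rk1} or Case~\ref{case:2} witness (since $Q \notin \cE_2$); the latter yields $Q$ as a quadratic over $V_{Q_2}$ directly, and the former, combined with $Q \in \spn(Q_1,R_Q)$ (from $Q \in \cE_1$) and the established expression for $Q_2$, forces $\ell_Q^{\,2}$ into the form $s_Q Q_1 + H$ for some scalar $s_Q$ and quadratic $H$ over $V$. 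If $s_Q = 0$ then $\ell_Q \in V$ by unique factorization of quadratics; otherwise $Q_1$ has $\rank_s \leq \dim V + 1$ and its (unique) minimal representation can be absorbed into $V$, restoring $\ell_Q \in V$ for all subsequent exceptions. A symmetric treatment handles $\cE_2 \setminus \cE_1$.

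The main obstacle is the exceptions lying in $\cE_1 \cap \cE_2$, where Case~\ref{case:span} applies to the SG witnesses with both $Q_1$ and $Q_2$. I would handle these by a pencil analysis: each such $Q$ lies on a pencil $\spn(Q_1,R_Q) \cap \cQ$ through $Q_1$, and once any polynomial on that pencil has been expressed (via $\cM$, via $Q_2$, or via the preceding step), the relation $Q = \gamma Q_1 + \delta R$ propagates the expression to $Q$. The delicate sub-case in which such a pencil consists entirely of $\cE_1 \cap \cE_2$ polynomials is controlled by combining the bound $|\cE_1 \cap \cE_2| \leq m_1/100$ with the low-rank consequences for $Q_1$ or $Q_2$ derived in the preceding step, ultimately ensuring that $V$ remains of dimension $O(1)$.
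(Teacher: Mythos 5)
Your treatment of the majority set $\cM$ matches the paper's treatment of $\cF$ (via \autoref{cor:2-2} and \autoref{cla:3}) and is correct. The gap is in the exceptions. In the $\cE_1\setminus\cE_2$ argument for the Case~\ref{case:rk1} sub-case, substituting $Q = \gamma Q_1 + \delta R_Q$ and $Q_2 = cQ_1 + G(V)$ into $\ell_Q^2 = \alpha Q_2 + \beta Q$ does \emph{not} yield $\ell_Q^2 = s_Q Q_1 + H(V)$: the term $\beta\delta R_Q$ survives, and $R_Q$ is merely some polynomial of $\cQ$ on the pencil $\spn\{Q_1,Q\}$ --- possibly itself in $\cE_1\cap\cE_2$ and not yet expressed over $Q_1$ and $V$. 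Your ``pencil analysis'' for $\cE_1\cap\cE_2$ has the same circularity: it only propagates once some polynomial other than $Q_1$ on $\spn\{Q_1,Q\}\cap\cQ$ is already controlled, and when the whole pencil is uncontrolled you offer only a hand-wave about $|\cE_1\cap\cE_2|\le m_1/100$ together with unspecified ``low-rank consequences.'' That bound alone cannot control $\dim V$: each such pencil contributes only two elements of $\cE_1\cap\cE_2$, so the number of them can grow linearly in $m_1$. The Case~\ref{case:span} sub-case when expressing $Q_2$ has the same issue: the witness for $(Q_1,Q_2)$ need not lie in $\cM$.

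The paper breaks this circularity by a counting argument that your proposal is missing, namely \autoref{cla:Fc}: for each $Q$ in the uncontrolled set $\cF^c$, at least $0.96m_1$ polynomials of the \emph{controlled} set $\cF$ satisfy Case~\ref{case:rk1} or Case~\ref{case:2} directly with $Q$. The observation making this work is that if the SG witness for $(Q,F)$ with $F\in\cF$ falls under Case~\ref{case:span}, then the witness $Q'$ must itself be uncontrolled (otherwise $Q\in\spn\{F,Q'\}$ would be controlled), and distinct such $Q'$ determine distinct $F$, so at most $|\cF^c|\le 0.02m_1$ of the $F\in\cF$ give Case~\ref{case:span}. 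With this in hand, the rank argument $a_1^2-a_2^2=F_2-F_1$ (crucially exploiting the earlier step that absorbs a low-rank $Q_1$ into $V'$) forces each $Q\in\cF^c$ to satisfy Case~\ref{case:2} with some $F$ that is a quadratic over $V'$ alone, and then a projection argument in the style of \autoref{cla:3} bounds $\cF^c$. This is the missing idea; without it, you have no way to bootstrap the propagation from controlled to uncontrolled polynomials.
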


\begin{proof}
Notice that for  $Q_1$ there are $0.99m_1$ polynomials in $\cQ$ that even together with $Q_1$ do not span any other polynomial in $\cQ$. The same holds for $Q_2$. Consider a polynomial $Q_j$ so that $Q_1$ and $Q_j$ do not span any other polynomial in $\cQ$. We conclude that  $Q_1$ and $Q_j$ satisfy Case~\ref{case:rk1} or Case~\ref{case:2} of \autoref{thm:structure}. Indeed, if $Q_1$ and $Q_j$ satisfy  Case~\ref{case:span} of \autoref{thm:structure} then they span some polynomial in $\cL$ and in particular they span a square, but this means that they also satisfy Case~\ref{case:rk1}  of \autoref{thm:structure}.

From the discussion above it follows that there are at least $0.98m_1$ polynomials in $\cQ$ satisfying Case~\ref{case:rk1} or Case~\ref{case:2} of the theorem with $Q_1$ and $Q_2$. Let $\cF$ be the set of these polynomials. Partition $\cF$ to three sets $\cI,\cJ,\cK$ so that those polynomials in $\cI$ satisfy Case~\ref{case:2} of \autoref{thm:structure} with   $Q_1$, those in $\cJ$ satisfy  Case~\ref{case:2} of \autoref{thm:structure}   with $Q_2$ and those in $\cK$ satisfy  Case~\ref{case:rk1} of \autoref{thm:structure} with both $Q_1$ and $Q_2$. From \autoref{cor:2-2} and \autoref{cla:3} we conclude that there is a an $O(1)$-dimensional space $V'$ of linear functions such that all those $0.98m_1$ polynomials  are in the linear span of quadratics over $V'$ and $Q_1$.

To simplify things further, if it is the case that $Q_1 = F(V') + aa'+bb' $, i.e. that $Q_1$ can be written as a quadratic over $V'$ plus two products of linear forms, then we add $a,a',b,b'$ to $V'$ and we do not consider $Q_1$ any more.\footnote{This step is not crucial at this point, it just makes some later argument a bit simpler.}

We now consider the remaining $0.02m_1$ polynomials in $\cQ$. In fact, consider those polynomials that cannot be spanned by quadratics over $V'$ and $Q_1$ and call this set $\cF^c$ (abusing notation). 

\begin{claim}\label{cla:Fc}
For each $Q\in \cF^c$ there are at least $0.96m_1$ polynomials in $\cF$ that satisfy either Case~\ref{case:rk1} or Case~\ref{case:2} of \autoref{thm:structure} with $Q$.
\end{claim}

\begin{proof}
If $Q$ and $F\in \cF$ span a polynomial in $\cL$ then we say that $Q$ satisfies Case~\ref{case:rk1} with $F$.  Thus, if $Q$ and $F\in \cF$ satisfy \autoref{case:span}  of \autoref{thm:structure} then the third polynomial is not in $\cF$ (as by switching sides we will get that $Q$ is also in $\cF$). Hence, this polynomial must be in $\cF^c$. Assume that $Q'$ is this polynomial. Notice that there is no other $F'\in\cF$ that together with $Q$ spans $Q'$ as in such a case $Q$ would be in $\cF$. Indeed, let $\alpha_1Q+F=Q'$ and $\alpha_2Q+F'=Q'$. Since $F$ and $F'$ are linearly independent we get that $0\neq (\alpha_1-\alpha_2)Q=F'-F$ in contradiction to the assumption that $Q$ is in $\cF^c$. Thus, $Q$ can satisfy \autoref{case:span}  of \autoref{thm:structure} with at most $|\cF^c|\leq 0.02m_1$ polynomials. 
It follows that there are at least $0.96m_1$ polynomials in $\cF$ that satisfy either Case~\ref{case:rk1} or Case~\ref{case:2}   of \autoref{thm:structure} with $Q$.
\end{proof}

We next show that all polynomials $Q\in \cF^c$ satisfy Case~\ref{case:2}  of \autoref{thm:structure} with some polynomial in $\cF$. Indeed, if this is not the case then there must be a polynomial $Q$  that satisfy Case~\ref{case:rk1}  of \autoref{thm:structure} with all polynomial in $\cF$. 
Let $F_1,F_2\in \cF$. Then, after rescaling, there are $a_1,a_2$ so that $Q+a_1^2 = F_1$ and $Q+a_2^2 = F_2$. Hence, $a_1^2-a_2^2 = F_2-F_1$. As $F_2-F_1$ is a linear combination of $Q_1$ and quadratics over $V'$, it must be the case that $F_2-F_1$ are defined over $V'$ alone as otherwise we would have replaces $Q_1$ with two linear functions as described above. Thus, $a_1^2-a_2^2 =F(V')$ and it follows that $a_1,a_2\in V'$ and hence $Q\in \cF$ in contradiction.

We now bound the dimension of $\cF^c$.
By an argument similar to the proof of \autoref{cla:3} it follows that there is an $O(1)$-dimensional space of linear functions, $V''$ such that all polynomials in $\cF^c$ are quadratics over $V''$: We send $V'$ to a random multiple of a new variable $z$. This makes all polynomials in $\cF^c$ to be of the form $zb_i$ and as before the linear functions $\{b_i\}_i\cup \{z\}$ satisfy the usual Sylvester-Gallai condition and we conclude using \autoref{cor:bdwy} (as in the proof of \autoref{cla:3} we repeat this twice for two independent mappings etc.). Set $V$ be the span of $V'' \cup V'$. This completes the proof of \autoref{cla:2-bad}
\end{proof}

It remains to bound the dimension of $\cL$. This however, follows immediately from \autoref{cla:linear-remainder}.

This concludes the proof of the case of two bad polynomials and with it the proof of \autoref{thm:main-sg-intro}.
\end{enumerate}
\end{proof}

\section{Edelstein-Kelly theorem for quadratic polynomials}\label{sec:quad-EK}

In this section we prove \autoref{thm:main-ek-intro}. We  repeat its statement for convenience.




\begin{theorem*}[\autoref{thm:main-ek-intro}]
Let $\cT_1,\cT_2$ and $\cT_3$ be finite sets of homogeneous quadratic polynomials over $\C$ satisfying the following properties:
\begin{itemize}
\item Each $Q\in\cup_i\cT_i$ is either irreducible or a square of a linear function.
\item No two polynomials are multiples of each other (i.e., every pair is linearly independent).
\item For every two polynomials $Q_1$ and $Q_2$ from distinct sets there is a polynomial $Q_3$ in the third set such that $Q_3\in\sqrt{(Q_1,Q_2)}$. 
\end{itemize}
Then the linear span of the polynomials in $\cup_i\cT_i$'s has dimension $O(1)$.
\end{theorem*}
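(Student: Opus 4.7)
The plan is to mirror the strategy of Theorem~\ref{thm:main-sg-intro}, with Theorem~\ref{thm:EK-robust-span} playing the role of Corollary~\ref{cor:bdwy} in the finishing step. For every cross-pair $(Q_1,Q_2)\in\cT_i\times\cT_j$ with $i\neq j$, Theorem~\ref{thm:structure} sorts the hypothesised witness $Q_3\in\cT_k$ into one of three kinds: \textup{(a)} $Q_3\in\spn\{Q_1,Q_2\}$; \textup{(b)} $\spn\{Q_1,Q_2\}$ contains a square of a linear form; \textup{(c)} $Q_1$, $Q_2$ and $Q_3$ all vanish on a codimension-two linear variety, forcing $\rank_s(Q_1),\rank_s(Q_2)\leq 2$. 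Call $Q\in\cT_i$ \emph{good} if for every $j\neq i$ at least a $\delta$-fraction of $Q'\in\cT_j$ produce Case-(a) witnesses, with $\delta$ a small absolute constant. If every polynomial in $\cup_i\cT_i$ is good, then, viewing the quadratics as points of the $\binom{n+1}{2}$-dimensional space of homogeneous quadratics and using Remark~\ref{rem:span-pass}, the three sets form a $\delta$-EK configuration, and Theorem~\ref{thm:EK-robust} gives $\dim\spn\{\cup_i\cT_i\}=O(1/\delta^3)=O(1)$.

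The main work is the case where some $Q\in\cT_i$ is bad: say $Q\in\cT_1$ and at least a $(1-\delta)$-fraction of $Q'\in\cT_j$ (for some $j\in\{2,3\}$) produce witnesses of type (b) or (c). I would handle the two sub-cases in the spirit of Claims~\ref{cla:2-2} and~\ref{cla:3}. For those $Q'$ in Case~(b), $Q'=\alpha_{Q'}Q+b_{Q'}^2$; comparing these different expressions by unique factorisation---as in Claim~\ref{cla:2-2} and Corollary~\ref{cor:2-2}---forces the $b_{Q'}$ into an $O(1)$-dimensional space of linear forms $V_1$. For those $Q'$ in Case~(c), one first concludes $\rank_s(Q)\leq 2$, so by Claim~\ref{cla:irr-quad-span} the minimal representation of $Q$ spans a fixed $O(1)$-dimensional space of linear forms $V_2$; mapping $V_2$ to a random multiple of a fresh variable $z$ turns each such $Q'$ into $z\cdot\ell_{Q'}$, and the hypothesis reduces (exactly as in Claim~\ref{cla:3}) to a Sylvester--Gallai-type condition on the $\ell_{Q'}$ whose span is bounded by Corollary~\ref{cor:bdwy}. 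Together these two arguments place all but a $\delta$-fraction of $\cT_j$ inside a fixed subspace $W$ of the quadratic space with $\dim W=O(1)$, built from $Q$ together with quadratics over $V_1\cup V_2$.

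Now set $W_i=\cT_i\cap W$ and $\cK_i=\cT_i\setminus W$. With the contribution of $Q$ absorbed into $W$, the elements of $\cup_i\cK_i$ are good except for a constant number of exceptions coming from the extraction, so Theorem~\ref{thm:EK-robust-span} applies to $(\cK_1,\cK_2,\cK_3;W,W_1,W_2,W_3)$ and yields a subspace of dimension $O(\dim W+1/\delta^3)=O(1)$ containing $\spn\{\cup_i\cT_i\}$. If more than one bad polynomial is found in the process, I would iterate the extraction: each round either absorbs another bad polynomial together with $O(1)$ new linear directions into $W$, or declares the process complete. Since each absorption covers a $(1-O(\delta))$-fraction of some $\cT_j$, the iteration must terminate after $O(1)$ rounds without inflating $\dim W$ beyond $O(1)$.

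The main obstacle is that Cases~(b) and~(c) of Theorem~\ref{thm:structure} allow the witness to be ``replaced'' by a square, or to live inside the auxiliary $W$ rather than in the originally prescribed $\cT_k$, which destroys the crisp three-set symmetry required by the classical Edelstein--Kelly theorem; this is precisely why Theorem~\ref{thm:EK-robust-span}---with its ambient subspace $W$ and tolerance for a constant number of exceptional elements---is strictly needed, rather than the simpler Theorem~\ref{thm:EK-robust}. A secondary challenge is merging the two bad sub-cases cleanly: they constrain different pieces of data (a rank-one pencil through $Q$ versus a low-rank joint vanishing with $Q$), and the argument must place both contributions into a single $W$ while preserving the pairwise linear independence that must survive the generic projections used in the Sylvester--Gallai reduction step.
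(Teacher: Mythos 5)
Your overall scaffolding (good/bad dichotomy, Theorem~\ref{thm:structure} to sort witnesses, Claims~\ref{cla:2-2}/\ref{cla:3} for the bad case, \autoref{thm:EK-robust-span} to finish) does mirror the paper, but there is a genuine gap in the heart of the argument that the paper's proof is structured precisely to avoid.

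\textbf{The single-reference Case-(b) extraction does not work.} You run the extraction off a \emph{single} bad polynomial $Q$, and for the $Q'\in\cT_j$ satisfying Case~(b) with $Q$ you write $Q'=\alpha_{Q'}Q+b_{Q'}^2$ and claim that ``comparing these different expressions by unique factorisation'' (in the style of \autoref{cla:2-2}) forces the $b_{Q'}$ into a fixed $O(1)$-dimensional space $V_1$. This is false with a single reference: take $Q=x_1^2+x_2^2+x_3^2$ and $Q'_j=Q+x_j^2$ for $j=4,\dots,N$. All of these satisfy Case~(b) with $Q$, but the $b_{Q'_j}=x_j$ span an $(N-3)$-dimensional space. \autoref{cla:2-2} produces a bound precisely because every $F_i$ admits two \emph{simultaneous} representations $F_i=Q_1+\ell_i^2=\beta_iQ_2+b_i^2$ through two distinct reference polynomials; it is the comparison $(b_i-\ell_i)(b_i+\ell_i)=Q_1-\beta_iQ_2$ across $i$ that constrains the linear forms. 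The paper therefore conditions its ``bad'' case on finding \emph{two} bad polynomials $Q_1,Q_2$ aimed at the same $\cQ_j$, and, inside the $(1-O(\delta))$-fraction of $\cQ_j$ that fails Case~(a), partitions into three subsets: Case~(c) with $Q_1$, Case~(c) with $Q_2$, and Case~(b) with \emph{both} $Q_1$ and $Q_2$. Only the last subset is amenable to \autoref{cla:2-2}. Your proposal never ensures two references, so the Case~(b) part of your $W$ cannot be built.

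\textbf{The iteration is not a substitute for the paper's finite case analysis.} The paper never iterates: it splits into ``at most one polynomial is bad for each $\cQ_j$'' (handled by \autoref{cor:EK-robust} with a constant number of exceptional elements) versus ``at least two polynomials are bad for some $\cQ_j$'', and the latter is further split according to whether a second $\cQ_i$ ($i\in[2]$) also has two bad polynomials. Your termination heuristic (``each absorption covers a $(1-O(\delta))$-fraction of some $\cT_j$, so the process stops after $O(1)$ rounds'') is not established, and, because of the gap above, the very first absorption already fails. Two further points that need attention: the squares $\cL_i\subset\cT_i$ have to be treated separately at the end (via the random projection to a fresh variable $z$, as in \autoref{cla:linear-remainder}); and ``exactly as in \autoref{cla:3}'' undersells the colored version \autoref{cla:3-i}, which requires the additional argument of \autoref{cla:both-sets} to guarantee that \emph{both} $\cI_2$ and $\cI_3$ contribute linear forms independent of $z$ after the projection.
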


\begin{remark}
As before, the requirement that the polynomials are homogeneous is without lost of generality as homogenization does not affect the property $Q_k\in\sqrt{(Q_i,Q_j)}$. 
\end{remark}

The proof follows a similar outline to the proof of \autoref{thm:main-sg-intro}.

\begin{proof}[Proof of \autoref{thm:main-ek-intro}]

Partition the polynomials in each $\cT_i$ to two sets. Let $\cL_i$ be the set of all squares and $\cQ_i$ be the rest. Denote $|\cQ_i|=m_i$.

\sloppy
Call a polynomial $Q\in \cQ_1$ bad for $\cQ_2$ if there are less than $m_2/100$ polynomials $Q_2\in \cQ_2$ so that $\spn\{Q,Q_2\}$ contains a polynomial from $\cQ_3$, i.e., $Q$ and $Q_2$ satisfy Case~\ref{case:span} of \autoref{thm:structure} (but not Case~\ref{case:rk1}). We say that $Q\in \cQ_1$ is bad for $\cQ_3$ if the equivalent condition is satisfied.
We say $Q\in \cQ_1$ is bad if it is bad for both $\cQ_2$ and $\cQ_3$.
We call the polynomials in $\cQ_2,\cQ_3$ bad and good in a similar way.


We handle two cases according to whether there is at most one bad polynomial for each $\cQ_i$ or not.

\begin{enumerate}
\item {\bf There is at most one bad polynomial for each $\cQ_j$:} \hfill

In this case, in a similar fashion to the first case of \autoref{thm:main-sg-intro}, we get from \autoref{cor:EK-robust} that the linear span of the polynomials in $\cQ\eqdef \cQ_1\cup\cQ_2\cup\cQ_3$ has dimension $O(1)$. 

As in the proof of \autoref{thm:main-sg-intro} we next extend the bound to also include the linear functions in $\cup_i \cT_i$.
Assume $Q_1,\ldots,Q_k$ for some $k=O(1)$ span $\cQ$. We now repeat the following process. We start with $\cI=\{Q_1,\ldots,Q_k\}$ and $V=\emptyset$. If there is some nontrivial linear combination of the polynomials in $I$ that is equal to a quadratic of the form $F(V)+ a_1b_1+ a_2b_2$, where $a_i,b_i$ are linear functions then we add $a_1,a_2,b_1,b_2$ to $V$ and remove one of the polynomials that participated in the linear combination from $\cI$. We continue doing so 
until no such linear combination exists or until $\cI$ is empty. 
At the end $|V|\leq 4k = O(1)$. As before we abuse notation and think of $V$ as the linear space spanned by the linear functions in it.

It remains to bound the dimension of $\cL\eqdef \cL_1\cup \cL_2\cup \cL_3$. We do so in a similar fashion to the proof of \autoref{cla:linear-remainder}. Denote $\cL' = \cL\setminus V$.

First, we apply a random projection to the linear functions in $V$ so that they are all equal to some multiple of $z$. We next show that the set $\cL' \cup \{z\}$ satisfies the Sylvester-Gallai condition and hence its dimension is $O(1)$ as needed (we abuse notation and denote with $\cL'$ the projection of $\cL'$, which, as before, still consists of pairwise independent linear functions).

Let $x,y\in \cL'$ come from two different $\cL_i$. Let $Q$ be such that $Q\in \sqrt{(x,y)}$. If $Q\in \cQ$ then $Q= Q' + G(z)$, where $Q'$ is a linear combination of the polynomials in $\cI$. Note however, that by definition of $V$, $Q'$ must be zero as otherwise we would have a linear combination of small rank and then the set $\cI$ would be different. Hence, $Q=G(z)$. It follows that $z\in \spn\{x,y\}$ and so $x,y,z$ are linearly dependent as required. If, on the other hand, $Q\in \cL$ then $Q=\ell^2$ and it follows that $\ell \in \spn\{x,y\}$. In either case, there is a third linear function in $\cL' \cup \{z\}$ that is spanned by $x,y$ as claimed.

Note that if $\cL\subseteq \cL_i$ for some $i$ then we easily conclude this case by picking any $x\in \cL$ and any $Q$ in a different set and as above conclude that $x\in\spn\{z\}$.

\item {\bf There are at least two bad polynomial for some $\cQ_j$:} \hfill \break
To ease notation assume w.l.o.g. that there are at least two bad polynomials for $\cQ_3$.
The next claim gives something similar to the first part in the proof of \autoref{cla:2-bad}.

\begin{claim}\label{cla:2-bad-i}
Assume $Q_1,Q_2\in\cQ_1\cup\cQ_2$ are bad for $\cQ_3$, then there is a space $V$ of linear functions of dimension $O(1)$ so that at least $0.98m_3$ of the  polynomials in $\cQ_3$ are in the linear span of $Q_1$ and quadratic polynomials over $V$.
\end{claim}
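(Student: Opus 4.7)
The proof mirrors \autoref{cla:2-bad} from the SG setting, adapted to the colored EK framework. First, by the badness hypothesis, fewer than $m_3/100$ polynomials $F\in\cQ_3$ satisfy Case~\ref{case:span} of \autoref{thm:structure} with $Q_1$, and similarly for $Q_2$; a union bound leaves at least $0.98 m_3$ polynomials $F\in\cQ_3$ satisfying (Case~\ref{case:rk1} or Case~\ref{case:2}) with each of $Q_1, Q_2$. Call this set $\cF$, and partition $\cF = \cK \sqcup \cJ \sqcup \cI$, where $\cK$ consists of $F$ satisfying Case~\ref{case:rk1} with both; $\cJ$ of those satisfying Case~\ref{case:rk1} with $Q_1$ and Case~\ref{case:2} with $Q_2$; and $\cI$ of those satisfying Case~\ref{case:2} with $Q_1$.

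For $\cK$: \autoref{cor:2-2} immediately produces a $4$-dimensional space $V_\cK$ of linear forms with $\cK\subseteq\spn\{Q_1\}+\mathrm{Sym}^2(V_\cK)$. For $\cJ$: every $F\in\cJ$ can be written $F=\alpha_F Q_1+\ell_F^2$ with $\alpha_F\neq 0$ (else $F$ would be a square, contradicting $F\in\cQ_3$). Case~\ref{case:2} with $Q_2$ places $F\in(V_{Q_2})$, where $V_{Q_2}$ is the canonical $4$-dimensional span of a minimal representation of $Q_2$ (well-defined since $\rank_s(Q_2)=2$, by \autoref{cla:irr-quad-span}). Reducing the identity $\alpha_F Q_1+\ell_F^2\equiv 0\pmod{(V_{Q_2})}$ forces $Q_1$ modulo $V_{Q_2}$ to be, up to scalar, the square of a single fixed linear form $\bar\ell^*$, and hence $\ell_F\in V_{Q_2}+\spn\{\ell^*\}$ for a fixed lift $\ell^*$. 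Therefore $\cJ\subseteq\spn\{Q_1\}+\mathrm{Sym}^2(V_{Q_2}+\spn\{\ell^*\})$.

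For $\cI$: every $F\in\cI$ lies in the ideal $(V_{Q_1})$, where $V_{Q_1}$ is the canonical $4$-dimensional minimal-representation space of $Q_1$. Following the strategy of \autoref{cla:3}, project $V_{Q_1}$ onto a generic line $\spn\{z\}$; each $F\in\cI$ becomes $z\cdot\ell_F$ and $Q_1$ becomes $\gamma z^2$. The goal is to show $\dim\spn\{\ell_F\}=O(1)$. The argument of \autoref{cla:3} did this via the SG condition internal to $\cQ$, which is unavailable here. Instead, split $\cI$ by behavior against $Q_2$: if $F\in\cI$ also satisfies Case~\ref{case:rk1} with $Q_2$, the mod-$(V_{Q_1})$ analysis used for $\cJ$ (with the roles of $Q_1, Q_2$ reversed) gives $F\in\spn\{Q_2\}+\mathrm{Sym}^2(V_{Q_1}+\spn\{\ell^{**}\})$ for a fixed form $\ell^{**}$; if $F\in\cI$ satisfies Case~\ref{case:2} with $Q_2$, then $F\in(V_{Q_1})\cap(V_{Q_2})$, and we appeal to \autoref{thm:EK-robust-span} applied to the projected sets (with exceptional subspace $W=\spn\{z\}+V_{Q_2}$ absorbing the $V_{Q_2}$-part and the $z^2$-image of $Q_1$), using the EK relation between $\cQ_3$ and $\cQ_1,\cQ_2$, to conclude that the projected residual forms span an $O(1)$-dimensional subspace. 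Two independent projections combined as in \autoref{cla:3} recover an $O(1)$-dimensional $V_\cI$ with $\cI\subseteq\mathrm{Sym}^2(V_{Q_1}+V_\cI)$. Setting $V=V_\cK+V_\cI+V_{Q_1}+V_{Q_2}+\spn\{\ell^*,\ell^{**}\}$ places every $F\in\cF$ in $\spn\{Q_1\}+\mathrm{Sym}^2(V)$, as desired.

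The main technical obstacle is the subcase of $\cI$ where $F$ satisfies Case~\ref{case:2} with both $Q_1$ and $Q_2$: here the SG-based argument of \autoref{cla:3} does not apply and one must set up the projected configuration to fit the hypotheses of \autoref{thm:EK-robust-span}. The flexibility of allowing an exceptional subspace $W$ in that theorem, together with the fact that one only needs to bound $0.98 m_3$ of the $\cQ_3$-polynomials (rather than all of them, which in the SG case required the separate treatment of $\cF^c$), is what makes the argument go through.
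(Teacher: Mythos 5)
Your handling of $\cK$ coincides with the paper's (via \autoref{cor:2-2}), and your treatment of $\cJ$ is a genuinely different and cleaner route: reducing the identity $\alpha_F Q_1 + \ell_F^2 \equiv 0 \pmod{(V_{Q_2})}$ to pin down $\bar\ell_F$ up to scalar is a nice shortcut that the paper does not use (the paper funnels all of its $\cJ_3$ through \autoref{cla:3-i} instead). That part of your argument is correct, modulo noting the degenerate possibility $\bar Q_1=0$, which you effectively cover.

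The real gap is in $\cI$, and it is not a cosmetic one. For the subcase $F\in\cI$ satisfying Case~\ref{case:rk1} with $Q_2$, your mod-$(V_{Q_1})$ analysis legitimately yields $F = \gamma_F Q_2 + \delta_F \ell_F^2$ with $\ell_F \in V_{Q_1}+\spn\{\ell^{**}\}$, i.e. $F\in\spn\{Q_2\}+\mathrm{Sym}^2(V_{Q_1}+\spn\{\ell^{**}\})$. But the claim requires $F\in\spn\{Q_1\}+\mathrm{Sym}^2(V)$, and you never explain how to eliminate $Q_2$. Adding ``$V_{Q_2}$'' to $V$ at the end does not work in general: $V_{Q_2}$ is only defined when $\rank_s(Q_2)=2$, which you can guarantee only when some $F$ satisfies Case~\ref{case:2} with $Q_2$ (i.e.\ when $\cJ$ or the other subcase of $\cI$ is nonempty). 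In the scenario $\cK=\cJ=\emptyset$ and $\cI=\cI\cap(\text{rk1 with }Q_2)$, which you cannot rule out, your argument leaves $Q_2$ dangling. Meanwhile your final summary asserts $\cI\subseteq\mathrm{Sym}^2(V_{Q_1}+V_\cI)$, which is inconsistent with the expression you actually derived for that subcase. For the other subcase (Case~\ref{case:2} with both $Q_1$ and $Q_2$), the appeal to \autoref{thm:EK-robust-span} is only gestured at: that theorem is about vectors and requires a carefully established EK-type relation on the projected residual forms $\ell_F$. Translating the polynomial-level radical condition into a linear-form EK condition after the $V_{Q_1}\to\spn\{z\}$ projection is precisely the content of the paper's \autoref{cla:3-i} (in particular \autoref{cla:both-sets}), which requires the observation that a $Q\in\cQ_1$ satisfying Case~\ref{case:2} with some $F\in\cQ_3$ also satisfies it with some polynomial in $\cQ_2$, and a nontrivial argument that the ``third'' polynomial produced by the EK hypothesis again lies in $(V_{Q_1})$ and hence projects to a multiple of $z$. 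Your proposal is missing this core mechanism; without it, the bound on $\cI$ is not established. The paper circumvents both of your $\cI$-subcases by applying \autoref{cla:3-i} uniformly to all of $\cI_3$ (and $\cJ_3$), which is exactly the tool you would need to complete the argument.
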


\begin{proof}
Notice that for  $Q_1$ there are $0.99 m_3$ polynomials in $Q'\in \cQ_3$ that even together with $Q_1$ do not span any other polynomial in $\cQ_2$. 
The same holds for $Q_2$. Consider a polynomial $Q'\in\cQ_3$ so that $Q_1$ and $Q'$ do not span any other polynomial in $\cQ_2$. We conclude that  $Q_1,Q'$ satisfy Case~\ref{case:rk1} or Case~\ref{case:2} of \autoref{thm:structure}. Indeed, if $Q_1$ and $Q'$ satisfy  \autoref{case:span} of \autoref{thm:structure} then they span some polynomial in $\cL_2$ and in particular they span a square of a linear function, but this means that they also satisfy Case~\ref{case:rk1}  of \autoref{thm:structure}. \\


From the discussion above it follows that there are at least $0.98 m_3$  polynomials in $\cQ_3$ satisfying Case~\ref{case:rk1} or Case~\ref{case:2} of the theorem with $Q_1$ and $Q_2$. Let $\cF_3$ be the set of these polynomials in $\cQ_3$. We partition $\cF_3$ to three sets $\cI_3,\cJ_3,\cK_3$ so that those polynomials in $\cI_3$ satisfy Case~\ref{case:2} of \autoref{thm:structure}  with $Q_1$, those in $\cJ_3$ satisfy  Case~\ref{case:2} of \autoref{thm:structure}   with $Q_2$ and those in $\cK_3$ satisfy  Case~\ref{case:rk1} of \autoref{thm:structure} with both $Q_1$ and $Q_2$. As before we would like to apply \autoref{cor:2-2} and \autoref{cla:3} to conclude that  there is a an $O(1)$-dimensional space $V'$ of linear functions such that all those $0.98 m_3 $ polynomials  of $\cF_3$ are in the linear span of quadratics over $V'$ and $Q_1$. The only problem is that the proof of \autoref{cla:3} should be tailored to the colored case, which is what we do next (indeed, \autoref{cla:2-2} can be applied without any changes and therefore also \autoref{cor:2-2}). 

Note that if $Q\in\cQ_1$ satisfies Case~\ref{case:2}  of \autoref{thm:structure} with some polynomial in $\cQ_3$ then it also satisfies the same case with a polynomial in $\cQ_2$.

\begin{claim}\label{cla:3-i}
Let $\cI_2\subseteq \cQ_2$ and $\cI_3\subseteq \cQ_3$ be irreducible quadratics  that satisfy Case~\ref{case:2} of \autoref{thm:structure} with an irreducible $Q\in\cQ_1$. Then, there exists an $O(1)$-dimensional space $V$ such that all polynomials in $\cI_2 \cup \cI_3$ are quadratic polynomials in the linear functions in $V$.
\end{claim}

We postpone the proof of the claim to \autoref{sec:missing} and continue with the proof of  \autoref{cla:2-bad-i}.
By applying \autoref{cla:3-i} first to $\cI_3$ and then to $\cJ_3$ we conclude that $\cI_3\cup \cJ_3$ are quadratics over a set of $O(1)$ linear functions $V$. \autoref{cor:2-2} implies that every quadratic in $\cK_3$ is in the linear span of $Q_1$ and quadratics over an $O(1)$-sized set $V'$. combining $V$ and $V'$ the claim follows. This completes the proof of  \autoref{cla:2-bad-i}.
\end{proof}

Let $V$ be the $O(1)$-dimensional space and $\cF_3\subseteq \cQ_3$ the set of polynomials guaranteed by \autoref{cla:2-bad-i}.

To continue we again have to consider two cases. The first is when there are two polynomials that are bad for $\cQ_1$ or for $\cQ_2$ (so far we assumed there are at least two bad polynomials for $\cQ_3$). The second case is when at most one polynomial is bad for $\cQ_1$ and at most one polynomial is bad for $\cQ_2$.

\begin{enumerate}
\item {\bf There are two bad polynomials for some $\cQ_i$, $i\in[2]$:} \hfill

Assume w.l.o.g. that $i=2$. 
As before \autoref{cla:2-bad-i} implies that there is a polynomial $Q_2$ and an $O(1)$-dimensional space $U$ such that $0.98m_2$ of the polynomials in $\cQ_2$ are in the linear span of $Q_2$ and quadratics over $U$. Call those polynomials $\cF_2$. 
Let $W = U+V$ be an $O(1)$-dimensional space containing both $U$ and $V$.

We now check whether there is any nontrivial linear combination of $Q_1$ and $Q_2$ that is of the form $a\cdot b + a'\cdot b' + F(W)$. If such a combination exists then we add $a,a',b,b'$ to $W$ (and abusing notation call the new sets $W$ as well) and replace one polynomial that appeared in this combination with the other. I.e. if $Q_2$ appeared in such a combination then we think of the space that is spanned by $Q_1$ and $W$ rather than by $Q_2$ and $W$. We continue to do so once again if necessary.

Assume further, w.l.o.g., that $|\cQ_2| \geq |\cQ_3|$.  Partition the set $\cQ_1$ to three sets $\cI,\cJ,\cK$ so that:
\begin{flalign}
\text{Each } & \text{$Q\in \cI$ satisfies Case~\ref{case:2} of \autoref{thm:structure} with at least one polynomial in } \cF_2. \label{partition}\\
\text{Each } & \text{$Q\in \cJ$ satisfies Case~\ref{case:rk1} of \autoref{thm:structure} with at least two polynomials in } \cF_2.\nonumber\\
\text{Each } & \text{$Q\in \cK$ satisfies Case~\ref{case:span} of \autoref{thm:structure} with all except possibly  one polynomial in $\cF_2$.}\nonumber
\end{flalign}

\begin{claim}\label{cla:Q1}
With the notation above we prove the following claims.
\begin{enumerate}[label={(\roman*)},itemindent=1em]
\item The linear span of all polynomials in $\cI$ has dimension $O(1)$.\label{cla:I}
\item All polynomials in $\cJ$ are polynomials over $W$.\label{cla:J}
\item All polynomials in $\cK$ are in the linear span of $Q_1,Q_2$ and quadratics over $W$.\label{cla:K}
\end{enumerate}
\end{claim}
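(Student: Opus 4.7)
The plan rests on two tools: the structure theorem (\autoref{thm:structure}) applied to each triple $(Q_3^*, Q, F)$ provided by the colored EK hypothesis, and the preprocessing invariant that no nontrivial $\C$-linear combination of $Q_1, Q_2$ has the shape $ab + a'b' + F(W)$. In particular, since $0\cdot Q_1 + 1\cdot Q_2$ and $1\cdot Q_1 + 0\cdot Q_2$ are themselves nontrivial combinations, neither $Q_1$ nor $Q_2$ alone can be written in this shape.

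For \ref{cla:K}, for each pair $(Q,F)$ with $Q \in \cK$ and $F$ in the ``all except possibly one'' subset of $\cF_2$, Case~\ref{case:span} furnishes $Q_3(F) \in \cQ_3 \cap \spn\{Q, F\}$. The map $F \mapsto Q_3(F)$ is injective: if $Q_3(F) = Q_3(F')$ for distinct $F, F'$ then $Q_3(F) \in \spn\{Q,F\} \cap \spn\{Q,F'\} = \spn\{Q\}$, violating pairwise linear independence. Hence at most $|\cQ_3 \setminus \cF_3| \leq 0.02 m_3$ values of $F$ can have $Q_3(F) \notin \cF_3$, and since $|\cF_2| \geq 0.98 m_2 \geq 0.98 m_3$ (recall $m_2 \geq m_3$), one can pick distinct $F, F' \in \cF_2$ whose images lie in $\cF_3$. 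Then $Q \in \spn\{F, F', Q_3(F), Q_3(F')\}$; as $F, F' \in \spn\{Q_2\}+W\text{-quadratics}$ and $Q_3(F), Q_3(F') \in \spn\{Q_1\}+W\text{-quadratics}$, this gives $Q \in \spn\{Q_1,Q_2\}+W\text{-quadratics}$, as desired.

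For \ref{cla:J}, I would fix $Q \in \cJ$ with two distinct witnesses $F_1, F_2 \in \cF_2$ and nonzero scalars $\alpha_i, \beta_i$ satisfying $\alpha_i Q + \beta_i F_i = b_i^2$. Writing $F_i = \gamma_i Q_2 + G_i(U)$ and eliminating $Q$ yields
\[
\alpha_2 b_1^2 - \alpha_1 b_2^2 \;=\; (\alpha_2\beta_1\gamma_1 - \alpha_1\beta_2\gamma_2)\,Q_2 \;+\; H(W)
\]
for some $W$-quadratic $H$. The left-hand side has $\rank_s\le2$, so if the $Q_2$-coefficient on the right were nonzero one could solve for $Q_2 = ab + a'b' + F(W)$, contradicting the preprocessing. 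Hence $\alpha_2 b_1^2 - \alpha_1 b_2^2 \in W\text{-quadratics}$; comparing this identity in an extended basis, and using that $F_1, F_2$ are irreducible and pairwise linearly independent (which rules out the degenerate case $b_1 \propto b_2$ with $F_1 \propto F_2$), forces $b_1, b_2 \in W$. Substituting back gives $\alpha_1 Q + \beta_1\gamma_1 Q_2 \in W\text{-quadratics}$; iterating the preprocessing (which describes $\cF_2$ after replacement as $\spn\{Q_1\}+W\text{-quadratics}$ whenever a $Q_2$-combination was absorbed into $W$) eliminates the $Q_2$-term and leaves $Q$ as a $W$-quadratic.

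For \ref{cla:I}, each $Q \in \cI$ comes with a partner $F \in \cF_2$ and a witness $Q_3 \in \cQ_3$ such that $Q, F, Q_3 \in \sqrt{(b_1, b_2)}$, so in particular $\rank_s(F) \leq 2$. Writing $F = \alpha Q_2 + G(U)$ as $c_1 b_1 + c_2 b_2$ shows that if $\alpha \neq 0$ then $Q_2$ would take the forbidden shape $ab + a'b' + F(W)$; hence $\alpha = 0$ and $F$ is a $W$-quadratic, and \autoref{cla:irr-quad-span} applied to $F$ forces $\spn\{b_1,b_2\} \subseteq W$. A counting argument paralleling \ref{cla:K} (the assignment $Q \mapsto Q_3(Q)$ has only a controlled number of exceptions mapping outside $\cF_3$), combined with the preprocessing applied to $Q_1$, shows that $Q_3(Q)$ is also a $W$-quadratic for all but $O(1)$ exceptions. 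I would then stratify $\cI$ by its partner $F$: since the partners lie in $\cF_2 \cap W\text{-quadratics}$, an $O(1)$-dimensional space, only $O(1)$ inequivalent partners appear. Within each stratum, the $Q$'s together with their $W$-quadratic witnesses in $\cQ_3$ form a colored configuration to which a symmetrised form of \autoref{cla:3-i} applies, yielding an $O(1)$-dimensional bound per stratum; summing gives the bound on $\spn(\cI)$. The main obstacle is precisely \ref{cla:I}: the partner $F(Q)$ varies with $Q$, so \autoref{cla:3-i} cannot be invoked directly, and the preprocessing-enforced reduction of partners to the $O(1)$-dimensional space of $W$-quadratics is what makes the stratification succeed.
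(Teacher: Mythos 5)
Your treatments of parts~\ref{cla:J} and~\ref{cla:K} essentially reproduce the paper's argument. For~\ref{cla:K} the paper also reduces to the case where two distinct $F,F'\in\cF_2$ yield the same witness $Q_3\in\cQ_3\setminus\cF_3$ by pigeonhole; note that your injectivity claim ``$\spn\{Q,F\}\cap\spn\{Q,F'\}=\spn\{Q\}$'' is not quite right (the two planes could coincide), but in that degenerate case $Q\in\spn\{F,F'\}$ directly, so the conclusion survives. For~\ref{cla:J} your algebraic elimination of $Q$ and the use of the preprocessing invariant to kill the $Q_2$-coefficient of $b_1^2-b_2^2$ match the paper; the final step --- whether one lands exactly in $W$-quadratics or in $\spn\{Q_1,Q_2\}$ plus $W$-quadratics --- is immaterial for the dimension bound, and both your write-up and the paper are a bit loose here.

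The gap is in part~\ref{cla:I}. Your opening observation is correct and is exactly the missing glue that makes the paper's one-line ``follows exactly as in \autoref{cla:3-i}'' work: because $F=\alpha Q_2+G(W)$ has $\rank_s(F)=2$, a nonzero $\alpha$ would exhibit $Q_2$ as $ab+a'b'+F'(W)$, violating the preprocessing, so $F$ is a $W$-quadratic and \autoref{cla:irr-quad-span} forces the two linear forms $\ell_1,\ell_2$ witnessing Case~\ref{case:2} into $W$. But the stratification step that follows does not work: from the fact that all partners $F$ lie in the $O(1)$-dimensional space of $W$-quadratics you conclude that ``only $O(1)$ inequivalent partners appear,'' which is false --- an $O(1)$-dimensional space of quadratics can contain arbitrarily many pairwise independent polynomials, so there may be $\Omega(m_2)$ distinct strata and summing their $O(1)$ contributions gives nothing. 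The intended argument (and what ``follows exactly as in \autoref{cla:3-i}'' is shorthand for) needs no stratification at all: having established $\ell_1,\ell_2\in W$, one projects every basis element of $W$ to a random multiple of a fresh variable $z$, so that each $Q\in\cI$ is sent to $z\cdot b_Q$; one then shows, exactly as in the proof of \autoref{cla:3-i}, that the resulting linear forms $\{b_Q\}$ together with the linear forms coming from the analogously-projected polynomials in $\cQ_2,\cQ_3$ form a colored Sylvester--Gallai configuration (using \autoref{cla:still-indep} for linear independence after projection and \autoref{cla:both-sets} for the colored condition), invokes the robust bound, and repeats with a second independent projection to recover an $O(1)$-dimensional space containing all of $\cI$.
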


\begin{proof}
The proof of \autoref{cla:I} follows exactly as in \autoref{cla:3-i}.

To show \autoref{cla:J} we proceed as in the discussion following the proof of \autoref{cla:Fc}.
Consider a polynomial $Q\in \cJ$. Let $F_1,F_2\in \cQ_2$ satisfy  Case~\ref{case:rk1} of \autoref{thm:structure} with $Q$. Then, after rescaling, there are $a_1,a_2$ so that $Q+a_1^2 = F_1$ and $Q+a_2^2 = F_2$. Hence, $a_1^2-a_2^2 = F_2-F_1$. As $F_2-F_1$ is a linear combination of $Q$ and quadratics over $W$, it must be the case that $F_2-F_1$ are defined over $W$ alone as otherwise we would have replaced $Q_2$ with two linear functions as described above. Thus, $a_1^2-a_2^2 =F(W)$ and it follows that $a_1,a_2\in W$ and hence $Q$ is a polynomial over $W$.

Finally, to prove \autoref{cla:K} we note that for every $Q\in\cK$ there are at least $0.98m_2-1$ polynomials $Q_2\in \cF_2$  so that for each of them there is $Q_3\in\cQ_3\cap\spn\{Q,Q_2\}$. If there exists such a combination where $Q_3\in \cF_3$ then it follows that $Q$ is a linear combination of $Q_1,Q_2$ and quadratics over $W$ (as all polynomials in $\cF_2$ and $\cF_3$ are). If we always get $Q_3 \not\in\cF_3$ then as $|\cQ_3\setminus \cF_3|\leq 0.02 m_3 \leq 0.02m_2 < (1/2)\cdot |\cF_2|$ there exist $Q_2,Q'_2 \in \cF_2$ and $Q_3\in\cQ_3$ so that $Q_3\in \spn\{Q,Q_2\},\spn\{Q,Q'_2\}$. As every two polynomials in our set are linearly independent this implies that $Q\in \spn\{Q_2,Q'_2\}$, and in particular it is in the span of $Q_2$ and quadratics over $W$, as claimed. 
\end{proof}

A similar argument will now show that $\cQ_2$ and $\cQ_3$ are also contained in an $O(1)$-dimensional space. We thus showed that there is an $O(1)$-dimensional space containing all polynomials in $\cQ_1\cup\cQ_2\cup\cQ_3$. It remains to bound the dimension of the linear functions in $\cL_1\cup\cL_2\cup\cL_3$. This can be done at exactly the same way as before.  
This concludes the proof of \autoref{thm:main-ek-intro} in this case.\\

\item {\bf At most one polynomial is bad for $\cQ_1$ and at most one polynomial is bad for $\cQ_2$}\hfill

In this case we reduce to the extended robust Edelstein-Kelly theorem (\autoref{thm:EK-robust-span}).

For each $i\in[2]$ partition $\cQ_i$ to $\cI_i,\cJ_i$ and $\cK_i$ as in \autoref{partition} except that we now consider $\cF_3$ instead of $\cF_2$ when partitioning. It follows, exactly as in the proof of \autoref{cla:Q1}, that there is an $O(1)$-dimensional space $U$ that all polynomials in $\cI_1\cup\cJ_1\cup\cI_2\cup\cJ_2$ are in the linear span of $Q_1$ and quadratics over $U$.

Let $W$ be the space spanned by  $Q_1$ and quadratics over $U$. Clearly $\dim(W)=O(1)$.

For $i\in[2]$ let $\cK'_i\subset\cK_i$ be those polynomials in $\cK_i$ that are not in $W$. Similarly, define $K'_3\subset\cQ_3$. Let $W_i  = W \cap \cQ_i$, for $i\in [3]$.

We now observe that the sets $\cQ_1=\cK'_1\cup W_1, \cQ_2=\cK'_2\cup W_2, \cQ_3=\cK'_3\cup W_3$ satisfy the conditions in the statement of \autoref{thm:EK-robust-span} (where the $\cK_i$ in the statement of the theorem is our $\cK'_i$), with parameters $r=O(1)$, $c=2$ and $\delta = 1/100$, when we identify our quadratic polynomials with their vectors of coefficients.

Indeed, as we are in the case where there is at most one bad polynomial for $\cQ_1$ and at most one bad polynomial for $\cQ_2$ we see that there are at most $2$ ``exceptional'' vectors defined that way. Furthermore, from the definition of $\cK'_1,\cK'_2$ (\autoref{partition}) no point in them is ``exceptional'' when considering $\cQ_3$.

Thus, \autoref{thm:EK-robust-span} guarantees the existence of a space $Y$ of dimension $O_c(r+1/\delta^3)=O(1)$ that spans all vectors in the set $\cQ_1\cup\cQ_2\cup\cQ_3$. 
We are almost done - we still have to deal with the linear function in $\cL_1\cup\cL_2\cup\cL_3$. This however is done exactly as before.
\end{enumerate}
\end{enumerate}
This completes the proof of \autoref{thm:main-ek-intro} (modulo the proof of \autoref{cla:3-i} that we give next).
\end{proof}

\subsection{Missing proof}\label{sec:missing}

In this section we give the proof of \autoref{cla:3-i}. For convenience we repeat the statement of the claim.

\begin{claim*}[\autoref{cla:3-i}]
Let $\cI_2\subseteq \cQ_2$ and $\cI_3\subseteq \cQ_3$ be irreducible quadratics  that satisfy Case~\ref{case:2} of \autoref{thm:structure} with an irreducible $Q\in\cQ_1$. Then, there exists an $O(1)$-dimensional space $V$ such that all polynomials in $\cI_2 \cup \cI_3$ are quadratic polynomials in the linear functions in $V$.
\end{claim*}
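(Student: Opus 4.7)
The plan is to mimic the proof of \autoref{cla:3}, replacing its Sylvester-Gallai appeal (\autoref{cor:bdwy}) by the colored variant \autoref{thm:EK-robust-span}. Let $V_0$ be the span of the linear forms in a minimal representation of $Q$; since $\rank_s(Q)=2$ we have $\dim V_0 \le 4$, and by \autoref{cla:irr-quad-span} the space $V_0$ is well defined. For any $F\in\cI_2\cup\cI_3$, the Case~\ref{case:2} property with $Q$ is witnessed by linear forms $b_1,b_2$ on which both $F$ and $Q$ vanish; writing $Q\in(b_1,b_2)$ as a minimal representation of $Q$ and applying \autoref{cla:irr-quad-span} forces $b_1,b_2\in V_0$, so $F$ vanishes on $V(V_0)$ and may be written $F=v_1 u_{F,1}+v_2 u_{F,2}$ with $v_1,v_2\in V_0$. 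A random substitution $\pi$ sending each basis vector of $V_0$ to an independent random multiple of a fresh variable $z$ then yields $\pi(F)=z\cdot b_F$ for a linear form $b_F$.

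The key step is to bound $\dim\spn\{b_F : F\in\cI_2\cup\cI_3\}$. I enlarge the sets to $\hat{\cI}_i=\{P\in\cT_i : P|_{V_0=0}=0\}$, so that $\cI_2\subseteq\hat{\cI}_2$ and $\cI_3\subseteq\hat{\cI}_3$. The triple $\hat{\cI}_1,\hat{\cI}_2,\hat{\cI}_3$ still satisfies the EK hypothesis of \autoref{thm:main-ek-intro}: any witness $P_k\in\cT_k$ of a pair from two different $\hat{\cI}$'s inherits the vanishing on $V(V_0)$, since $V(P_i)\cap V(P_j)\supseteq V(V_0)$, so $P_k\in\hat{\cI}_k$. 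Let $\mathcal{B}_i=\{b_P : P\in\hat{\cI}_i\}$. The analysis of $\sqrt{(zb_{P_i},zb_{P_j})}$ carried out in \autoref{cla:3} shows that for every pair from different $\mathcal{B}_i$'s, either $z\in\spn\{b_{P_i},b_{P_j}\}$ or some $b_{P_k}\in\mathcal{B}_k$ lies in that span. Applying \autoref{thm:EK-robust-span} with $W=\spn\{z\}$ (so $r=1$), $W_i=\mathcal{B}_i\cap W$, and $\cK_i=\mathcal{B}_i\setminus W$ gives $\dim\spn(\cup_i\mathcal{B}_i)=O(1)$: for a generic choice of $\pi$ distinct elements of $\cup_i\mathcal{B}_i$ are pairwise linearly independent, and the relaxed EK condition can fail only for pairs involving the single $W$-point $z$, which is at most a $1/|\cT_j|$ fraction of all pairs. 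In the degenerate case $\cK_j=\emptyset$, every polynomial of $\hat{\cI}_j$ is already in $\C[V_0]$ and is handled separately.

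To conclude, I repeat the argument with an independent substitution $\pi'$, obtaining linear forms $b_F'$ and sets $\mathcal{B}_i'$ with $\dim\spn(\cup_i\mathcal{B}_i')=O(1)$. Set $V:=\spn(V_0\cup\mathcal{B}_2\cup\mathcal{B}_3\cup\mathcal{B}_2'\cup\mathcal{B}_3')$, so $\dim V=O(1)$. For each $F\in\cI_2\cup\cI_3$, the pair $(b_F,b_F')$ is obtained from $(u_{F,1},u_{F,2})$ by a $2\times 2$ matrix whose entries depend on $\pi,\pi'$; for generic $\pi,\pi'$ this matrix is invertible, so $u_{F,1},u_{F,2}\in\spn\{b_F,b_F'\}\subseteq V$, and hence $F=v_1u_{F,1}+v_2u_{F,2}$ is a quadratic polynomial in the linear forms of $V$. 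The main obstacle is the careful invocation of \autoref{thm:EK-robust-span}: one must isolate the role of $z$ through the subspace $W$, verify that the relaxed EK condition holds on all but a negligible fraction of pairs, and choose the random substitutions generically to rule out accidental linear dependencies among the projections.
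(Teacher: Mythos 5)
The high-level strategy (project $V_0$ to a random multiple of $z$, get $b_F$'s, bound their span by a colored Sylvester--Gallai bound, then repeat with an independent projection to recover the full linear forms) matches the paper, and your observation that the enlarged sets $\hat{\cI}_i=\{P\in\cT_i : P|_{V_0=0}=0\}$ inherit the Edelstein--Kelly property is a clean reformulation. However, there are two genuine gaps.

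First, the paper devotes a non-trivial argument (Claim 6.7, \autoref{cla:both-sets}) to ruling out the \emph{asymmetric} degenerate case: it must be shown that if some polynomial in $\cI_2$ projects to $z\cdot b$ with $b\notin\spn\{z\}$, then so does some polynomial in $\cI_3$, and vice versa. Your proof never establishes this. Your dismissal ``in the degenerate case $\cK_j=\emptyset$, every polynomial of $\hat{\cI}_j$ is already in $\C[V_0]$ and is handled separately'' only disposes of $\cI_j$; it does not say what to do with $\cI_k$ for $k\neq j$. And the asymmetric situation $\cK_2=\emptyset$, $\cK_3\neq\emptyset$ is not \emph{a priori} excluded by the EK-structure of the $\hat{\cI}_i$'s: for $P_1\in\cK_1$ and $P_2\in W_2$, the witness $P_3\in\hat{\cI}_3$ satisfies $\pi(P_3)\in\sqrt{(z b_1, z^2)}=(z)$, which imposes no constraint whatsoever on $b_{P_3}$. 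So the colored SG structure among the $b$'s simply dissolves in that case. The paper closes this hole via the structure theorem (its Claim 6.7 uses \autoref{thm:structure} to show $\pi(Q'')=z^2$ would force $Q''\in\sqrt{(Q_1)}$, a contradiction); your proof has no substitute for it.

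Second, even granting symmetry, your invocation of \autoref{thm:EK-robust-span} is not justified as written. You assert ``for a generic choice of $\pi$ distinct elements of $\cup_i\mathcal{B}_i$ are pairwise linearly independent,'' but that is false: every $P\in\hat{\cI}_i$ defined over $V_0$ projects deterministically to a multiple of $z^2$, so the corresponding $b_P$'s are all scalar multiples of $z$, regardless of $\pi$. You then need to collapse all such points to a single vector $z$; and for pairs $(p,z)$ with $p\in\cK_i$, the relaxed EK property is in general vacuous (as just noted, $\sqrt{(zb_p,z^2)}=(z)$). The paper itself flags exactly this issue in its footnote (``we cannot apply \autoref{thm:EK-robust-span} as is as it may be the case that $z$ appears in all three sets'') and states that a modification is needed; your argument does not supply the modification -- it just asserts the fraction of bad pairs is $O(1/|\cT_j|)$, which fails when $|\cT_j|$ is small, precisely the degenerate case you set aside.
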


\begin{proof}[Proof of \autoref{cla:3-i}]
Let $\cI_2 = \{F_i\}_i$ and $\cI_3=\{G_i\}_i$. 
As before we take $V'$ to be the space spanned by the linear functions in a minimal representation of $Q$. Clearly $\dim(V')\leq 4$. Let $z$ be a new variable. Set each basis element of $V'$ to a random multiple of $z$ (as before, we pick the multiples independently, uniformly at random from $[0,1]$). Each $F_i,G_i$ now becomes $z\cdot b_i$ for some nonzero $b_i$. Indeed, if we further set $z=0$ then all linear functions in the representation of $Q$ vanish and hence  $F_i$ and $G_i$ also vanish.\footnote{Here too we use the fact that $Q$ is irreducible and hence the two linear functions that make $F_i$ (or $G_i$) vanish appear in $V'$ (\autoref{cla:irr-quad-span}).} Furthermore, for any $i\neq j$, $b_i$ and $b_j$ are linearly independent (as in \autoref{cla:still-indep}), unless they both equal to multiples of $z$. 

Let $\cI_1$ be the set of quadratics in $\cQ_1$ that after making the restriction become quadratics of the form $z\cdot b$. Clearly $Q_1$ is such a polynomial. 

We next show that the linear functions $\{b_i\}_i\cup \{z\}$, where the $b_i$ are the linear functions coming from $\cI_1\cup\cI_2\cup\cI_3$ as described above, satisfy the usual Sylvester-Gallai condition and conclude by \autoref{thm:bdwy}  that their rank is $O(1)$.

\begin{claim}\label{cla:both-sets}
If some polynomial in $\cI_2$ ($\cI_3$) is projected to $z\cdot b$ where $b$ is linearly independent of $z$ then there is some polynomial in $\cI_3$ ($\cI_2$) that is projected to $z\cdot c$ for some $c$ linearly independent of $z$.
\end{claim}

\begin{proof}
Consider any polynomial $Q'\in \cI_2$ that was projected to $z\cdot b$,  where $b$ is linearly independent of $z$, and let $Q''\in\cI_3$ be in $\sqrt{(Q_1,Q')}$. Assume for a  contradiction that $Q''$ was projected to $z^2$. \autoref{cla:still-indep} implies that if this is the case then all linear functions in a minimal representation of $Q''$ belong to $V'$. 

We thus have that $Q''=Q''(V')$. We can also assume w.l.o.g. that $Q'=Q'(V',b)$ (by completing $V'\cup\{b\}$ to a basis for the entire space of linear functions and projecting the other basis elements to random multiples of $b$). We next show that $Q''\in \sqrt{Q_1}$, which implies $Q''$ is a multiple of $Q_1$ in contradiction. 

We again resort to \autoref{thm:structure}. It is clear that $Q''\not\in\spn\{Q_1,Q'\}$. So we are left with the two other cases.

\begin{enumerate}
\item $Q_1$ and $Q'$ span a square of a linear function: It is not hard to see that in this case we must have (after rescaling) that $Q' = b^2 + \ell(V')\cdot b +A'(V')$. Consider any assignment to $V'$ that makes $Q_1$ vanish. Clearly there is a value to $b$ that also makes $Q'$ vanish for that assignment. Thus $Q''$ also vanishes. Therefore, any assignment that makes $Q_1=0$ also makes $Q''=0$ which is what we wanted to prove.

\item There are two linear functions $v_1,v_2\in V'$ so that $Q_1,Q',Q''\in \sqrt{(v_1,v_2)}$: Denote $Q_1 = v_1\cdot u_1 + v_2 \cdot u_2$, $Q' + v_1 \cdot b_1 + v_2 \cdot b_2$ and $Q'' =v_1\cdot w_1 + v_2 \cdot w_2$, where $w_i,u_i\in V'$. Project $v_1$ and $v_2$ to random multiples of a new variable $y$. Then, our new polynomials are now $Q_1=y\cdot u$, $Q' = y\cdot b'$ and $Q''=y\cdot w$, where $u,w\in V'$ (where we abuse notation and refer to the projection of $V'$ also as $V'$) and, with probability $1$, $b'\not \in V'$.
Consider the assignment $u=b'=0$. It follows that we also get $y\cdot w=0$. However, as $y,w,u\in V'$ and $b'\not\in V'$ it must be the case that $y\cdot w=0$ modulo $u$. Thus, after this projection we get that $Q''\in\sqrt{(Q_1)}$. This implies however that $Q''$ is a multiple of $Q_1$ as it cannot be the case that $Q_1$ was projected to a square (as this would imply that it was only a function of $v_1$ and $v_2$ and hence a reducible polynomial). \autoref{cla:still-indep} implies that this was also the case before the projection, in contradiction.
\end{enumerate}
\end{proof}

We continue with the proof of \autoref{cla:3-i}. \autoref{cla:both-sets} establishes that either all polynomials in $\cI_2\cup \cI_3$ were projected to $z^2$ or that both $\cI_2$ and $\cI_3$ contain polynomials that were projected to quadratics of the form $z\cdot b$ where $b$ is linearly independent of $z$. 

We are now ready to show that the linear functions $\{b_i\}_i\cup \{z\}$, where $b_i$ are the linear functions in $\cI_1\cup\cI_2\cup\cI_3$, satisfy the usual Sylvester-Gallai condition.

Consider any two quadratics $A_2=z\cdot b_2\in\cI_2,A_3=z\cdot b_3\in\cI_3$ so that  neither $b_2$ nor $b_3$ is a multiple of $z$. If $b_2$ and $b_3$ span $z$ then we are done. So assume that $z\not\in\spn\{b_2,b_3\}$. Let $A_1$ vanish when $A_2,A_3$ vanish. Then clearly $z$ divides $A_1$. Thus $A_1=z\cdot b_1$ is in $\cI_1$ and so $b_1$ is in our set. Further, when we set $b_2=b_3=0$ both $A_2,A_3$ vanish and hence also $A_1$ vanishes. Since $z\not \in \spn\{b_1,b_2\}$ this implies that $b_1 \in  \spn\{b_2,b_3\}$ and so in this case $b_2$ and $b_3$ span a third linear function in our set. Note also that by \autoref{cla:still-indep} $b_1$ is not a multiple of $b_2$ nor of $b_3$ as this would imply that $A_1$ and $A_2$ (or $A_3$) are linearly dependent in contradiction to our assumption. 

This argument shows that whenever $b_2$ and $b_3$ are not a multiple of $z$ (and they come from different sets), the set $\{b_i\}_i\cup \{z\}$ contains a nontrivial linear combination of them. In a similar fashion to  \autoref{cor:bdwy} and \autoref{thm:EK-robust-span} we get that the dimension of all those linear functions is $O(1)$.\footnote{We note that we cannot apply \autoref{thm:EK-robust-span} as is as it may be the case that $z$ appears in all three sets. However, it is not hard to see that a small modification of it will capture this case as well.}

As in the proof of \autoref{cla:3} we repeat this argument again for a different random mapping to multiples of $z$ and conclude in the same way that every polynomial in $\cI_2\cup\cI_3$ is a polynomial over some $O(1)$-dimensional space $V$.

This completes the proof of \autoref{cla:3-i}.
\end{proof}



\section{Conclusions and future research}\label{sec:discussion}

In this work we proved analogs of theorems of Sylvester-Gallai and Edelstein-Kelly for quadratic polynomials. These results directly relate to the problem of obtaining deterministic algorithms for testing identities of  $\Sigma^{[3]}\Pi^{[d]}\Sigma\Pi^{[2]}$ circuits. As mentioned in \autoref{sec:intro} in order to obtain PIT algorithms we need even stronger extensions of these results - something in the line of \autoref{con:gupta-general} that was proposed by Gupta \cite{Gupta14}. 


It is quite likely that Theorems~\ref{thm:main-sg-intro} and \ref{thm:main-ek-intro} could be extended to obtain a positive answer to \autoref{con:gupta-general}  for $r=2$ and $k=3$. Indeed, there is an analog of \autoref{thm:structure-intro} that suits the condition of the conjecture (for $r=2$ and $k=3$). Peleg \cite{Peleg19} used this extension of \autoref{thm:structure-intro} to generalize 
\autoref{thm:main-sg-intro} to the case where for every $Q_i$ and $Q_j$ it holds that whenever they vanish the product of the other $Q_k$'s vanishes as well. This is a significant step towards resolving \autoref{con:gupta-general}  (for $r=2$ and $k=3$). 

However, extending our approach to the case of more than $3$ multiplication gates (or more than $3$ sets as in 
\autoref{thm:main-ek-intro}) seems more challenging. Indeed, the structure theorem gets more complicated in the sense that there are many more cases to consider and it seems unlikely that a similar approach will work for ``higher values of $3$''. Similarly, while proving a structural theorem for degree $3$ polynomials is possible, it seems that extending the exact same approach to significantly higher degrees may be less easy. Thus, we believe that a different proof approach may be needed in order to obtain PIT algorithms for $\Sigma^{[O(1)]}\Pi^{[d]}\Sigma\Pi^{[O(1)]}$ circuits.

Another interesting question is, stated vaguely, understanding the conditions under which we get a Sylvester-Gallai kind of behavior. By now many variants of the theorem are known: The original Sylvester-Gallai theorem, the colored version of it (Edelstein-Kelly theorem), robust versions of it (by \cite{barak2013fractional,DSW12}), extensions to subspaces \cite{DBLP:journals/dcg/DvirH16}, $k$-wise dependencies \cite{Hansen65,barak2013fractional}, our results for quadratic polynomials and more. It is an intriguing question whether there is a common generalization of all these cases or some framework that contain all these different results.





\section*{Acknowledgments} 
I would like to thank Shir Peleg for helpful discussions and Ankit Gupta for commenting on an earlier version of the paper. I also thank the anonymous reviewer for their comments.
Part of this work was done while the author was visiting NYU.


\bibliographystyle{amsplain}


\begin{dajauthors}
\begin{authorinfo}[pgom]
  Amir Shpilka\\
  Professor\\
  Tel Aviv University\\
  Tel Aviv, Israel\\
  shpilka\imageat{}tauex\imagedot{}tau\imagedot{}ac\imagedot{}il   \\
  \url{https://www.cs.tau.ac.il/~shpilka}
\end{authorinfo}
\end{dajauthors}

\end{document}